\newcommand{\wh}{\widehat}
\newcommand{\Res}{\mathrm{Res}}
\newcommand{\ot}{\otimes}
\newcommand{\op}{\oplus}
\newcommand{\CL}{\mathcal{L}}
\newcommand{\CA}{\mathcal{A}}
\newcommand{\CR}{\mathcal{R}}
\newcommand{\g}{\mathfrak{g}}
\newcommand{\fgl}{\mathfrak{gl}}
\newcommand{\C}{\mathbb{C}}
\newcommand{\D}{\mathcal{D}}
\newcommand{\N}{\mathbb N}
\newcommand{\Z}{\mathbb Z}
\newcommand{\mk}{\mathbf k}
\newcommand{\mb}{\mathbf}
\newcommand{\Hom}{\mathrm{Hom}}
\newcommand{\nno}{\nonumber}
\newcommand{\pd}[2]{\({#1}^{#2}\frac{\partial}{\partial #1}\)}
\newcommand{\Del}[3]{\Delta_{#1}^{(#2)}(#3)}
\newcommand{\End}{\mathrm{End}}
\newcommand{\U}{\mathcal{U}}
\newcommand{\te}[1]{\textnormal{{#1}}}
\theoremstyle{Theorem}
\theoremstyle{Theorem}
\newtheorem{thm}{Theorem}[section]
\newtheorem{lemt}[thm]{Lemma}
\newtheorem{prpt}[thm]{Proposition}
\newtheorem{thmt}[thm]{Theorem}
\newtheorem{remt}[thm]{Remark}
\newtheorem{dfnt}[thm]{Definition}
\def\({\left(}
\def\){\right)}
\newlength{\dhatheight}
\def \<{{\langle}}
\def \>{{\rangle}}
\newcommand{\vac}{\mathbf{1}}
\newcommand{\E}{{\mathcal{E}}}
\numberwithin{equation}{section}
\title[Quasi vertex Lie algebras and vertex algebras]{A unified construction of vertex algebras from infinite-dimensional Lie algebras}
\author{Fulin Chen$^1$}
\address{School of Mathematical Sciences, Xiamen University,
 Xiamen, China 361005} \email{chenf@xmu.edu.cn}
 \thanks{$^1$Partially supported by China NSF grant (No.11971397)}
 \author{Xiaoling Liao}\address{School of Mathematical Sciences, Xiamen University,
 Xiamen, China 361005} \email{xiaoling@stu.xmu.edu.cn}
 \author{Shaobin Tan$^2$}\address{School of Mathematical Sciences, Xiamen University,
 Xiamen, China 361005} \email{tans@xmu.edu.cn }\thanks{$^2$Partially supported by China NSF grants (No.12131018)}
\author{Qing Wang$^3$}\address{School of Mathematical Sciences, Xiamen University,
 Xiamen, China 361005} \email{qingwang@xmu.edu.cn }\thanks{$^3$Partially supported by
 China NSF grants (Nos.12071385, 12161141001, 11971396) and the Fundamental Research Funds for the Central Universities (No.20720200067)}
\subjclass[2010]{17B65 \& 17B69}
\keywords{Vertex algebra, infinite-dimensional Lie algebra, equivariant coordinated quasi module}
\begin{document}
\bibliographystyle{plain}

\begin{abstract}
In this paper, we give a unified construction of vertex algebras arising from infinite-dimensional Lie algebras, including the affine Kac-Moody algebras, Virasoro algebras, Heisenberg algebras and their  higher rank analogs, orbifolds and  deformations. We
 define a notion of what we call quasi vertex Lie algebra to unify these Lie algebras.
Starting from any (maximal) quasi vertex Lie algebra $\g$, we
construct a corresponding vertex Lie algebra $\g_0$,
and establish a canonical isomorphism between the category of restricted  $\g$-modules and that of
 equivariant $\phi$-coordinated quasi $V_{\g_0}$-modules, where $V_{\g_0}$ is the universal enveloping vertex algebra of $\g_0$.
 This unified all the previous constructions of vertex algebras from infinite-dimensional Lie algebras
 and shed light on the way to associate vertex algebras with Lie algebras.
\end{abstract}
\maketitle
\section{Introduction}
Vertex algebras and their modules are often constructed from restricted modules of infinite-dimensional Lie algebras such as the
(untwisted) affine Kac-Moody algebras, Heisenberg algebras and  Virasoro algebras (see \cite{DL,FZ,Li1}, etc.).
These (affine, Heisenberg and Virasoro) vertex  algebras are the major building blocks in vertex (operator) algebra theory.
Furthermore, their various higher rank analogs, orbifolds and  deformations, including the twisted affine Lie algebras \cite{K},
 (twisted and untwisted) toroidal extended affine Lie algebras \cite{ABFP,B}, quantum torus Lie algebras  \cite{BGK,G-KL},
 $q$-Heisenberg Lie algebras \cite{FR}, Virasoro-like  algebras \cite{LT}, Klein bottle Lie
algebras \cite{JJP,PR}, and $q$-Virasoro  algebras \cite{BC}, are studied extensively both in  mathematics and physics.
A fundamental problem, in the field of vertex algebras first formulated in \cite{Li-adv}, is to associate these Lie algebras with vertex algebras
 in the similar way to that
affine, Heisenberg and Virasoro  algebras are associated  with vertex algebras.

In this paper we first define a notion of what we call quasi vertex Lie algebra to unify
 the affine, Heisenberg and Virasoro algebras, as well as their various higher rank analogs, orbifolds and  deformations.
 Then we give an answer to this fundamental problem by associating
all quasi vertex Lie algebras
with vertex algebras in a unified way.

The notion of quasi vertex Lie algebra can be viewed as a ``non-commutative" generalization of
the vertex Lie algebra introduced by Dong-Li-Mason \cite{DLM} (cf. \cite{K2,P}).
We recall from \cite{DLM} that a vertex Lie algebra is a complex Lie algebra $\mathcal{L}$ together with
 a set $\mathcal{F}\subset \mathcal{L}[[z,z^{-1}]]$ of
generating functions on $\mathcal{L}$ that satisfies some axioms.
The main axiom is that  for any pair $(a(z),b(z))$ in $\mathcal{F}$,
\begin{align}\label{intro:commutator1}
[a(z),b(w)]=\sum_{i,j\ge 0}\frac{1}{i!}\(\(\frac{\partial}{\partial w}\)^jc_{i,j}(w)\)\left(\frac{\partial}{\partial w}\right)^i z^{-1}\delta\left(\frac{w}{z}\right)
\end{align}
for some $c_{i,j}(z)\in \mathcal{F}$. This axiom implies
that the commutator $[a(z),b(w)]$ is local in the sense that there exists a nonnegative integer $k$ such that
\begin{align*}
(z-w)^k [a(z),b(w)]=0.
\end{align*}
It was proved in \cite{DLM} (see also \cite{P}) that   there is   a  vertex algebra structure
on a distinguished  highest weight
$\mathcal{L}$-module $V_{\mathcal{L}}$, called the universal enveloping vertex algebra of $\mathcal{L}$ \cite{P},  such that
any restricted $\mathcal{L}$-module is naturally a $V_{\mathcal{L}}$-module.

We know that affine Lie algebras, Heisenberg algebras and Virasoro algebras are three most important families in the theory of infinite-dimensional Lie algebras, they are also the examples of vertex Lie algebras \cite{DLM}.
And almost all the Lie algebras we studied are related to these three families of Lie algebras, however, most of them
 are not vertex Lie algebras in general.
For example, the commutator of two generating functions of the untwisted toroidal extended affine Lie algebras or Virasoro-like algebras has the expression:  (cf. \cite{CLT,BLP})
\begin{align}\label{intro:commutator3}
\sum_{i,j\ge 0}\frac{1}{i!}\(\(w\frac{\partial}{\partial w}\)^jc_{i,j}(w)\)\left(w\frac{\partial}{\partial w}\right)^i \delta\left(\frac{w}{z}\right),
\end{align}
where $c_{i,j}(w)$ are some generating functions and we note that  degree  derivations other than  partial derivations are involved.
For some other Lie algebras, the  generating functions $a(z)$ and $b(z)$ are even not local.
They satisfies the so-called  quasi locality in the sense  that (cf. \cite{Li-adv})
\begin{align*}
 p(z,w) [a(z),b(w)]=0
\end{align*}
for some polynomial $p(z,w)$ (not necessarily has the form $(z-w)^k$). For example,
the  commutator of two  generating functions on the twisted affine Lie algebras, quantum $2$-torus Lie algebras or $q$-Virasoro algebras can be expressed as the following form:
(cf. \cite{Li3,LTW,GLTW1,GLTW2})
\begin{align}\label{intro:commutator2}
\sum_{m,n\in \Z}\sum_{i,j\geq0}\frac{1}{i!}\(\(\frac{\partial}{\partial w}\)^{j}c_{n,m,i,j}(q^m w)\)\(\frac{\partial}{\partial w}\)^{i}z^{-1}\delta\left(\frac{ q^n w}{ z}\right)\quad (q\in \C^\times),
\end{align}
where $c_{n,m,i,j}(w)$ are some  generating functions, while the commutator on the (nullity $2$) twisted toroidal extended affine Lie algebras,  $q$-Heisenberg Lie algebras or
Klein bottle Lie
algebras has the form: (cf. \cite{CTY,Li-FMC})
\begin{align}\label{intro:commutator4}
\sum_{m,n\in \Z}\sum_{i,j\geq0}\frac{1}{i!}\(\(w\frac{\partial}{\partial w}\)^{j}c_{n,m,i,j}(q^m w)\)\(w\frac{\partial}{\partial w}\)^{i}\delta\left(\frac{ q^n w}{ z}\right)\quad (q\in \C^\times),
\end{align}
where $c_{n,m,i,j}(w)$ are some  generating functions.

Motivated by these  commutator formulas \eqref{intro:commutator1}-\eqref{intro:commutator4}, we introduce the following definition.

\begin{dfnt}\label{def:g}
{\em
 A {\em quasi vertex Lie algebra} is a triple $(\g,\mathcal{A},\epsilon)$ consisting of a complex Lie algebra $\g$,
 a subset $\mathcal{A}$
 of $\g[[z,z^{-1}]]$, and an integer $\epsilon$, subject to  the following two conditions:
 \begin{itemize}
\item  $\g$ is linearly spanned by the coefficients of $a(z)\in \mathcal{A}$.
 \item for every pair  $(a(z),b(z))$ in $\mathcal{A}$, there is a (finite) subset
 \[\{(a_{(\alpha,\beta,i,j)}b)(z)\mid \alpha,\beta\in \C^\times, i,j\in \N\ \text{and all but finitely many $(a_{(\alpha,\beta,i,j)}b)(z)=0$}\}\]
 of $\mathcal{A}$ such that the commutator of $a(z)$ and $b(w)$ has the form:
\begin{equation}\label{[a,b]}\begin{split}
&[a(z),b(w)]=\sum_{\alpha,\beta\in \C^\times}\sum_{i,j\geq0}\frac{1}{i!}\(\(w^\epsilon\frac{\partial}{\partial w}\)^{j}(a_{(\alpha,\beta,i,j)}b)(\beta w)\)\(w^\epsilon\frac{\partial}{\partial w}\)^{i}z^{\epsilon-1}\delta\left(\frac{ \alpha w}{ z}\right).
\end{split}\end{equation}
\end{itemize}
We say that a $\g$-module $W$ is {\em restricted} if for any $a(z)\in \mathcal{A}$ and $w\in W$, $a(z)w\in W((z))$.}
\end{dfnt}

Note that the affine, Heisenberg, Virasoro algebras,
and their various higher rank analogs, orbifolds and  deformations mentioned above are all examples of  quasi vertex Lie algebras. In fact, quasi vertex Lie algebras include all the Lie algebras which have been associated to vertex algebras up to now.
For a quasi vertex Lie algebra $(\g,\mathcal{A},\epsilon)$, we define its associated group $\Gamma$  to be the subgroup of $\C^\times$ generated by
\[\{\alpha,\beta\in \C^\times\mid (a_{(\alpha,\beta,i,j)}b)(z)\ne 0\ \text{for some}\ a(z),b(z)\in \mathcal{A}\
\text{and}\ i,j\in \N\}.\]
We shall often denote the triple $(\g,\mathcal{A},\epsilon)$ simply by $\g$, and  write
\begin{align}
\mathcal{A}=\left\{a(z)=\sum_{m\in\Z}a(m)z^{-m+\epsilon-1}\mid a\in A\right\}\quad \text{($A$ is an index set).}
\end{align}

Let $\g=(\g,\mathcal{A},\epsilon)$ be a given quasi vertex Lie algebra and let $\Gamma$ be the associated group.
The main goal of this paper is to  construct a corresponding vertex algebra $V_{\g^0}$ such that
every restricted $\g$-module admits naturally certain $V_{\g^0}$-module structure.
Our main idea is to construct a sequence of Lie algebras $\g^\zeta$, $\zeta\in \Z$  based on $\g$ such that
\begin{itemize}
\item $\g^0$ is a vertex Lie algebra and so we have a corresponding vertex algebra $V_{\g^0}$;
\item    $\g$ can be ``reconstructed"
as a $\Gamma$-covariant algebra of $\g^\epsilon$, which allows us to associate $\g$  with $V_{\g^0}$ through its restricted modules.
\end{itemize}

More  precisely, for any integer $\zeta$,
let $\bar{\g}^{\zeta}$ denote the nonassociative algebra such that
\begin{itemize}
\item $\bar{\g}^{\zeta}$ admits a  basis
$\{\bar{a}^{\alpha,\zeta}(m)\mid a\in A, \alpha\in\Gamma,m\in\Z\}$;
\item the multiplication on the generating functions of $\bar{\g}^{\zeta}$ is given by
\begin{equation}\begin{split}\label{intro:multiplication}
&[\bar{a}^{\alpha,\zeta}(z),\bar{b}^{\beta,\zeta}(w)]\\
=&\alpha^{\epsilon-1}\sum_{\gamma\in \Gamma}\sum_{i,j\geq 0}\frac{1}{i!}\beta^{(i+j)(\epsilon-1)}\(\(w^\zeta\frac{\partial}{\partial w}\)^{j}\overline{a_{(\alpha\beta^{-1},\gamma,i,j)}b}^{\gamma\beta,\zeta}(w)\)\(w^\zeta\frac{\partial}{\partial w}\)^{i}z^{\zeta-1}\delta\(\frac{w}{z}\),
\end{split}\end{equation}
where $a,b\in A$, $\alpha,\beta\in \Gamma$ and
 $\bar{a}^{\alpha,\zeta}(z)=\sum_{m\in \Z} \bar{a}^{\alpha,\zeta}(m)z^{-m+\zeta-1}$.
 \end{itemize}
We denote by $\bar{\mathcal{A}}^{\zeta}$ the subspace of $\bar{\g}^{\zeta}[[z,z^{-1}]]$ spanned by the (linearly
independent) elements
$\(z^{\zeta}\frac{\partial}{\partial z}\)^{n}\bar{a}^{\alpha,\zeta}(z)$ for $n\in \N, a\in A, \alpha\in \Gamma$, and
define the linear map
\begin{align}\label{intro:psizeta}
\bar{\psi}^{\zeta}:\bar{\mathcal{A}}^{\zeta}\rightarrow \g[[z,z^{-1}]],\quad
\(z^{\zeta}\frac{\partial}{\partial z}\)^{n}\bar{a}^{\alpha,\zeta}(z)\mapsto \(z^{\epsilon}\frac{\partial}{\partial z}\)^{n}a(\alpha z).
\end{align}
Let $\bar{\g}^{\zeta}_0$ be the subspace of $\bar{\g}^{\zeta}$ spanned by all the coefficients
of the generating functions in $\ker \bar{\psi}^{\zeta}$, and
set
\[\g^\zeta=\bar{\g}^{\zeta}/\bar{\g}^{\zeta}_0.\]
For $a\in A, \alpha\in \Gamma$ and $m\in \Z$, we denote by $a^{\alpha,\zeta}(m)$ (resp.\,$a^{\alpha,\zeta}(z)$)  the image of $\bar{a}^{\alpha,\zeta}(m)$ (resp.\,$\bar{a}^{\alpha,\zeta}(z)$) in $\g^\zeta$
(resp.\,$\g^\zeta[[z,z^{-1}]]$).

We say that $\g$ is {\em maximal}
if as a $\C$-vector space,  $\g$ is abstractly spanned by the elements $a(m)$ for $a\in A$, $m\in \Z$, and subject to all (the coefficients of)
the  relations which have the form:
\[ \sum_{i\in I} \mu_i\(z^\epsilon\frac{\partial}{\partial z}\)^{n_i} a_i(\alpha_i z)=0,\]
where $I$ is a finite set, $\mu_i\in \C^\times, n_i\in \N, \alpha_i\in \Gamma$ and $a_i\in A$.

The first main theorem what we call  ``reconstruction theorem" of the paper is as follows, whose proof will be presented in Section 2.

\begin{thmt}\label{thm:main1} Let  $(\g,\CA,\epsilon)$ be a quasi vertex Lie algebra with   the associated group $\Gamma$, and let
$\zeta$ be an integer.
\begin{enumerate}
\item[(I)]
The space $\bar{\g}^\zeta_0$ is a two-sided ideal of the nonassociative algebra $\bar{\g}^\zeta$ and the quotient algebra
$\g^\zeta$ is a Lie algebra.
\item[(II)] When $\zeta=\epsilon$,  define a new multiplication $[\cdot,\cdot]_\Gamma$ on $\g^\epsilon$  by
\begin{align*}
[a^{\alpha,\epsilon}(m), b^{\beta,\epsilon}(n)]_{\Gamma}
=\sum_{\lambda\in \Gamma}\lambda^{-m+\epsilon-1}[a^{\alpha\lambda^{-1},\epsilon}(m), b^{\beta,\epsilon}(n)]
\end{align*}
for $a,b\in A, \alpha,\beta\in \Gamma$ and $m,n\in \Z$.
Then the subspace
\begin{align*}
\g^\epsilon_\Gamma=\te{Span}\{\lambda^{-m+\epsilon-1}a^{\alpha\lambda^{-1},\epsilon}(m)-a^{\alpha,\epsilon}(m)\mid
a\in A,\ \alpha, \lambda\in \Gamma,\ m\in \Z\}
\end{align*}
is a two-sided ideal of this new nonassociative algebra and the quotient algebra
$\g^\epsilon[\Gamma]=\g^\epsilon/\g^\epsilon_\Gamma$ is a Lie algebra.
Furthermore, the linear map
\begin{align*}
\varphi_{\g,\Gamma}:\g^\epsilon[\Gamma]\rightarrow \g,\quad a^{\alpha,\epsilon}(m)+\g^\epsilon_\Gamma\mapsto \alpha^{-m+\epsilon-1}a(m)\quad (a\in A,\alpha\in \Gamma, m\in \Z)
\end{align*}
is a surjective homomorphism, and $\varphi_{\g,\Gamma}$ is injective if and only if $\g$ is maximal.

\end{enumerate}
 \end{thmt}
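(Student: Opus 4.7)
My plan splits along the two parts of the theorem.

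For Part (I), I will show that the multiplication \eqref{intro:multiplication} on $\bar{\g}^\zeta$ descends to a Lie bracket on $\g^\zeta$. The central observation is that \eqref{intro:multiplication} is a ``decoupled'' lift of \eqref{[a,b]}: the scalar $\alpha$ inside $\delta(\alpha w/z)$ in $\g$ has been absorbed into the superscript $\gamma\beta$ of $\overline{a_{(\alpha\beta^{-1},\gamma,i,j)}b}^{\gamma\beta,\zeta}(w)$, leaving only $\delta(w/z)$. To show $\bar{\g}^\zeta_0$ is a two-sided ideal, I take any $F(z)=\sum c_i(z^\zeta\partial_z)^{n_i}\bar{a_i}^{\alpha_i,\zeta}(z)\in\ker\bar{\psi}^\zeta$, so that $\sum c_i(z^\epsilon\partial_z)^{n_i}a_i(\alpha_i z)=0$ in $\g[[z^\pm]]$, expand $[F(z),\bar{b}^{\beta,\zeta}(w)]$ term by term via \eqref{intro:multiplication}, and observe that applying $\bar{\psi}^\zeta$ in the $w$-variable converts it into the bracket of $\sum c_i(z^\epsilon\partial_z)^{n_i}a_i(\alpha_i z)$ with (derivatives of) $b(\beta w)$ in $\g$, which vanishes by hypothesis. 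Skew-symmetry and Jacobi modulo $\bar{\g}^\zeta_0$ are then forced by the corresponding identities in $\g$ via the same $\bar{\psi}^\zeta$-argument.

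For Part (II), I first verify that $\g^\epsilon_\Gamma$ is a two-sided ideal of $(\g^\epsilon,[\cdot,\cdot]_\Gamma)$. The left-ideal property reduces to a one-line reindexing: substituting $\nu=\mu\lambda$ in the definition gives $\mu^{-m+\epsilon-1}[a^{\alpha\mu^{-1},\epsilon}(m),b^{\beta,\epsilon}(n)]_\Gamma=[a^{\alpha,\epsilon}(m),b^{\beta,\epsilon}(n)]_\Gamma$, so every generator of $\g^\epsilon_\Gamma$ is annihilated on the left for the new bracket. The right-ideal property follows by combining this with skew-symmetry of $[\cdot,\cdot]$ on $\g^\epsilon$ from Part (I), which propagates to $[\cdot,\cdot]_\Gamma$ by a short calculation. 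That $\g^\epsilon[\Gamma]$ is a Lie algebra then follows once $\varphi_{\g,\Gamma}$ is identified as a surjective Lie-algebra homomorphism onto $\g$.

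The map $\varphi_{\g,\Gamma}$ is well-defined because, when $\zeta=\epsilon$, the powers of $z$ match in $\bar{\psi}^\epsilon$, so the coefficient-wise rule $\bar{a}^{\alpha,\epsilon}(m)\mapsto\alpha^{-m+\epsilon-1}a(m)$ kills $\bar{\g}^\epsilon_0$ by construction and kills each generator of $\g^\epsilon_\Gamma$ by direct substitution. Surjectivity is immediate from $a(m)=\varphi_{\g,\Gamma}(a^{1,\epsilon}(m))$. For the homomorphism property I would rewrite $[\cdot,\cdot]_\Gamma$ in generating-function form as $\sum_\lambda[a^{\alpha\lambda^{-1},\epsilon}(\lambda z),b^{\beta,\epsilon}(w)]$, combine with \eqref{intro:multiplication} at $\zeta=\epsilon$, and then apply the reparametrization $\mu=\alpha^{-1}\alpha'\beta$ to convert the $\Gamma$-sum into the sum over $\alpha'$ appearing in \eqref{[a,b]} after the rescaling $z\to\alpha z, w\to\beta w$; a term-by-term comparison reproduces $[a(\alpha z),b(\beta w)]$.

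For injectivity versus maximality: using the relations in $\g^\epsilon_\Gamma$, every class in $\g^\epsilon[\Gamma]$ has a representative $\sum_i c_i a_i^{1,\epsilon}(m_i)$; such a class lies in $\ker\varphi_{\g,\Gamma}$ precisely when $\sum_i c_i a_i(m_i)=0$ in $\g$, and it is already zero in $\g^\epsilon[\Gamma]$ precisely when this identity follows from the relations in $\bar{\g}^\epsilon_0$, which by the definition of $\bar{\psi}^\epsilon$ are exactly the ``differential-shift'' relations $\sum_j\mu_j(z^\epsilon\partial_z)^{n_j}a_j(\alpha_j z)=0$. Hence injectivity is equivalent to all relations in $\g$ being of this form, i.e., to maximality. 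The main obstacle throughout is the formal-power-series bookkeeping --- tracking $\delta$-functions, derivatives $(w^\epsilon\partial_w)^i$, and rescalings $z\to\alpha z$ --- especially in the homomorphism computation, where the $\Gamma$-sum from $[\cdot,\cdot]_\Gamma$ must be matched term by term against the summation over $\alpha'$ in \eqref{[a,b]}.
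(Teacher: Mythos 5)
Your plan for Part (I), the ideal property in Part (II), the well-definedness, surjectivity and homomorphism property of $\varphi_{\g,\Gamma}$, and the identification of $\g^\epsilon[\Gamma]$ with the ``abstract'' quotient $\tilde\g/\tilde\g_0$ for the maximality criterion all follow essentially the same route as the paper, which packages the lifted skew-symmetry and Jacobi identities as Lemma~\ref{lem:skew-sym} and \eqref{jacobi-quasi} (extracted by comparing coefficients of the linearly independent delta functions of Lemma~\ref{delta-2}) and phrases your ``reindexing'' via the automorphisms $\sigma_\lambda$ together with the identity $[u,v]_\Gamma=\sum_{\lambda}[\sigma_\lambda(u),v]$.

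However, one step is genuinely broken: you assert that ``$\g^\epsilon[\Gamma]$ is a Lie algebra then follows once $\varphi_{\g,\Gamma}$ is identified as a surjective Lie-algebra homomorphism onto $\g$.'' A surjection of nonassociative algebras onto a Lie algebra does not make the source a Lie algebra unless it is injective: skew-symmetry and the Jacobi identity are only detected modulo the kernel. (Toy example: $B=\C x\oplus\C y$ with $x\cdot x=y$ and all other products zero surjects onto the abelian Lie algebra $B/\C y$, yet $B$ is not a Lie algebra.) Injectivity of $\varphi_{\g,\Gamma}$ is exactly equivalent to maximality of $\g$, which is not assumed in this clause and is in fact the last statement you are trying to prove, so the inference is both invalid and circular. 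You must check skew-symmetry and Jacobi for $[\cdot,\cdot]_\Gamma$ on $\g^\epsilon[\Gamma]$ directly; this is short once you know each $\sigma_\lambda$ is a Lie automorphism of $\g^\epsilon$ (the paper's Lemma~\ref{lem:sigma}): the Jacobi identity for $[\cdot,\cdot]_\Gamma$ holds already on $\g^\epsilon$ by summing the Jacobi identity of $\g^\epsilon$ over pairs $(\sigma_\lambda,\sigma_\mu)$ and reindexing, while skew-symmetry holds modulo $\g^\epsilon_\Gamma$ because $[u,\sigma_\lambda v]=\sigma_\lambda[\sigma_{\lambda^{-1}}u,v]\equiv[\sigma_{\lambda^{-1}}u,v]\pmod{\g^\epsilon_\Gamma}$; the paper obtains both from \cite[Lemma 4.1]{Li3}. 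Relatedly, you never verify that $[\sigma_\lambda(u),v]=0$ for all but finitely many $\lambda\in\Gamma$ (the second assertion of Lemma~\ref{lem:sigma}), without which the sum defining $[\cdot,\cdot]_\Gamma$ need not be finite when $\Gamma$ is infinite.
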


For example, let $\mathfrak{b}$ be a Lie algebra equipped with an  automorphism $\sigma$ of order $T$.
Associated to the pair $(\mathfrak{b},\sigma)$, we have the twisted loop algebra \cite{K}
\[\mathcal{L}(\mathfrak{b},\sigma)=\prod_{k=0}^{T-1} \mathfrak{b}_{(k)}\ot t^k\C[t^T,t^{-T}]
\subset \mathfrak{b}\ot \C[t,t^{-1}],\]
 where $\mathfrak{b}_{(k)}=\{x\in \mathfrak{b}\mid
\sigma(x)=e^{2k\pi\sqrt{-1}/T} x\}$.
For any $\epsilon\in \Z$,  set
\[\mathcal{A}=\left\{a(z)=\sum_{m\in \Z} (a\ot t^{mT+k}) z^{-mT-k+\epsilon-1}\mid a\in \mathfrak{b}_{(k)}, k=0,\dots,T-1\right\}.\]
Then
 $(\mathcal{L}(\mathfrak{b},\sigma),\CA,\epsilon)$ is a maximal quasi vertex Lie algebra with the associated group $\Gamma=\<e^{2\pi\sqrt{-1}/T}\>$.
 Furthermore, we have $\mathcal{L}(\mathfrak{b},\sigma)^\zeta\cong \mathfrak{b}\ot \C[t,t^{-1}]$ for any $\zeta\in \Z$ and $\mathcal{L}(\mathfrak{b},\sigma)^\epsilon[\Gamma]\cong \mathcal{L}(\mathfrak{b},\sigma)$.

Now we recall the notion of the vertex algebra with a group action and its corresponding module (cf. \cite{Li-adv,JKLT,CLTW}).

\begin{dfnt}\label{def:geva}{\em (1) Let $\Gamma$ be a subgroup of $\C^\times$ and $\epsilon\in \Z$.
A {\em $(\Gamma,\epsilon)$-vertex algebra} $(V,Y,{\bm 1},R)$ is a vertex algebra $(V,Y,{\bm 1})$ together with
a group homomorphism
\[R:\Gamma\rightarrow \mathrm{GL}(V),\quad \alpha\mapsto R_\alpha\] such that for $\alpha\in \Gamma$ and $v\in V$,
\begin{align*}
R_\alpha({\bm 1})={\bm 1},\quad R_\alpha Y(v,z) R_\alpha^{-1}=Y(R_\alpha v, \alpha^{1-\epsilon}z).
\end{align*}
(2) Let $(V,Y,{\bm 1},R)$ be  a  $(\Gamma,\epsilon)$-vertex algebra and set
\[\phi_\epsilon(z,w)
=e^{w z^{\epsilon}\frac{d}{dz}}z\in \C((z))[[w]].\]
A  {\em $\Gamma$-equivariant $\phi_\epsilon$-coordinated quasi $V$-module} is a vector space
$W$ equipped with a linear map
$
Y_W^\epsilon(\cdot,z):V\rightarrow \mathrm{Hom}(W,W((z)))
$
satisfying the conditions that
\begin{itemize}
\item  $Y_{W}^\epsilon(\mathbf{1},z)=1_{W}$ (the identity map on $W$),
\item  $Y_W^\epsilon(R_\alpha v,z)=Y_W^\epsilon(v,\alpha^{-1}z)$ for $\alpha\in \Gamma$, $v\in V$,
\item  for $u,v\in V$, there exists $q(z)\in\C[z]$ whose roots lie in $\Gamma$ such that
\begin{eqnarray*}
&&q(z_1/z_2)Y_{W}^\epsilon(u,z_{1})Y_{W}^\epsilon(v,z_{2})\in\Hom(W,W((z_{1},z_{2}))),\\
&&q(\phi_\epsilon(z_{2},z_{0})/z_2) Y_{W}^\epsilon(Y(u,z_{0})v,z_{2})=(q(z_1/z_2) Y_{W}^\epsilon(u,z_{1})Y_{W}^\epsilon(v,z_{2}))\mid_{z_{1}=\phi_\epsilon(z_{2},z_{0})}.
\end{eqnarray*}
\end{itemize}
}
\end{dfnt}

When $\Gamma=\{1\}$ and $\epsilon=0$, a $(\Gamma,\epsilon)$-vertex algebra is a vertex algebra and its
$\Gamma$-equivariant $\phi_\epsilon$-coordinated quasi module is a usual module.
When $\epsilon=0$, the  $\Gamma$-equivariant $\phi_\epsilon$-coordinated quasi module is an equivariant quasi module introduced in \cite{Li-adv,Li3}.
When $\epsilon=1$, the $\Gamma$-equivariant $\phi_\epsilon$-coordinated quasi module was introduced in \cite{Li-JMP}. When $\Gamma=\{1\}$, this notion is the  $\phi_\epsilon$-coordinated module  introduced in \cite{Li-CMP} (cf. \cite{BLP}).

Let $\g=(\g,\CA,\epsilon)$ be a quasi vertex Lie algebra and let $\Gamma$ be the associated group.
 Recall from Theorem \ref{thm:main1} we have a sequence of Lie algebras $\g^\zeta$ for $\zeta\in\Z$. Particularly, we consider the Lie algebra $\g^0$, and set \begin{align}\label{intro:g0+}
\g^0_+=\text{Span}\{a^{\alpha,0}(n)\mid a\in A, \alpha\in \Gamma, n\ge 0\},
\end{align} which is a subalgebra of $\g^0$.
Form the induced $\g^0$-module
\begin{align}\label{intro:vg0}
V_{\g^0}= \U(\g^0)\otimes_{\U(\g^0_+)}\C,
\end{align}
where $\C$ denotes the trivial $\g^0_+$-module.
Set $\bm{1}=1\ot 1$ and $a^{\alpha,0}=a^{\alpha,0}(-1){\bm 1}\in V_{\g^0}$ for $a\in A$ and $\alpha\in \Gamma$.
The following is the second main result of this paper, whose proof will be given in Section 3.

\begin{thmt}\label{thm:main2}
Let $(\g,\mathcal{A},\epsilon)$ be a quasi vertex Lie algebra with $\Gamma$ the associated group.
\begin{enumerate}
\item[(I)] There is a unique $(\Gamma,\epsilon)$-vertex algebra structure on $V_{\g^0}$ such that $\bm{1}$ is the vacuum vector,
\begin{align*}
Y(a^{\alpha,0},z)=a^{\alpha,0}(z)\quad\te{and}\quad R_\lambda(a^{\alpha,0})=a^{\alpha\lambda^{-1},0}\end{align*}
for $a\in A$ and $\alpha,\lambda\in \Gamma$.
\item[(II)] Any restricted $\g$-module $W$ is naturally  a $\Gamma$-equivariant $\phi_\epsilon$-coordinated quasi $V_{\g^0}$-module with
the mapping $Y_W^\epsilon$ uniquely determined by
\begin{align*}
Y_W^{\epsilon}(a^{\alpha,0},z)=a(\alpha z)
\end{align*}
for $a\in A$ and $\alpha\in \Gamma$. Conversely, when $\g$ is maximal, any $\Gamma$-equivariant $\phi_\epsilon$-coordinated quasi $V_{\g^0}$-module
$(W,Y_W^\epsilon)$ is naturally a restricted $\g$-module $W$ with
\begin{align*}
a(z)=Y_W^{\epsilon}(a^{1,0},z)
\end{align*}
for $a\in A$.
\end{enumerate}
\end{thmt}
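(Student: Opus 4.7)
\bigskip

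\noindent\textbf{Proof proposal for Theorem \ref{thm:main2}.}

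The plan for Part (I) is to build $V_{\g^0}$ via the standard universal enveloping vertex algebra of a vertex Lie algebra, and then to install the $\Gamma$-action by transporting a natural automorphism of $\g^0$. First, specializing \eqref{intro:multiplication} to $\zeta=0$ replaces $w^\zeta\frac{\partial}{\partial w}$ by $\frac{\partial}{\partial w}$ and produces a bracket of the shape \eqref{intro:commutator1}, so by Theorem \ref{thm:main1}(I) the Lie algebra $\g^0$ together with the generating family $\{a^{\alpha,0}(z)\mid a\in A,\alpha\in\Gamma\}$ is a vertex Lie algebra in the sense of Dong--Li--Mason. The decomposition $\g^0=\g^0_+\oplus \g^0_-$ arising from the grading by the mode index then gives, by \cite{DLM,P}, a unique vertex algebra structure on the induced module \eqref{intro:vg0} with vacuum $\bm{1}$ and $Y(a^{\alpha,0},z)=a^{\alpha,0}(z)$. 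For the $\Gamma$-action, I would define an action of $\Gamma$ on $\g^0$ by $R_\lambda(a^{\alpha,0}(m))=\lambda^{(\epsilon-1)(m+1)}\,a^{\alpha\lambda^{-1},0}(m)$, check by direct substitution in \eqref{intro:multiplication} (with $\zeta=0$) that each $R_\lambda$ is a Lie algebra automorphism preserving $\g^0_+$, and lift it canonically to $V_{\g^0}$. A short computation at the level of generating series then yields $R_\lambda Y(a^{\alpha,0},z)R_\lambda^{-1}=Y(R_\lambda a^{\alpha,0},\lambda^{1-\epsilon}z)$; since the vertex algebra $V_{\g^0}$ is generated by these vectors, the $(\Gamma,\epsilon)$-covariance axiom follows from the standard fact that the commutation relation with $R_\lambda$ is preserved by the $n$-th product.

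For the forward direction of Part (II), fix a restricted $\g$-module $W$ and consider the family of fields $\mathcal{U}_W=\{a(\alpha z)\mid a\in A,\alpha\in\Gamma\}\subset\Hom(W,W((z)))$. The commutator formula \eqref{[a,b]} for $\g$, rewritten in terms of the rescaled fields $a(\alpha z)$ and $b(\beta z)$, shows that $\mathcal{U}_W$ is $\Gamma$-quasi-compatible in the precise sense needed by the $\phi_\epsilon$-coordinated quasi-module machinery of \cite{Li-adv,Li-JMP,Li-CMP,BLP}. By that machinery one obtains a canonical $(\Gamma,\epsilon)$-vertex algebra $\langle\mathcal{U}_W\rangle$ generated by $\mathcal{U}_W$ under the $\phi_\epsilon$-coordinated normal ordered products, together with a $\Gamma$-equivariant $\phi_\epsilon$-coordinated quasi module structure on $W$ given by $Y_W^\epsilon$. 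The next step is to compare $\langle\mathcal{U}_W\rangle$ with $V_{\g^0}$: the assignment $a^{\alpha,0}\mapsto a(\alpha z)$ extends by universality of $V_{\g^0}$ (as the universal vertex algebra built from the vertex Lie algebra $\g^0$) to a homomorphism of $(\Gamma,\epsilon)$-vertex algebras, provided the defining relations of $\g^0$ hold in $\langle\mathcal{U}_W\rangle$. These relations are exactly the kernel of the map $\bar\psi^{0}$ from \eqref{intro:psizeta} at $\zeta=0$, evaluated on coefficients of the fields $a(\alpha z)$; here the identification is $a(\alpha z)=Y_W^\epsilon(a^{\alpha,0},z)$, and both sides transform the same way under $\partial_z$ thanks to the $\phi_\epsilon$-coordinated $D$-bracket relation $\frac{d}{dz}Y_W^\epsilon(v,z)=z^{-\epsilon}Y_W^\epsilon(Dv,z)$.

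For the converse, when $\g$ is maximal, I would start from a $\Gamma$-equivariant $\phi_\epsilon$-coordinated quasi $V_{\g^0}$-module $(W,Y_W^\epsilon)$, set $a(z)=Y_W^\epsilon(a^{1,0},z)$ for $a\in A$, and read off $a(\alpha z)=Y_W^\epsilon(R_\alpha^{-1}a^{1,0},z)=Y_W^\epsilon(a^{\alpha,0},z)$ using the $\Gamma$-covariance. Extract the commutator of $a(z)$ and $b(w)$ from the $\phi_\epsilon$-coordinated Jacobi identity on $V_{\g^0}$; matching terms on both sides reproduces exactly the bracket \eqref{intro:multiplication} at $\zeta=\epsilon$ followed by the identification in Theorem \ref{thm:main1}(II). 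This verifies that the coefficients $a(m)$ define an action of the Lie algebra $\g^\epsilon[\Gamma]$ on $W$, and the maximality hypothesis tells us via Theorem \ref{thm:main1}(II) that $\g^\epsilon[\Gamma]\cong \g$, so $W$ becomes a restricted $\g$-module.

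The step I expect to be the main obstacle is the identification of $\langle\mathcal{U}_W\rangle$ with a homomorphic image of $V_{\g^0}$ in the forward direction of Part (II). Although the commutator condition \eqref{[a,b]} makes quasi-compatibility nearly immediate, translating it through the change of variables $z\mapsto\phi_\epsilon(z_2,z_0)$ and controlling the kernel of $\bar\psi^{0}$ requires carefully organizing the $(n)$-th products indexed by $\Gamma$ and the twisted derivation $z^\epsilon\partial_z$. The rest of the argument is then a matter of threading the existing $\phi_\epsilon$-coordinated machinery through the bookkeeping provided by Theorem \ref{thm:main1}.
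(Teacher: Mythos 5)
Your proposal is correct in outline and follows essentially the same route as the paper: Part (I) via the Dong--Li--Mason universal enveloping vertex algebra of $\g^0$ together with a lifted $\Gamma$-action (your formula for $R_\lambda$ agrees with the paper's), and Part (II) via the local-system machinery for the forward direction and the equivariant commutator formula plus Theorem \ref{thm:main1}(II) for the converse.

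The one place you are too quick is the assertion that $\g^0$ ``is a vertex Lie algebra in the sense of Dong--Li--Mason'' because its bracket has the shape \eqref{intro:commutator1}. That shape is only one of the axioms; the definition also requires exhibiting a space $U$ and a surjection $\rho:U\otimes\C[t,t^{-1}]\rightarrow\g^0$ with $\ker\rho=\mathrm{Im}\left(d\otimes 1+1\otimes\frac{d}{dt}\right)$, and verifying this kernel condition is where the real content of Part (I) sits (Lemma \ref{lem:thm2-1} in the paper). One must take $U$ to be the quotient of the free $\C[d]$-module on the symbols $a^{\alpha,0}$ by the relations $\ker\Psi$ imported from $\g$ via \eqref{intro:psizeta}; with the free module the kernel condition fails whenever there are nontrivial relations among the series $\left(z^\epsilon\frac{\partial}{\partial z}\right)^n a(\alpha z)$, and showing that the quotient does satisfy it uses precisely the definition of $\g^0$ as $\bar{\g}^0/\bar{\g}^0_0$. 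Relatedly, you define $R_\lambda$ by a formula on the spanning set $a^{\alpha,0}(m)$ of the quotient $\g^0$, so its well-definedness (that the corresponding map on $\bar{\g}^0$ preserves $\bar{\g}^0_0$) needs a check of the same kind (Lemma \ref{lem:barR}). Both gaps are fillable and do not affect the architecture of your argument; the remainder, including the comparison of $\langle\mathcal{U}_W\rangle$ with $V_{\g^0}$ that you correctly flag as the main obstacle, matches the paper's Proposition \ref{prop:equi-mod}.
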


In literatures, the vertex algebras
$V_{\g^0}$ have been constructed from
 Lie algebras $\g$ such as twisted affine Lie algebras, toroidal extended affine Lie algebras, quantum $2$-torus Lie
algebras, $q$-Heisenberg Lie algebras, Virasoro-like algebras, $q$-Virasoro algebras and so on, and different approaches on associating these Lie algebras with vertex algebras are used (see \cite{Li3,Li-FMC,CLT,CTY,LTW,BLP,GLTW1,GLTW2}, etc.).
We emphasize a unified construction of vertex algebras arising from these Lie algebras and then we obtain correspondences between the restricted module categories of Lie algebras and certain quasi-module categories of vertex algebras.

In Section 4, we present five typical examples to show the applications of our main results:
(i) the twisted affine Lie algebras; (ii) the quantum $N+1$-torus Lie algebras; (iii) the $q$-Heisenberg Lie algebras;
(iv) the Virasoro-like algebras; (v) the Klein bottle Lie
algebras.
The examples (i), (ii) and (iii) have quasi vertex Lie algebra structures for any integer $\epsilon$,
 while the Lie algebras (iv) and (v) admit naturally  quasi vertex Lie algebra structures only when $\epsilon=1$.
Moreover, the examples (ii) (with $N\ge 2$) and (v) are new.

Let $\Z$, $\N$, $\C$ and $\C^\times$ be the set of integers, nonnegative integers, complex numbers and nonzero complex numbers,
respectively.  All the Lie algebras in this paper are over the field of complex numbers.
Let $z,w,z_{0},z_{1},z_{2},\dots$ be mutually
commuting independent formal variables.
For a  linear map  $\varphi:U\rightarrow W$ of vector spaces,
we will also write $\varphi$ for the linear map from the space $U[[z,z^{-1}]]$ of $U$-valued formal
Laurent series to $W[[z,z^{-1}]]$
such that  $\mathrm{Res}_z z^{m}\varphi(u(z))= \varphi(\mathrm{Res}_z z^m u(z))$ for  $m\in \Z$ and $u(z)\in U[[z,z^{-1}]]$.

\section{Proof of Theorem \ref{thm:main1}}
This section is devoted to the proof of  Theorem \ref{thm:main1}. Throughout this section, let $(\g,\CA,\epsilon)$ be a quasi vertex Lie algebra, let
$\Gamma$ be the group associated to $(\g,\CA,\epsilon)$, and let
 $\zeta$ be an integer as in  Theorem \ref{thm:main1}.

\subsection{Some lemmas}
We first establish some technical lemmas for later use.
For convenience, in the rest of the paper we will frequently use the following notation:
\begin{align*}
&\Del{ w,\zeta}{i}{\alpha z,\beta w}= \frac{1}{i!} \left(w^\zeta \frac{\partial}{\partial
  w}\right)^i \((\alpha z)^{\zeta-1}\delta\left(\frac{\beta w}{\alpha z}\right)\),\\
 &\Del{ z,\zeta}{i}{\alpha z,\beta w}= \frac{1}{i!} \left(z^\zeta \frac{\partial}{\partial
  z}\right)^i \((\alpha z)^{\zeta-1}\delta\left(\frac{\beta w}{\alpha z}\right)\),
\end{align*}
where $\alpha,\beta\in\C^\times$ and $i\in\N$. The following are some relations among these delta functions.

\begin{lemt}\label{lem:partial-delta}For any $\alpha,\beta\in\C^\times$ and $i\in \N$, we have
\begin{align}\label{eq:partial-delta0}
\Del{w,\zeta}{i}{\beta w,\alpha z}&=\Del{ w,\zeta}{i}{\alpha z,\beta w}=\alpha^{\zeta-1}\Del{ w,\zeta}{i}{ z,\alpha^{-1}\beta w},\\
\label{partial-delta1}\Del{ w,\zeta}{i}{\alpha z,\beta w}&=(-1)^i(\alpha\beta^{-1})^{i(\zeta-1)}\Del{ z,\zeta}{i}{\alpha z,\beta w}.
\end{align}
\end{lemt}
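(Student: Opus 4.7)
Both assertions concern the expression $\Delta(z,w) := (\alpha z)^{\zeta-1}\delta(\beta w/\alpha z)$, differentiated either in $w$ or in $z$; I would handle \eqref{eq:partial-delta0} and \eqref{partial-delta1} separately.

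For \eqref{eq:partial-delta0} it suffices to establish the undifferentiated chain
\[
(\alpha z)^{\zeta-1}\delta(\beta w/\alpha z) \;=\; (\beta w)^{\zeta-1}\delta(\alpha z/\beta w) \;=\; \alpha^{\zeta-1} z^{\zeta-1}\delta(\alpha^{-1}\beta w/z),
\]
and then apply the common operator $\tfrac{1}{i!}(w^\zeta\partial/\partial w)^i$ to the three equal expressions. The first equality combines the standard formal-delta symmetry $(\alpha z)^{-1}\delta(\beta w/\alpha z) = (\beta w)^{-1}\delta(\alpha z/\beta w)$ with the substitution rule $f(\beta w/\alpha z)\delta(\beta w/\alpha z) = f(1)\delta(\beta w/\alpha z)$ applied to $f(x) = x^{\zeta-1}$; the second is a direct rescaling using $\delta(\beta w/\alpha z) = \delta(\alpha^{-1}\beta w/z)$ as formal Laurent series.

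For \eqref{partial-delta1} I would expand $\Delta(z,w) = \sum_{n\in\Z}\alpha^{\zeta-1-n}\beta^n w^n z^{\zeta-1-n}$ and apply the monomial formula $(t^\zeta \partial/\partial t)^i t^n = P_i(n)\, t^{n+i(\zeta-1)}$, where $P_i(n) := \prod_{k=0}^{i-1}(n+k(\zeta-1))$, once with $t=w$ and once with $t=z$. After the index shift $m = n+i(\zeta-1)$ on the $w$-differentiated side, both expressions become bilateral sums over $m$ of the same monomial $w^m z^{(\zeta-1)(i+1)-m}$, and the ratio of their two coefficients reduces to $(-1)^i(\alpha\beta^{-1})^{i(\zeta-1)}$ provided one verifies the combinatorial identity $P_i(m-i(\zeta-1)) = (-1)^i P_i(\zeta-1-m)$. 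The latter is immediate from the reindexing $j = i-k$ in the product defining $P_i$, which extracts a sign from each of the $i$ factors.

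The computations are elementary, so the only real hurdle is the bookkeeping: one must track carefully how the ``scaled derivation'' $t^\zeta\partial/\partial t$ shifts monomial weights by $\zeta-1$ per application, since it is precisely this cumulative shift that produces the $(\alpha\beta^{-1})^{i(\zeta-1)}$ correction. An induction on $i$ using the commutativity $[w^\zeta\partial/\partial w,\,z^\zeta\partial/\partial z]=0$ together with the $i=1$ case (which in turn rests on $x\delta'(x) = \delta'(x) - \delta(x)$ to absorb the stray term arising from differentiating $(\alpha z)^{\zeta-1}$ when $\zeta\neq 1$) is a viable alternative but replaces rather than removes this burden.
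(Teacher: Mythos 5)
Your proof is correct, but for \eqref{partial-delta1} it takes a genuinely different route from the paper's. The paper first proves the case $i=1$ by rewriting $w^\zeta\frac{\partial}{\partial w}\bigl((\alpha z)^{\zeta-1}\delta(\beta w/\alpha z)\bigr)$ via the standard identity $\frac{\partial}{\partial x}\bigl(y^{-1}\delta(x/y)\bigr)=-\frac{\partial}{\partial y}\bigl(y^{-1}\delta(x/y)\bigr)$ together with the substitution property of $\delta$, obtaining a single exchange relation between the derivations $w^\zeta\frac{\partial}{\partial w}$ and $z^\zeta\frac{\partial}{\partial z}$ on this particular series, and then iterates it by a short induction on $i$. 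You instead expand everything as a bilateral Laurent series, use the monomial formula $(t^\zeta\frac{\partial}{\partial t})^it^n=P_i(n)\,t^{n+i(\zeta-1)}$ with $P_i(n)=\prod_{k=0}^{i-1}(n+k(\zeta-1))$, and reduce the claim to the product identity $P_i(m-i(\zeta-1))=(-1)^iP_i(\zeta-1-m)$, which indeed follows from the reindexing $j=i-k$; the exponents of $\alpha$ and $\beta$ also match after your shift $m=n+i(\zeta-1)$, so the computation closes. Your route is more elementary, using nothing beyond the definition of $\delta$, at the cost of heavier bookkeeping; the paper's is shorter once the $i=1$ identity is granted but conceals the same cumulative weight shift by $\zeta-1$ inside the induction step. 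For \eqref{eq:partial-delta0} the paper offers no argument (``straightforward to check''), and your reduction to the undifferentiated chain of equalities followed by applying the common operator $\frac{1}{i!}(w^\zeta\frac{\partial}{\partial w})^i$ is exactly the intended verification.
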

\begin{proof} \eqref{eq:partial-delta0} is straightforward to check.
For \eqref{partial-delta1}, from the fact
\begin{align*}
&\frac{\partial}{\partial  w}\(z^{-1} \delta\left(\frac{ w}{z}\right)\)
=-\frac{\partial}{\partial  z}\(z^{-1} \delta\left(\frac{ w}{z}\right)\),
\end{align*}
it follows that
\begin{align*}
\,& w^{\zeta} \frac{\partial}{\partial  w} \left((\alpha z)^{\zeta-1}\delta\left(\frac{\beta w}{\alpha z}\right)\right)
=\beta(\alpha zw)^{\zeta} \frac{\partial}{\partial \beta w} \((\alpha z)^{-1}\delta\left(\frac{\beta w}{\alpha z}\right)\)\\
=\, &-\beta(\alpha zw)^{\zeta} \frac{\partial}{\partial \alpha z} \left((\alpha z)^{-1}\delta\left(\frac{\beta w}{\alpha z}\right)\right)
=-(\alpha\beta^{-1})^{\zeta-1} z^{\zeta} \frac{\partial}{\partial  z} \left((\alpha z)^{\zeta-1}\delta\left(\frac{\beta w}{\alpha z}\right)\right).
\end{align*}
By an induction on $i$, this implies that
 \begin{align*}
&\Del{ w,\zeta}{i}{\alpha z,\beta w}=\frac{1}{i}w^{\zeta} \frac{\partial}{\partial  w}\Del{ w,\zeta}{i-1}{\alpha z,\beta w}\\
=\,&\frac{1}{i}w^{\zeta}\frac{\partial}{\partial  w}  \((-1)^{i-1}(\alpha\beta^{-1})^{(i-1)(\zeta-1)}\Del{ z,\zeta}{i-1}{\alpha z,\beta w}\)\\
=\,&\frac{1}{i!}(-1)^{i-1}(\alpha\beta^{-1})^{(i-1)(\zeta-1)}\left(z^\zeta \frac{\partial}{\partial
  z}\right)^{i-1}w^\zeta\frac{\partial}{\partial  w} \((\alpha z)^{\zeta-1}\delta\left(\frac{\beta w}{\alpha z}\right)\)\\
  =\,&(-1)^i(\alpha\beta^{-1})^{i(\zeta-1)}\Del{ z,\zeta}{i}{\alpha z,\beta w},
\end{align*}
as desired.
\end{proof}

 We will frequently use the following result without further explanation.

\begin{lemt}\label{delta-2} Let $\lambda_1,\dots,\lambda_r$ be distinct nonzero complex numbers, $k_1,\dots,k_r\in \N$ and
$A_{ij}(w)\in W[[w,w^{-1}]]$, where $W$ is a vector space, $i=1,\dots,r,$ and
$ 0\le j\le k_i$.
   Then
\begin{eqnarray}\label{eq:lemma220}
\sum _{i=1}^r\sum_{j=0}^{k_i} A_{ij}(w)\Del{w,\zeta}{j}{z,\lambda_i w}=0
\end{eqnarray}
if and only if $A_{ij}(w)=0$ for all $i,j$.
\end{lemt}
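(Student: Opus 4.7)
The plan is to reduce the delta-function identity to the classical $\C$-linear independence of the family $\{\ell\mapsto \lambda_{i}^{\ell}\ell^{j}\}$ of functions $\Z\to\C$. The ``if'' direction is immediate, so I focus on the converse. The first step is to expand
\[
z^{\zeta-1}\delta\!\left(\frac{\lambda_{i}w}{z}\right)=\sum_{p\in\Z}(\lambda_{i}w)^{p}z^{\zeta-1-p}
\]
and use the identity $(w^{\zeta}\partial_{w})^{j}w^{p}=P_{j}(p)\,w^{p+j(\zeta-1)}$, where
\[
P_{j}(t):=t(t+(\zeta-1))(t+2(\zeta-1))\cdots(t+(j-1)(\zeta-1))
\]
is a polynomial of degree exactly $j$ (with $P_{0}=1$). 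Extracting the coefficient of $z^{\zeta-1-\ell}$ from \eqref{eq:lemma220} then turns the hypothesis into a family of identities in $W[[w,w^{-1}]]$ indexed by $\ell\in\Z$:
\[
\sum_{i=1}^{r}\lambda_{i}^{\ell}\sum_{j=0}^{k_{i}}\frac{P_{j}(\ell)}{j!}\,w^{\ell+j(\zeta-1)}A_{ij}(w)=0.
\]

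Multiplying by $w^{-\ell}$ and setting $B_{ij}(w):=w^{j(\zeta-1)}A_{ij}(w)$, which vanishes iff $A_{ij}(w)$ does, I reduce to showing that $\sum_{i,j}\lambda_{i}^{\ell}P_{j}(\ell)B_{ij}(w)=0$ for every $\ell\in\Z$ forces each $B_{ij}(w)=0$. Let $K:=\max_{i}k_{i}$. Since $\{P_{0},\dots,P_{K}\}$ is a basis of polynomials in $t$ of degree $\le K$ (their degrees being strictly increasing), an invertible triangular change of basis rewrites $\sum_{j}P_{j}(\ell)B_{ij}(w)=\sum_{j}\ell^{j}C_{ij}(w)$, where the $C_{ij}(w)$ vanish iff the $B_{ij}(w)$ do. So it suffices to prove that $\sum_{i,j}\lambda_{i}^{\ell}\ell^{j}C_{ij}(w)=0$ for all $\ell\in\Z$ implies every $C_{ij}(w)=0$.

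This final step is the crux, but it is classical: the functions $\ell\mapsto \lambda_{i}^{\ell}\ell^{j}$ with distinct $\lambda_{1},\dots,\lambda_{r}\in\C^{\times}$ and $j\in\N$ are $\C$-linearly independent on $\Z$ (proved, e.g., by induction on $r$ using the finite-difference operator, or by uniqueness of partial-fraction decomposition of the associated generating function). To transport this to $W$-valued coefficients, pair with an arbitrary $\phi\in W^{*}$ and conclude $\phi(C_{ij}(w))=0$ for all $\phi$, hence $C_{ij}(w)=0$. No serious obstacle is anticipated; the only points requiring care are the formal-series bookkeeping for the coefficient extraction and the triangular change of basis.
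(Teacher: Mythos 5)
Your proof is correct, but it takes a genuinely different route from the paper's. The paper rewrites each $\left(w^\zeta\frac{\partial}{\partial w}\right)^{j}$ as a combination $\sum_n f_{jn}(w)\left(\frac{\partial}{\partial w}\right)^{n}$ with polynomial coefficients, thereby converting \eqref{eq:lemma220} into a statement about the $\zeta=0$ delta functions $\Del{w,0}{n}{z,\lambda_i w}$; it then invokes \cite[Lemma 2.5]{Li-adv} to kill the resulting coefficients and finishes by a downward induction on $\max\{k_1,\dots,k_r\}$, using that the top coefficient $f_{k_ik_i}(w)=w^{k_i\zeta}$ is an invertible monomial. You instead extract the coefficient of each $z^{\zeta-1-\ell}$ directly, which is legitimate here because for a fixed power of $z$ only the single monomial $w^{\ell+j(\zeta-1)}$ survives from $\Del{w,\zeta}{j}{z,\lambda_i w}$, and then reduce, via the triangular change of basis from $\{P_j/j!\}$ to $\{\ell^j\}$ (valid since $\deg P_j=j$ for every $\zeta$), to the classical linear independence of the functions $\ell\mapsto\lambda_i^{\ell}\ell^{j}$ on $\Z$, transported to $W$-valued coefficients by pairing with functionals. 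In effect you reprove the cited result of \cite{Li-adv} rather than quoting it, so your argument is more self-contained and avoids the induction on $\max k_i$; the paper's argument is shorter on the page because it delegates the combinatorial core to the reference. Both handle the $\zeta$-dependence by the same underlying observation that the passage between $w^\zeta\frac{\partial}{\partial w}$ and $\frac{\partial}{\partial w}$ (respectively between $P_j(\ell)$ and $\ell^j$) is triangular with invertible leading terms, so the two proofs are equally valid; just make sure you state explicitly, as you indicate, that multiplication by the monomials $w^{-\ell}$ and $w^{j(\zeta-1)}$ is injective on $W[[w,w^{-1}]]$, which is the only bookkeeping point of substance.
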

\begin{proof}  We only need to prove that if \eqref{eq:lemma220} holds, then $A_{ij}(w)=0$ for all $i,j$.
For every $s\in\N$, let $f_{s0}(w),f_{s1}(w),\dots,f_{ss}(w)$ be the polynomials (uniquely) determined by
$$\left(w^\zeta\frac{\partial}{\partial w}\right)^{s}=f_{ss}(w)\left(\frac{\partial}{\partial w}\right)^{s}+
\cdots+f_{s1}(w)\frac{\partial}{\partial w}+f_{s0}(w).$$
Then for any $i,j$, we have
\begin{align*}
A_{ij}(w)\Del{w,\zeta}{j}{z,\lambda_i w}
=z^{\zeta}\sum_{n=0}^j g_{ij,n}(w)\Del{w,0}{n}{z,\lambda_i w},
\end{align*}
where
\[g_{ij,n}(w)=\frac{n!}{j!}f_{jn}(w)A_{ij}(w).\]
By multiplying both sides of \eqref{eq:lemma220} with $z^{-\zeta}$, we obtain
$$\sum _{i=1}^r\sum_{n=0}^{k_i}\(\sum_{j=n}^{k_i} g_{ij,n}(w)\) \Del{w,0}{n}{z,\lambda_i w}=0.
$$
In view of \cite[Lemma 2.5]{Li-adv}, this implies that
\[\sum_{j=n}^{k_i} g_{ij,n}(w)=0\] for $1\le i\le r$ and $0\le n\le k_i$.
In particular, by taking $n=k_i$, we have
\[g_{ik_i,k_i}(w)=f_{k_ik_i}(w)A_{ik_i}(w)=0\] for $1\le i\le r$.
This shows that $A_{ik_i}(w)=0$ by noting that $f_{k_ik_i}(w)=w^{k_i\zeta}$.
Then the assertion follows from an induction  on $\max\{k_1,\dots,k_r\}$.
\end{proof}

The following result follows from the skew-symmetry of the Lie algebra $\g$.
\begin{lemt}\label{lem:skew-sym} For any $a,b\in A, \lambda\in \Gamma$ and $k\geq 0$, we have
\begin{equation}\label{skew-sym}\begin{split}
&\sum_{\gamma\in \Gamma}\sum_{j\geq0}\(w^\epsilon\frac{\partial}{\partial w}\)^{j}(a_{(\lambda^{-1},\gamma,k,j)}b)(\gamma w)\\
=\,&-\lambda^{(-k+1)(\epsilon-1)}\sum_{\gamma\in \Gamma}\sum_{j\geq0}\sum_{i=0}^j\frac{1}{i!}(-1)^{i+k}\lambda^{-j(\epsilon-1)}\(w^\epsilon\frac{\partial}{\partial w}\)^{j}(b_{(\lambda,\gamma,i+k,j-i)}a)(\lambda^{-1}\gamma w).
\end{split}\end{equation}
\end{lemt}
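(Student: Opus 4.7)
My plan is to derive \eqref{skew-sym} from the skew-symmetry identity $[a(z),b(w)] = -[b(w),a(z)]$ by expanding both sides via \eqref{[a,b]} and matching coefficients against the linearly independent family from Lemma~\ref{delta-2}.

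First I apply \eqref{[a,b]} to the pair $(b,a)$ and then swap $z\leftrightarrow w$ to obtain
\begin{equation*}
[b(w),a(z)] = \sum_{\alpha',\beta',i',j'}\(\(z^\epsilon\frac{\partial}{\partial z}\)^{j'}(b_{(\alpha',\beta',i',j')}a)(\beta' z)\)\Del{z,\epsilon}{i'}{w,\alpha' z}.
\end{equation*}
To bring this into the same format as the original expression \eqref{[a,b]} for $[a(z),b(w)]$ (whose delta-factors live in the family $\{\Del{w,\epsilon}{i}{z,\alpha w}\}$), I invoke Lemma~\ref{lem:partial-delta}: combining \eqref{eq:partial-delta0} with \eqref{partial-delta1} gives
\begin{equation*}
\Del{z,\epsilon}{i'}{w,\alpha' z} = (-1)^{i'}(\alpha')^{(1-i')(\epsilon-1)}\Del{w,\epsilon}{i'}{z,(\alpha')^{-1}w}.
\end{equation*}

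Next I convert the $z$-dependent coefficient $\(z^\epsilon\frac{\partial}{\partial z}\)^{j'}(b_{(\alpha',\beta',i',j')}a)(\beta' z)$ into a $w$-series. Multiplying a formal Laurent series $f(z)$ into $\Del{w,\epsilon}{0}{z,(\alpha')^{-1}w}$ effects the substitution $f(z)\mapsto f((\alpha')^{-1}w)$; under this change of variable the derivation $z^\epsilon\frac{\partial}{\partial z}$ pulls back to $(\alpha')^{1-\epsilon}w^\epsilon\frac{\partial}{\partial w}$, contributing an overall scalar $(\alpha')^{j'(1-\epsilon)}$. To then distribute the factors $\(w^\epsilon\frac{\partial}{\partial w}\)^{i'}$ hidden inside $\Del{w,\epsilon}{i'}{z,(\alpha')^{-1}w}$ across the substituted coefficient, I apply the Leibniz rule for the derivation $w^\epsilon\frac{\partial}{\partial w}$, producing
\begin{equation*}
\sum_{m=0}^{i'}\frac{(\alpha')^{j'(1-\epsilon)}}{m!}\(w^\epsilon\frac{\partial}{\partial w}\)^{j'+m}(b_{(\alpha',\beta',i',j')}a)((\alpha')^{-1}\beta' w)\,\Del{w,\epsilon}{i'-m}{z,(\alpha')^{-1}w}.
\end{equation*}

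Both sides of $[a(z),b(w)]=-[b(w),a(z)]$ are now written as $w$-series valued linear combinations of the family $\{\Del{w,\epsilon}{k}{z,\lambda^{-1}w}\}_{k\in\N,\lambda\in\Gamma}$, which is linearly independent by Lemma~\ref{delta-2}. Matching the coefficient of $\Del{w,\epsilon}{k}{z,\lambda^{-1}w}$ on both sides---and reindexing via $\lambda=\alpha'$, $i'=k+i$ (so $m=i$), $j=j'+i$, together with renaming $\beta'=\gamma$---yields exactly \eqref{skew-sym}. The main obstacle is the scalar bookkeeping in the middle step: one must combine the factor $(-1)^{i'}(\alpha')^{(1-i')(\epsilon-1)}$ from Lemma~\ref{lem:partial-delta} with the factor $(\alpha')^{j'(1-\epsilon)}$ from the variable change and the $1/m!$ from the Leibniz expansion to recover the precise prefactor $-\lambda^{(-k+1)(\epsilon-1)}(-1)^{k+i}\lambda^{-j(\epsilon-1)}/i!$ on the right-hand side of \eqref{skew-sym}.
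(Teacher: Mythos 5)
Your proposal is correct and follows essentially the same route as the paper: both expand $[a(z),b(w)]=-[b(w),a(z)]$ via \eqref{[a,b]}, use Lemma \ref{lem:partial-delta} to convert the $z$-derivatives of delta functions into $w$-derivatives, perform the substitution on the undifferentiated delta followed by a Leibniz expansion of $\(w^\epsilon\frac{\partial}{\partial w}\)^{i'}$, and then match coefficients against the linearly independent family from Lemma \ref{delta-2}. The only cosmetic difference is that you normalize everything to the family $\Del{w,\epsilon}{k}{z,\lambda^{-1}w}$ while the paper uses $\Del{w,\epsilon}{k}{w,\lambda z}$ (a nonzero scalar multiple of the same functions), and your scalar bookkeeping checks out.
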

\begin{proof}  Recall from  \eqref{[a,b]} and \eqref{eq:partial-delta0} that
\begin{eqnarray*}\begin{split}
[a(z),b(w)]=&\sum_{\lambda,\gamma\in \Gamma}\sum_{j,k\geq0}\(w^\epsilon\frac{\partial}{\partial w}\)^{j}(a_{(\lambda,\gamma,k,j)}b)(\gamma w)\Del{w,\epsilon}{k}{ z,\lambda w}\\
=& \sum_{\lambda,\gamma\in \Gamma}\sum_{j,k\geq0}\(w^\epsilon\frac{\partial}{\partial w}\)^{j}(a_{(\lambda,\gamma,k,j)}b)(\gamma w)\lambda^{\epsilon-1}\Del{w,\epsilon}{k}{ w,\lambda^{-1} z}\\
=& \sum_{\lambda,\gamma\in \Gamma}\sum_{j,k\geq0}\(w^\epsilon\frac{\partial}{\partial w}\)^{j}(a_{(\lambda^{-1},\gamma,k,j)}b)(\gamma w)\lambda^{1-\epsilon}\Del{w,\epsilon}{k}{ w,\lambda z}.
\end{split}\end{eqnarray*}
On the other hand, from \eqref{[a,b]} and  Lemma \ref{lem:partial-delta}, it follows that
\begin{eqnarray*}
&&\quad[b( w),a( z)]=\sum_{\lambda,\gamma\in \Gamma}\sum_{i,j\geq0}\(\(z^\epsilon\frac{\partial}{\partial z}\)^{j}(b_{(\lambda,\gamma,i,j)}a)(\gamma z)\)\Del{z,\epsilon}{i}{ w,\lambda z}\\
&&=\sum_{\lambda,\gamma\in \Gamma}\sum_{i,j\geq0}(-1)^i\lambda^{-i(\epsilon-1)}\(\(z^\epsilon\frac{\partial}{\partial z}\)^{j}(b_{(\lambda,\gamma,i,j)}a)(\gamma z)\)\Del{w,\epsilon}{i}{ w,\lambda z}\\
&&=\sum_{\lambda,\gamma\in \Gamma}\sum_{i,j\geq0}\frac{(-1)^i}{i!}\lambda^{-(i+j)(\epsilon-1)}\(w^\epsilon\frac{\partial}{\partial w}\)^i\(\(\(w^\epsilon\frac{\partial}{\partial w}\)^{j}(b_{(\lambda,\gamma,i,j)}a)(\lambda^{-1}\gamma w)\)\Del{w,\epsilon}{0}{ w,\lambda z}\)\\
&&=\sum_{\lambda,\gamma\in \Gamma}\sum_{i,j\geq0}\sum_{k=0}^{i}\frac{(-1)^i}{(i-k)!}\lambda^{-(i+j)(\epsilon-1)}\(\(w^\epsilon\frac{\partial}{\partial w}\)^{i+j-k}(b_{(\lambda,\gamma,i,j)}a)(\lambda^{-1}\gamma w)\)\Del{w,\epsilon}{k}{ w,\lambda z}\\
&&=\sum_{\lambda,\gamma\in \Gamma}\sum_{i,j,k\geq0}\frac{(-1)^{i+k}}{i!}\lambda^{-(i+j+k)(\epsilon-1)}\(\(w^\epsilon\frac{\partial}{\partial w}\)^{i+j}(b_{(\lambda,\gamma,i+k,j)}a)(\lambda^{-1}\gamma w)\)\Del{w,\epsilon}{k}{ w,\lambda z}\\
&&=\sum_{\lambda,\gamma\in \Gamma}\sum_{j,k\geq0}\sum_{i=0}^j\frac{(-1)^{i+k}}{i!}\lambda^{-(j+k)(\epsilon-1)}\(\(w^\epsilon\frac{\partial}{\partial w}\)^{j}(b_{(\lambda,\gamma,i+k,j-i)}a)(\lambda^{-1}\gamma w)\)\Del{w,\epsilon}{k}{ w,\lambda z}.
\end{eqnarray*}

In view of  Lemma \ref{delta-2} and the skew-symmetry $[a( z),b( w)]=-[b( w),a( z)]$, we obtain
 the equation \eqref{skew-sym}  by comparing the coefficients of $\Del{w,\epsilon}{k}{ w,\lambda z}$ in the above two equations.
\end{proof}

Due to the Jacobi identity of the Lie algebra $\g$, we have the following result.

\begin{lemt}For any $a,b,c\in A$, $\lambda,\eta\in \Gamma$ and $i,k\in\N$, we have

\begin{equation}\label{jacobi-quasi}\begin{split}
&\sum_{\substack{\xi,\gamma\in \Gamma\\l\geq0}}\sum_{s=0}^i\sum_{j=0}^l\binom{j+s}{s}\frac{i!}{(i-s)!}\xi^{(i+l-s-j)(\epsilon-1)}\(z^\epsilon\frac{\partial}{\partial z}\)^{l}\(a_{(\eta\xi^{-1},\gamma,i-s,l-j)}(b_{(\lambda,\xi,k,j+s)}c)\)(\gamma\xi z)\\
=&\sum_{\substack{\xi,\gamma\in \Gamma\\l\geq0}}\sum_{s=0}^i\sum_{j=0}^{k+s}\binom{i}{s}\frac{(-1)^{j}(k+s)!}{(k+s-j)!}\xi^{\epsilon-1}\lambda^{(i+j-s)(\epsilon-1)}\(z^\epsilon\frac{\partial}{\partial z}\)^{l}\((a_{(\eta\lambda^{-1},\xi,i-s,j)}b)_{(\lambda\xi,\gamma,k+s-j,l)}c\)(\gamma z)\\
 &+\sum_{\substack{\xi,\gamma\in \Gamma\\l\geq0}}\sum_{s=0}^k\sum_{j=0}^l\binom{j+s}{s}\frac{k!}{(k-s)!}\xi^{(k+l-s-j)(\epsilon-1)}\(z^\epsilon\frac{\partial}{\partial z}\)^{l}\(b_{(\lambda\xi^{-1},\gamma,k-s,l-j)}(a_{(\eta,\xi,i,j+s)}c)\)(\gamma\xi z).
\end{split}\end{equation}
\end{lemt}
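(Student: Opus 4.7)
The plan is to derive \eqref{jacobi-quasi} as the coefficient-extraction form of the Jacobi identity
\begin{equation*}
[a(z_1),[b(z_2),c(w)]]=[[a(z_1),b(z_2)],c(w)]+[b(z_2),[a(z_1),c(w)]]
\end{equation*}
in $\g[[z_1^{\pm1},z_2^{\pm1},w^{\pm1}]]$, after expanding every commutator using \eqref{[a,b]} twice and then comparing coefficients of the independent delta-function products $\Del{w,\epsilon}{i}{z_1,\eta w}\cdot\Del{w,\epsilon}{k}{z_2,\lambda w}$ by means of Lemma \ref{delta-2} (applied separately in $z_1$ and $z_2$). With this reading, \eqref{jacobi-quasi} is exactly the coefficient identity of the above product with free indices $i,k\in\N$ and $\eta,\lambda\in\Gamma$.

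For the left-hand side $[a(z_1),[b(z_2),c(w)]]$, I would first apply \eqref{[a,b]} to the inner commutator to obtain sums $((w^\epsilon\partial_w)^{M}(b_{(\lambda,\xi,K,M)}c)(\xi w))\,\Del{w,\epsilon}{K}{z_2,\lambda w}$. Applying \eqref{[a,b]} again to $[a(z_1),(b_{(\lambda,\xi,K,M)}c)(\xi w)]$, after the relabeling $\eta\mapsto\eta\xi^{-1}$ absorbing the dilation in the argument and using the identity $(\xi w)^\epsilon\partial_{\xi w}=\xi^{\epsilon-1}w^\epsilon\partial_w$, produces the characteristic power $\xi^{(I+J)(\epsilon-1)}$ and the nested generating function $a_{(\eta\xi^{-1},\gamma,I,J)}(b_{(\lambda,\xi,K,M)}c)(\gamma\xi w)$. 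The outer operator $(w^\epsilon\partial_w)^{M}$ then acts on a product of this inner part and $\Del{w,\epsilon}{I}{z_1,\eta w}$; the Leibniz rule for the derivation $w^\epsilon\partial_w$ introduces the binomial $\binom{M}{s}$ together with $(w^\epsilon\partial_w)^{s}\Del{w,\epsilon}{I}{z_1,\eta w}=\tfrac{(I+s)!}{I!}\Del{w,\epsilon}{I+s}{z_1,\eta w}$. Setting $I+s=i$, $K=k$, $J=l-j$ and $M=j+s$ recovers the left-hand side of \eqref{jacobi-quasi} as the coefficient of $\Del{w,\epsilon}{i}{z_1,\eta w}\cdot\Del{w,\epsilon}{k}{z_2,\lambda w}$. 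The two summands on the right are treated analogously: for $[[a(z_1),b(z_2)],c(w)]$ the two applications of \eqref{[a,b]} together with the scaling identity $\Del{w,\epsilon}{K'}{\sigma z_2,\lambda' w}=\sigma^{\epsilon-1}\Del{w,\epsilon}{K'}{z_2,\lambda'\sigma^{-1}w}$ from Lemma \ref{lem:partial-delta} and the transfer identity \eqref{partial-delta1} between $z_2$- and $w$-derivatives account for the sign $(-1)^j$ and the falling factorial $(k+s)!/(k+s-j)!$ in the first right-hand sum of \eqref{jacobi-quasi}, while $[b(z_2),[a(z_1),c(w)]]$ is structurally identical to the left-hand-side computation with $(a,z_1)\leftrightarrow(b,z_2)$ and produces the third sum. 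Lemma \ref{delta-2} applied separately in $z_1$ and $z_2$ then forces the coefficient identity \eqref{jacobi-quasi}.

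The principal obstacle is purely combinatorial: tracking four summation variables on each side together with the chain-rule factors $\eta^{\cdot(\epsilon-1)},\xi^{\cdot(\epsilon-1)},\lambda^{\cdot(\epsilon-1)}$ generated each time $w^\epsilon\partial_w$ is pushed past a dilated argument $\gamma\xi w$, and aligning the relabelings of group parameters so that the six sums produced by the three double commutators collapse into the three uniform sums displayed in \eqref{jacobi-quasi}. No new conceptual ingredient beyond \eqref{[a,b]}, Lemma \ref{lem:partial-delta} and Lemma \ref{delta-2} is needed; skew-symmetry (Lemma \ref{lem:skew-sym}) can be used as a convenience to normalize the third sum, but the cleanest route is direct expansion followed by coefficient matching.
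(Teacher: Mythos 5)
Your proposal is correct and follows essentially the same route as the paper: expand both sides of the Jacobi identity for $a(z_1),b(z_2),c(z_3)$ twice via \eqref{[a,b]}, convert and push the $w^\epsilon\frac{\partial}{\partial w}$ operators using Lemma \ref{lem:partial-delta} and the Leibniz rule, re-index, and extract coefficients of the products $\Del{z_3,\epsilon}{i}{z_1,\eta z_3}\Del{z_3,\epsilon}{k}{z_2,\lambda z_3}$, with Lemma \ref{delta-2} justifying the comparison. Your index substitutions ($I+s=i$, $M=j+s$, $J=l-j$) and the provenance of the factors $\binom{j+s}{s}$, $\frac{i!}{(i-s)!}$, $(-1)^j$ and the powers of $\xi,\lambda$ match the paper's computation.
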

\begin{proof} We will prove the lemma by comparing the two-sides of the Jacobi identity
\begin{equation*}\begin{split}
&[a( z_1),[b(z_2),c(z_3)]]=[[a(z_1),b(z_2)],c(z_3)]+[b(z_2),[a(z_1),c(z_3)]]\quad (a,b,c\in A).
\end{split}\end{equation*}
By \eqref{[a,b]} and  Lemma \ref{lem:partial-delta}, we have
\begin{eqnarray*}
&&\quad[[a( z_1),b(z_2)],c(z_3)]\\
&&=\sum_{\eta,\xi\in \Gamma}\sum_{i,j\geq0}\(\(z_2^\epsilon\frac{\partial}{\partial z_2}\)^{j}[(a_{(\eta,\xi,i,j)}b)(\xi z_2),c(z_3)]\)\Del{z_2,\epsilon}{i}{ z_1,\eta z_2}\\
&&=\sum_{\substack{\eta,\xi,\lambda,\gamma\in \Gamma\\i,j,k,l\geq0}}\(\(z_3^\epsilon\frac{\partial}{\partial z_3}\)^{l}\((a_{(\eta,\xi,i,j)}b)_{(\lambda,\gamma,k,l)}c\)(\gamma z_3)\)\\
&&\quad\(\(z_2^\epsilon\frac{\partial}{\partial z_2}\)^{j}\Del{z_3,\epsilon}{k}{\xi z_2,\lambda z_3}\)\Del{z_2,\epsilon}{i}{ z_1,\eta z_2}\\
&&=\sum_{\substack{\eta,\xi,\lambda,\gamma\in \Gamma\\i,j,k,l\geq0}}(-1)^j\frac{(k+j)!}{k!}(\lambda\xi^{-1})^{j(\epsilon-1)}\(\(z_3^\epsilon\frac{\partial}{\partial z_3}\)^{l}\((a_{(\eta,\xi,i,j)}b)_{(\lambda,\gamma,k,l)}c\)(\gamma z_3)\)\\
&&\quad\Del{z_3,\epsilon}{k+j}{\xi z_2,\lambda z_3}\Del{z_2,\epsilon}{i}{ z_1,\eta z_2}\\
&&=\sum_{\substack{\eta,\xi,\lambda,\gamma\in \Gamma\\i,j,k,l\geq0}}\frac{(-1)^{i+j}}{i!k!}\eta^{-i(\epsilon-1)}(\lambda\xi^{-1})^{j(\epsilon-1)}\(\(z_3^\epsilon\frac{\partial}{\partial z_3}\)^{l}\((a_{(\eta,\xi,i,j)}b)_{(\lambda,\gamma,k,l)}c\)(\gamma z_3)\)\\
&&\quad\(z_3^\epsilon\frac{\partial}{\partial z_3}\)^{k+j}\(z_1^\epsilon\frac{\partial}{\partial z_1}\)^i\(\Del{z_3,\epsilon}{0}{\xi z_2,\lambda z_3}\Del{z_1,\epsilon}{0}{ z_1,\eta\lambda\xi^{-1} z_3}\)\\
&&=\sum_{\substack{\eta,\xi,\lambda,\gamma\in \Gamma\\i,j,k,l\geq0}}\sum_{s=0}^{k+j}(-1)^{j}\binom{i+s}{s}\frac{(k+j)!}{k!}(\lambda\xi^{-1})^{(i+j)(\epsilon-1)}\cdot\\
&&\quad\(\(z_3^\epsilon\frac{\partial}{\partial z_3}\)^{l}\((a_{(\eta,\xi,i,j)}b)_{(\lambda,\gamma,k,l)}c\)(\gamma z_3)\)
\Del{z_3,\epsilon}{k+j-s}{\xi z_2,\lambda z_3}\Del{z_3,\epsilon}{i+s}{ z_1,\eta\lambda\xi^{-1} z_3}\\
&&=\sum_{\substack{\eta,\xi,\lambda,\gamma\in \Gamma\\i,k,l\geq0}}\sum_{j=0}^k\sum_{s=0}^{k}(-1)^{j}\binom{i+s}{s}\frac{k!}{(k-j)!}(\lambda\xi^{-1})^{(i+j)(\epsilon-1)}\cdot\\
&&\quad\(\(z_3^\epsilon\frac{\partial}{\partial z_3}\)^{l}\((a_{(\eta,\xi,i,j)}b)_{(\lambda,\gamma,k-j,l)}c\)(\gamma z_3)\)
\Del{z_3,\epsilon}{k-s}{\xi z_2,\lambda z_3}\Del{z_3,\epsilon}{i+s}{ z_1,\eta\lambda\xi^{-1} z_3}\\
&&=\sum_{\substack{\eta,\xi,\lambda,\gamma\in \Gamma\\i,k,l,s\geq0}}\sum_{j=0}^{k+s}\binom{i+s}{s}\frac{(-1)^{j}(k+s)!}{(k+s-j)!}(\lambda\xi^{-1})^{(i+j)(\epsilon-1)}\cdot\\
&&\quad\(\(z_3^\epsilon\frac{\partial}{\partial z_3}\)^{l}\((a_{(\eta,\xi,i,j)}b)_{(\lambda,\gamma,k+s-j,l)}c\)(\gamma z_3)\)
\Del{z_3,\epsilon}{k}{\xi z_2,\lambda z_3}\Del{z_3,\epsilon}{i+s}{ z_1,\eta\lambda\xi^{-1} z_3}\\
&&=\sum_{\substack{\eta,\xi,\lambda,\gamma\in \Gamma\\i,k,l\geq0}}\sum_{s=0}^i\sum_{j=0}^{k+s}\binom{i}{s}\frac{(-1)^{j}(k+s)!}{(k+s-j)!}(\lambda\xi^{-1})^{(i+j-s)(\epsilon-1)}\cdot\\
&&\quad\(\(z_3^\epsilon\frac{\partial}{\partial z_3}\)^{l}\((a_{(\eta,\xi,i-s,j)}b)_{(\lambda,\gamma,k+s-j,l)}c\)(\gamma z_3)\)
 \Del{z_3,\epsilon}{k}{\xi z_2,\lambda z_3}\Del{z_3,\epsilon}{i}{ z_1,\eta\lambda\xi^{-1} z_3}\\
 &&=\sum_{\substack{\eta,\xi,\lambda,\gamma\in \Gamma\\i,k,l\geq0}}\sum_{s=0}^i\sum_{j=0}^{k+s}\binom{i}{s}\frac{(-1)^{j}(k+s)!}{(k+s-j)!}\xi^{\epsilon-1}\lambda^{(i+j-s)(\epsilon-1)}\cdot\\
&&\quad\(\(z_3^\epsilon\frac{\partial}{\partial z_3}\)^{l}\((a_{(\eta\lambda^{-1},\xi,i-s,j)}b)_{(\lambda\xi,\gamma,k+s-j,l)}c\)(\gamma z_3)\)
 \Del{z_3,\epsilon}{k}{ z_2,\lambda z_3}\Del{z_3,\epsilon}{i}{ z_1,\eta z_3}.
\end{eqnarray*}

On the other hand, from  \eqref{[a,b]} and  \eqref{eq:partial-delta0} we have
\begin{eqnarray*}
&&\quad[a(z_1),[b(z_2),c(z_3)]]\\
&&=\sum_{\eta,\xi\in \Gamma}\sum_{i,j\geq0}\(\(z_3^\epsilon\frac{\partial}{\partial z_3}\)^{j}[a( z_1),(b_{(\eta,\xi,i,j)}c)(\xi z_3)]\)\Del{z_3,\epsilon}{i}{ z_2,\eta z_3}\\
&&=\sum_{\substack{\eta,\xi,\lambda,\gamma\in \Gamma\\i,j,k,l\geq0}}\xi^{(k+l)(\epsilon-1)}\(z_3^\epsilon\frac{\partial}{\partial z_3}\)^{j}\Bigg(\bigg(\(z_3^\epsilon\frac{\partial}{\partial z_3}\)^l\(a_{(\lambda,\gamma,k,l)}(b_{(\eta,\xi,i,j)}c)\)(\gamma\xi z_3)\bigg)\\
&&\quad\quad\quad\Del{z_3,\epsilon}{k}{ z_1,\lambda \xi z_3}\Bigg)\Del{z_3,\epsilon}{i}{ z_2,\eta z_3}\\
&&=\sum_{\substack{\eta,\xi,\lambda,\gamma\in \Gamma\\i,j,k,l\geq0}}\sum_{s=0}^j\binom{j}{s}\frac{(k+s)!}{k!}\xi^{(k+l)(\epsilon-1)}\(\(z_3^\epsilon\frac{\partial}{\partial z_3}\)^{l+j-s}\(a_{(\lambda,\gamma,k,l)}(b_{(\eta,\xi,i,j)}c)\)(\gamma\xi z_3)\)\\
&&\quad\Del{z_3,\epsilon}{k+s}{ z_1,\lambda \xi z_3}\Del{z_3,\epsilon}{i}{ z_2,\eta z_3}\\
&&=\sum_{\substack{\eta,\xi,\lambda,\gamma\in \Gamma\\i,j,k,l,s\geq0}}\binom{j+s}{s}\frac{(k+s)!}{k!}\xi^{(k+l)(\epsilon-1)}\(\(z_3^\epsilon\frac{\partial}{\partial z_3}\)^{l+j}\(a_{(\lambda,\gamma,k,l)}(b_{(\eta,\xi,i,j+s)}c)\)(\gamma\xi z_3)\)\\
&&\quad\Del{z_3,\epsilon}{k+s}{ z_1,\lambda \xi z_3}\Del{z_3,\epsilon}{i}{ z_2,\eta z_3}\\
&&=\sum_{\substack{\eta,\xi,\lambda,\gamma\in \Gamma\\i,k,l\geq0}}\sum_{s=0}^k\sum_{j=0}^l\binom{j+s}{s}\frac{k!}{(k-s)!}\xi^{(k+l-s-j)(\epsilon-1)}\cdot\\
&&\quad\(\(z_3^\epsilon\frac{\partial}{\partial z_3}\)^{l}\(a_{(\lambda,\gamma,k-s,l-j)}(b_{(\eta,\xi,i,j+s)}c)\)(\gamma\xi z_3)\)\Del{z_3,\epsilon}{k}{ z_1,\lambda \xi z_3}\Del{z_3,\epsilon}{i}{ z_2,\eta z_3}\\
&&=\sum_{\substack{\eta,\xi,\lambda,\gamma\in \Gamma\\i,k,l\geq0}}\sum_{s=0}^k\sum_{j=0}^l\binom{j+s}{s}\frac{k!}{(k-s)!}\xi^{(k+l-s-j)(\epsilon-1)}\cdot\\
&&\quad\(\(z_3^\epsilon\frac{\partial}{\partial z_3}\)^{l}\(a_{(\lambda\xi^{-1},\gamma,k-s,l-j)}(b_{(\eta,\xi,i,j+s)}c)\)(\gamma\xi z_3)\)\Del{z_3,\epsilon}{k}{ z_1,\lambda  z_3}\Del{z_3,\epsilon}{i}{ z_2,\eta z_3}\\
&&=\sum_{\substack{\eta,\xi,\lambda,\gamma\in \Gamma\\i,k,l\geq0}}\sum_{s=0}^i\sum_{j=0}^l\binom{j+s}{s}\frac{i!}{(i-s)!}\xi^{(i+l-s-j)(\epsilon-1)}\cdot\\
&&\quad\(\(z_3^\epsilon\frac{\partial}{\partial z_3}\)^{l}\(a_{(\eta\xi^{-1},\gamma,i-s,l-j)}(b_{(\lambda,\xi,k,j+s)}c)\)(\gamma\xi z_3)\)\Del{z_3,\epsilon}{i}{ z_1,\eta  z_3}\Del{z_3,\epsilon}{k}{ z_2,\lambda z_3}.
\end{eqnarray*}

Similarly, we have
\begin{eqnarray*}
&&\quad[b(z_2),[a(z_1),c(z_3)]]\\
&&=\sum_{\substack{\eta,\xi,\lambda,\gamma\in \Gamma\\i,k,l\geq0}}\sum_{s=0}^k\sum_{j=0}^l\binom{j+s}{s}\frac{k!}{(k-s)!}\xi^{(k+l-s-j)(\epsilon-1)}\cdot\\
&&\quad\(\(z_3^\epsilon\frac{\partial}{\partial z_3}\)^{l}\(b_{(\lambda\xi^{-1},\gamma,k-s,l-j)}(a_{(\eta,\xi,i,j+s)}c)\)(\gamma\xi z_3)\)
\Del{z_3,\epsilon}{k}{ z_2,\lambda z_3}\Del{z_3,\epsilon}{i}{ z_1,\eta z_3}.
\end{eqnarray*}

Comparing the coefficients of  $\Del{z_3,\epsilon}{k}{ z_2,\lambda z_3}\Del{z_3,\epsilon}{i}{ z_1,\eta z_3}$ in the above three identities, we obtain
 \eqref{jacobi-quasi}.
\end{proof}

\subsection{Proof of Theorem \ref{thm:main1} (I)} For the first part of Theorem \ref{thm:main1},
recall  that
\[\bar{\g}^\zeta=\bigoplus_{a\in A, \alpha\in \Gamma, m\in \Z}\C \bar{a}^{\alpha,\zeta}(m)\]
is a nonassociative algebra with the multiplication given by \eqref{intro:multiplication}.
Recall also that $\bar{\g}^\zeta_0$ is the subspace of $\bar{\g}^\zeta$ spanned by the
coefficients of the generating functions in $\ker \bar{\psi}^\zeta$ (see \eqref{intro:psizeta}).

\begin{lemt}\label{lem:ideal} $\bar{\g}_0^\zeta$ is a two-sided ideal of the nonassociative algebra $\bar{\g}^\zeta$.
\end{lemt}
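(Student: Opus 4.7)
The plan is to show that for every $f(z)\in\ker\bar{\psi}^\zeta$ and every generator $\bar b^{\beta,\zeta}(n)$ of $\bar{\g}^\zeta$, the coefficients (in $z$ and $w$) of both $[f(z),\bar b^{\beta,\zeta}(w)]$ and $[\bar b^{\beta,\zeta}(w),f(z)]$, viewed as $\bar{\g}^\zeta$-valued two-variable series, lie in $\bar{\g}_0^\zeta$. Since $\bar{\g}_0^\zeta$ is by definition spanned by coefficients of elements of $\ker\bar{\psi}^\zeta$ and $\bar{\g}^\zeta$ is linearly spanned by the $\bar b^{\beta,\zeta}(n)$, this will suffice.

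Writing $f(z)=\sum_i c_i(z^\zeta\partial/\partial z)^{n_i}\bar{a_i}^{\alpha_i,\zeta}(z)$, so that $\sum_i c_i(z^\epsilon\partial/\partial z)^{n_i}a_i(\alpha_i z)=0$ in $\g[[z,z^{-1}]]$, the first step is the delta-function identity $(z^\zeta\partial_z)(z^{\zeta-1}\delta(w/z))=-(w^\zeta\partial_w)(z^{\zeta-1}\delta(w/z))$, verified directly by expansion, which by commutativity of $z^\zeta\partial_z$ and $w^\zeta\partial_w$ iterates to $(z^\zeta\partial_z)^n(z^{\zeta-1}\delta(w/z))=(-1)^n(w^\zeta\partial_w)^n(z^{\zeta-1}\delta(w/z))$. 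Substituting this into \eqref{intro:multiplication} and reindexing $k=r+n_i$ where $r$ is the original summation index for $(w^\zeta\partial_w)^r$, I would bring the bracket into the canonical form
\[
[f(z),\bar b^{\beta,\zeta}(w)]=\sum_{k\geq 0}H_k(w)\,\Del{w,\zeta}{k}{z,w},
\]
where each $H_k(w)$ is a finite linear combination, with scalar coefficients built from $c_i(-1)^{n_i}\alpha_i^{\epsilon-1}\beta^{(\cdot)(\epsilon-1)}/(k-n_i)!$, of series $(w^\zeta\partial_w)^s\overline{(a_i)_{(\alpha_i\beta^{-1},\gamma,k-n_i,s)}b}^{\gamma\beta,\zeta}(w)\in\bar{\mathcal A}^\zeta$.

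The heart of the argument is to show $H_k(w)\in\ker\bar{\psi}^\zeta$ for every $k$. I would compute in parallel the $\g$-bracket $[\bar{\psi}^\zeta(f(z)),b(\beta w)]=[0,b(\beta w)]=0$ using \eqref{[a,b]} after the substitution $z\mapsto\alpha_i z$, $w\mapsto\beta w$, together with the more general identity $(z^\epsilon\partial_z)(z^{\epsilon-1}\delta(\lambda w/z))=-\lambda^{\epsilon-1}(w^\epsilon\partial_w)(z^{\epsilon-1}\delta(\lambda w/z))$. Writing $\mu=\alpha_i^{-1}\beta\lambda$ and grouping by $\lambda\in\Gamma$ and by $k=r+n_i$, the vanishing becomes $0=\sum_{\lambda\in\Gamma}\sum_{k\geq 0}G_{k,\lambda}(w)\Del{w,\epsilon}{k}{z,\lambda w}$ in $\g[[z^{\pm},w^{\pm}]]$. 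Lemma~\ref{delta-2} then forces $G_{k,\lambda}(w)=0$ for all $k$ and all $\lambda\in\Gamma$. The critical point is that the $\lambda=1$ component $G_{k,1}(w)$ is exactly $\bar{\psi}^\zeta(H_k(w))$, so $H_k(w)\in\ker\bar{\psi}^\zeta$. Hence every coefficient in $z,w$ of $H_k(w)\Del{w,\zeta}{k}{z,w}$ is a $\C$-linear combination of coefficients of $H_k(w)$, and therefore lies in $\bar{\g}_0^\zeta$. The case $[\bar b^{\beta,\zeta}(w),f(z)]$ is treated by the parallel computation, using \eqref{intro:multiplication} with arguments swapped, the Leibniz rule for the iterated derivation $(z^\zeta\partial_z)^{n_i}$, and the $\g$-identity $[b(\beta w),\bar{\psi}^\zeta(f(z))]=0$.

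The main obstacle is purely combinatorial: tracking the scalar prefactors through the two reindexings so that $G_{k,1}(w)$ on the $\g$-side matches $\bar{\psi}^\zeta(H_k(w))$ term-for-term. The design of \eqref{intro:multiplication} --- absorbing the $\alpha,\beta$ factors into the structure-constant index $(\alpha\beta^{-1},\gamma,i,j)$ and into the superscript $\gamma\beta$ --- is precisely what makes this matching possible and singles out the $\lambda=1$ component of the $\g$-level identity as the relevant one; once this structural observation is in hand, the verification itself is routine.
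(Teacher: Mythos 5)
Your proposal is correct and follows essentially the same route as the paper: both compute $[u(z),\bar b^{\beta,\zeta}(w)]$ in $\bar\g^\zeta$ and $[\bar\psi^\zeta(u(z)),b(\beta w)]=0$ in $\g$ in parallel, normalize via the identity $\Del{w,\zeta}{i}{\alpha z,\beta w}=(-1)^i(\alpha\beta^{-1})^{i(\zeta-1)}\Del{z,\zeta}{i}{\alpha z,\beta w}$, extract the coefficients of the diagonal delta functions via Lemma~\ref{delta-2}, and observe that the $\lambda=1$ (diagonal) component of the $\g$-level identity is exactly $\bar\psi^\zeta$ applied to the coefficient functions appearing at the $\bar\g^\zeta$ level. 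The combinatorial bookkeeping you flag as the main obstacle is precisely what the paper's displayed computations \eqref{eq:uzbw1}--\eqref{eq:uzbw2} carry out.
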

\begin{proof} Note that for $a,b\in A$ and $\alpha,\beta\in \Gamma$, we have from the Lie relation \eqref{[a,b]} that
\begin{equation}\label{[a(alpha z),b(beta w)]}\begin{split}
&[a(\alpha z),b(\beta w)]=\sum_{\lambda,\gamma\in \Gamma}\sum_{i,j\geq0}\beta^{(i+j)(\epsilon-1)}\(\(w^\epsilon\frac{\partial}{\partial w}\)^{j}(a_{(\lambda,\gamma,i,j)}b)(\gamma\beta w)\)\Del{w,\epsilon}{i}{\alpha z,\lambda\beta w}.
\end{split}\end{equation}
We fix a generating function
\[u(z)=\sum_{k=1}^l\mu_k\pd{z}{\zeta}^{n_k}\overline{a_k}^{\alpha_k,\zeta}(z)\in \bar{\mathcal{A}}^{\zeta}\subset \bar\g^\zeta[[z,z^{-1}]],\]
where $\mu_k\in\C,\ n_k\in\N,\ a_k\in A$ and $\alpha_k\in\Gamma$.
Then it follows from  \eqref{[a(alpha z),b(beta w)]} and \eqref{partial-delta1} that
\begin{align}\label{eq:uzbw1}
&\,[\bar{\psi}^\zeta(u(z)),b(\beta w)]
=\,\sum_{k=1}^l\mu_k\pd{z}{\epsilon}^{n_k}\( [a_k(\alpha_k z),b(\beta w)]\)\\
=&\,\sum_{k=1}^l\sum_{\lambda,\gamma\in \Gamma}\sum_{i,j\geq0}\mu_k\beta^{(i+j)(\epsilon-1)}\frac{(i+n_k)!}{i!}(-1)^{n_k}(\lambda\beta\alpha_k^{-1})^{n_k(\epsilon-1)}\notag\\
&\,\(\(w^\epsilon\frac{\partial}{\partial w}\)^{j}({a_k}_{(\lambda,\gamma,i,j)}b)(\gamma\beta w)\)\Del{w,\epsilon}{i+n_k}{\alpha_k z,\lambda\beta w}\notag\\
=&\,\sum_{i\geq0}\sum_{{\substack{1\le k\le l\\n_k\le i}}}\sum_{{\substack{\gamma\in \Gamma\\j\geq 0}}} \beta^{(i+j-n_k)(\epsilon-1)}\frac{i!(-1)^{n_k}\mu_k\alpha_k^{\epsilon-1}}{(i-n_k)!}\(\(w^\epsilon\frac{\partial}{\partial w}\)^{j}({a_k}_{(\alpha_k\beta^{-1},\gamma,i-n_k,j)}b)(\gamma\beta w)\)\notag\\
&\,\Del{w,\epsilon}{i}{ z, w}+\sum_{k=1}^l\sum_{{\substack{\lambda,\gamma\in \Gamma\\ \lambda\neq \alpha_k\beta^{-1}}}}\sum_{i,j\geq0}\mu_k\beta^{(i+j)(\epsilon-1)}\frac{(i+n_k)!}{i!}(-1)^{n_k}(\lambda\beta\alpha_k^{-1})^{n_k(\epsilon-1)}\notag\cdot\\
&\,\(\(w^\epsilon\frac{\partial}{\partial w}\)^{j}({a_k}_{(\lambda,\gamma,i,j)}b)(\gamma\beta w)\)\Del{w,\epsilon}{i+n_k}{\alpha_k z,\lambda\beta w}.\notag
\end{align}

On the other hand, in $\bar{\g}^\zeta[[z,z^{-1}]]$, we have
\begin{align}\label{eq:uzbw2}
&\qquad [u(z),\bar{b}^{\beta,\zeta}(w)]
=\sum_{k=1}^l\mu_k\pd{z}{\zeta}^{n_k}\([\overline{a_k}^{\alpha_k,\zeta}(z),\bar{b}^{\beta,\zeta}(w)]\)\\
=&\,\sum_{k=1}^l\sum_{\substack{\gamma\in\Gamma\\i,j\geq0}}(-1)^{n_k}\mu_k\alpha_k^{\epsilon-1}\beta^{(i+j)(\epsilon-1)}\frac{(i+n_k)!}{i!}\(\(w^\zeta\frac{\partial}{\partial w}\)^{j}\overline{{a_k}_{(\alpha_k\beta^{-1},\gamma,i,j)}b}^{\gamma\beta,\zeta}(w)\)\Del{w,\zeta}{i+n_k}{ z,w}\notag\\
=&\,\sum_{i\geq0}\sum_{{\substack{1\le k\le l\\n_k\le i}}}\sum_{\substack{\gamma\in\Gamma\\j\geq0}}\beta^{(i+j-n_k)(\epsilon-1)}\frac{i!(-1)^{n_k}\mu_k\alpha_k^{\epsilon-1}}{(i-n_k)!}\(\(w^\zeta\frac{\partial}{\partial w}\)^{j}\overline{{a_k}_{(\alpha_k\beta^{-1},\gamma,i-n_k,j)}b}^{\gamma\beta,\zeta}(w)\)\Del{w,\zeta}{i}{ z,w}.\notag
\end{align}

Assume  that $u(z)\in \ker \bar{\psi}^\zeta$. Since $[\bar{\psi}^\zeta(u(z)),b(\beta w)]=0$, it follows from \eqref{eq:uzbw1} that
\[\sum_{{\substack{1\le k\le l\\n_k\le i}}}\sum_{{\substack{\gamma\in \Gamma\\j\geq 0}}} \beta^{(i+j-n_k)(\epsilon-1)}\frac{i!(-1)^{n_k}\mu_k\alpha_k^{\epsilon-1}}{(i-n_k)!}\(w^\epsilon\frac{\partial}{\partial w}\)^{j}({a_k}_{(\alpha_k\beta^{-1},\gamma,i-n_k,j)}b)(\gamma\beta w)=0\]
for any $i\in\N$. This is equivalent to say that
\begin{equation*}\begin{split}
\sum_{{\substack{1\le k\le l\\n_k\le i}}}\sum_{\substack{\gamma\in\Gamma\\j\geq0}}\beta^{(i+j-n_k)(\epsilon-1)}\frac{i!(-1)^{n_k}\mu_k\alpha_k^{\epsilon-1}}{(i-n_k)!}\(w^\zeta\frac{\partial}{\partial w}\)^{j}\overline{{a_k}_{(\alpha_k\beta^{-1},\gamma,i-n_k,j)}b}^{\gamma\beta,\zeta}(w)\in\ker\bar{\psi}^\zeta.
\end{split}\end{equation*}
Thus  from \eqref{eq:uzbw2} we obtain that $\bar{\g}_0^\zeta$ is a right ideal of  $\bar{\g}^\zeta$.
Similarly, one can check that $\bar{\g}_0^\zeta$ is also a left ideal.
\end{proof}

Recall that for $a\in A, \alpha\in \Gamma$ and $m\in \Z$, $a^{\alpha,\zeta}(m)$ denotes the image of $\bar{a}^{\alpha,\zeta}(m)$ in
the quotient space $\g^\zeta=\bar{\g}^\zeta/\bar{\g}_0^\zeta$, and that
\begin{align}
a^{\alpha,\zeta}(z)=\sum_{m\in \Z}a^{\alpha,\zeta}(m) z^{-m+\zeta-1}\in \g^\zeta[[z,z^{-1}]].
\end{align}
In view of Lemma \ref{lem:ideal}, $\g^\zeta$ is  a quotient algebra of $\bar{\g}^\zeta$ with the multiplication given by
\begin{equation}\label{untwisted-lb}\begin{split}
&[a^{\alpha,\zeta}(z),b^{\beta,\zeta}(w)]\\
=&\alpha^{\epsilon-1}\sum_{\gamma\in \Gamma}\sum_{i,j\geq 0}\beta^{(i+j)(\epsilon-1)}\(\pd{ w}{\zeta}^{j}(a_{(\alpha\beta^{-1},\gamma,i,j)}b)^{\gamma\beta,\zeta}(w)\)\Del{w,\zeta}{i}{z,w},
\end{split}\end{equation}
where $a,b\in A$ and $\alpha,\beta\in \Gamma$.

\begin{prpt}\label{prop:liealgebragzeta}
 $\g^\zeta$ is a Lie algebra under the multiplication \eqref{untwisted-lb}.
 \end{prpt}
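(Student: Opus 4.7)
The plan is to verify that the bracket given by \eqref{untwisted-lb} on $\g^\zeta=\bar{\g}^\zeta/\bar{\g}^\zeta_0$ is skew-symmetric and satisfies the Jacobi identity. The guiding principle is that the two key structural identities \eqref{skew-sym} and \eqref{jacobi-quasi} among the ``structure constants'' $(a_{(\alpha,\beta,i,j)}b)$ already encode skew-symmetry and Jacobi for $\g$. Since the multiplication on $\bar{\g}^\zeta$ uses exactly the same structure constants (only the derivative operator $w^\zeta\partial_w$ and the scalar factors $\alpha^{\epsilon-1}$, $\beta^{(i+j)(\epsilon-1)}$ depend on $\zeta$ vs.\ $\epsilon$), these identities can be transferred through the map $\bar{\psi}^\zeta$ to yield the Lie axioms modulo $\bar{\g}^\zeta_0$.

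For skew-symmetry, I would first compute $[\bar{a}^{\alpha,\zeta}(z),\bar{b}^{\beta,\zeta}(w)]$ and $[\bar{b}^{\beta,\zeta}(w),\bar{a}^{\alpha,\zeta}(z)]$ using \eqref{intro:multiplication}, then, mirroring the manipulations in the proof of Lemma \ref{lem:skew-sym}, use Lemma \ref{lem:partial-delta} to trade $w^\zeta\partial_w$ derivatives of the delta functions for $z^\zeta\partial_z$ derivatives and reindex the sums so that both expressions are put into the common canonical form
\[\sum_{\lambda,\gamma\in\Gamma}\sum_{k\ge 0} G_{\lambda,\gamma,k}(w)\,\Del{w,\zeta}{k}{w,\lambda z},\]
where each $G_{\lambda,\gamma,k}(w)$ lies in $\bar{\CA}^\zeta$. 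Lemma \ref{delta-2} then tells us that the sum $[\bar{a}^{\alpha,\zeta}(z),\bar{b}^{\beta,\zeta}(w)]+[\bar{b}^{\beta,\zeta}(w),\bar{a}^{\alpha,\zeta}(z)]$ has all its coefficients in $\bar{\g}^\zeta_0$ provided $\bar{\psi}^\zeta(G_{\lambda,\gamma,k}(w))=0$ in $\g[[w,w^{-1}]]$ for every $\lambda,\gamma,k$. But applying $\bar{\psi}^\zeta$ converts each summand $(w^\zeta\partial_w)^j\overline{c}^{\mu,\zeta}(w)$ into $(w^\epsilon\partial_w)^j c(\mu w)$, so the image is precisely the combination on the left-hand side of \eqref{skew-sym}, which vanishes by Lemma \ref{lem:skew-sym}.

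The Jacobi identity is proved in the same spirit, but with considerably more bookkeeping. Compute the three iterated brackets $[\bar{a}^{\alpha,\zeta}(z_1),[\bar{b}^{\beta,\zeta}(z_2),\bar{c}^{\gamma,\zeta}(z_3)]]$, $[[\bar{a}^{\alpha,\zeta}(z_1),\bar{b}^{\beta,\zeta}(z_2)],\bar{c}^{\gamma,\zeta}(z_3)]$, and $[\bar{b}^{\beta,\zeta}(z_2),[\bar{a}^{\alpha,\zeta}(z_1),\bar{c}^{\gamma,\zeta}(z_3)]]$ by iterating \eqref{intro:multiplication}, following the same reindexings carried out in the derivation of \eqref{jacobi-quasi} but with $w^\zeta\partial_w$ throughout and with the extra scalar factors of \eqref{intro:multiplication} tracked carefully. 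Reorganize each expression as a double sum over the basis elements $\Del{z_3,\zeta}{k}{z_2,\lambda z_3}\Del{z_3,\zeta}{i}{z_1,\eta z_3}$ with coefficients in $\bar{\CA}^\zeta$. By Lemma \ref{delta-2} the Jacobi combination lies in $\bar{\g}^\zeta_0$ as soon as each such coefficient maps to zero under $\bar{\psi}^\zeta$; and under $\bar{\psi}^\zeta$ those coefficients become exactly the two sides of \eqref{jacobi-quasi}, which agree by the Jacobi identity in $\g$.

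The main obstacle will be purely clerical: matching the $\zeta$-powers arising from $w^\zeta\partial_w$ in $\bar{\g}^\zeta$ with the $\epsilon$-powers and summation reindexings that appeared in the proofs of Lemma \ref{lem:skew-sym} and of the identity \eqref{jacobi-quasi}, and verifying that after application of $\bar{\psi}^\zeta$ the coefficients assemble precisely into the left-hand sides of \eqref{skew-sym} and \eqref{jacobi-quasi}. Once this correspondence is set up, the Lie axioms for $\g^\zeta$ become a formal consequence of those already established for $\g$.
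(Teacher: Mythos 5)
Your proposal is correct and follows essentially the same route as the paper: expand the brackets, use Lemma \ref{lem:partial-delta} to normalize the delta-function derivatives, invoke Lemma \ref{delta-2} to reduce skew-symmetry and Jacobi to identities among the coefficient generating functions, and then obtain those identities in $\g^\zeta$ by transferring \eqref{skew-sym} and \eqref{jacobi-quasi} through $\bar{\psi}^\zeta$ (i.e., modulo $\bar{\g}^\zeta_0$). The only cosmetic difference is that the paper performs the computation directly in the quotient $\g^\zeta$ using \eqref{untwisted-lb} rather than in $\bar{\g}^\zeta$ modulo $\bar{\g}^\zeta_0$, and there the delta functions are all supported at $\lambda=1$, so your extra sum over $\lambda\in\Gamma$ is superfluous but harmless.
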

 \begin{proof} We first prove the skew-symmetry.
For $a,b\in A$ and $\alpha,\beta\in\Gamma$,
we have
\begin{align*}
[a^{\alpha,\zeta}(z),b^{\beta,\zeta}(w)]
=&\alpha^{\epsilon-1}\sum_{\gamma\in \Gamma}\sum_{k,j\geq 0}\beta^{(k+j)(\epsilon-1)}\(\pd{ w}{\zeta}^{j}(a_{(\alpha\beta^{-1},\gamma,k,j)}b)^{\gamma\beta,\zeta}(w)\)\Del{w,\zeta}{k}{z,w}.
\end{align*}
On the other hand, we have
\begin{align*}
&-[b^{\beta,\zeta}(w),a^{\alpha,\zeta}(z)]\\
=&-\beta^{\epsilon-1}\sum_{\gamma\in \Gamma}\sum_{i,j\geq 0}\alpha^{(i+j)(\epsilon-1)}\(\pd{z}{\zeta}^{j}(b_{(\beta\alpha^{-1},\gamma,i,j)}a)^{\gamma\alpha,\zeta}(z)\)\Del{z,\zeta}{i}{w,z}\\
=&\,\beta^{\epsilon-1}\sum_{\gamma\in \Gamma}\sum_{i,j\geq 0}\frac{(-1)^{i+1}}{i!}\alpha^{(i+j)(\epsilon-1)}\pd{w}{\zeta}^{i}\(\(\pd{w}{\zeta}^{j}(b_{(\beta\alpha^{-1},\gamma,i,j)}a)^{\gamma\alpha,\zeta}(w)\)\Del{w,\zeta}{0}{z,w}\)\\
=&\,\beta^{\epsilon-1}\sum_{\gamma\in \Gamma}\sum_{i,j\geq 0}\sum_{k=0}^i\frac{(-1)^{i+1}}{(i-k)!}\alpha^{(i+j)(\epsilon-1)}\(\pd{w}{\zeta}^{i+j-k}(b_{(\beta\alpha^{-1},\gamma,i,j)}a)^{\gamma\alpha,\zeta}(w)\)\Del{w,\zeta}{k}{z,w}\\
=&\,\beta^{\epsilon-1}\sum_{\gamma\in \Gamma}\sum_{j,k\geq 0}\sum_{i=0}^j\frac{(-1)^{i+k+1}}{i!}\alpha^{(j+k)(\epsilon-1)}\(\pd{w}{\zeta}^{j}(b_{(\beta\alpha^{-1},\gamma,i+k,j-i)}a)^{\gamma\alpha,\zeta}(w)\)\Del{w,\zeta}{k}{z,w}.
\end{align*}

Thus, the proof of the skew-symmetry can be reduced to the proof of the following relations $(k\in \N)$:
\begin{equation}\label{2-1}\begin{split}
&\beta^{k(\epsilon-1)}\alpha^{\epsilon-1}\sum_{\gamma\in \Gamma}\sum_{j\geq 0}\beta^{j(\epsilon-1)}\pd{ w}{\zeta}^{j}(a_{(\alpha\beta^{-1},\gamma,k,j)}b)^{\gamma\beta,\zeta}(w)\\
=&\,\alpha^{k(\epsilon-1)}\beta^{\epsilon-1}\sum_{\gamma\in \Gamma}\sum_{j\geq 0}\sum_{i=0}^j\frac{(-1)^{i+k+1}}{i!}\alpha^{j(\epsilon-1)}\pd{w}{\zeta}^{j}(b_{(\beta\alpha^{-1},\gamma,i+k,j-i)}a)^{\gamma\alpha,\zeta}(w).
\end{split}\end{equation}
By taking $\lambda=\beta\alpha^{-1}$ and replacing $w$ with $\beta w$ in \eqref{skew-sym}, we have
\begin{equation*}\begin{split}
&\sum_{\gamma\in \Gamma}\sum_{j\geq0}\beta^{j(\epsilon-1)}\(w^\epsilon\frac{\partial}{\partial w}\)^{j}(a_{(\alpha\beta^{-1},\gamma,k,j)}b)(\gamma\beta w)\\
=&\,(\beta\alpha^{-1})^{(-k+1)(\epsilon-1)}\sum_{\gamma\in \Gamma}\sum_{j\geq0}\sum_{i=0}^j\frac{(-1)^{i+k+1}}{i!}\alpha^{j(\epsilon-1)}\(w^\epsilon\frac{\partial}{\partial w}\)^{j}(b_{(\beta\alpha^{-1},\gamma,i+k,j-i)}a)(\gamma\alpha w).
\end{split}\end{equation*}
This  implies  \eqref{2-1} and hence the skew-symmetry.

Next, we show that the Jacobi identity holds in $\g^\zeta$. For  $a,b,c\in A$ and $\alpha,\beta,\lambda\in\Gamma$,
we have
\begin{eqnarray*}
&&\quad[a^{\alpha,\zeta}(z_1),[b^{\beta,\zeta}(z_2),c^{\lambda,\zeta}(z_3)]]\\
&&=\beta^{\epsilon-1}\sum_{\xi\in \Gamma}\sum_{i,j\geq 0}\lambda^{(i+j)(\epsilon-1)}\(\pd{z_3}{\zeta}^{j}\big[a^{\alpha,\zeta}(z_1),(b_{(\beta\lambda^{-1},\xi,i,j)}c)^{\xi\lambda,\zeta}(z_3)\big]\)
\Del{z_3,\zeta}{i}{z_2,z_3}\\
&&=(\alpha\beta)^{\epsilon-1}\sum_{\substack{\gamma,\xi\in \Gamma\\i,j,k,l\geq0}}\lambda^{(i+j)(\epsilon-1)}(\lambda\xi)^{(k+l)(\epsilon-1)}\Del{z_3,\zeta}{i}{z_2,z_3}\\
&&\quad\pd{z_3}{\zeta}^{j}\(\(\pd{z_3}{\zeta}^{l}
\(a_{(\alpha(\xi\lambda)^{-1},\gamma,k,l)}(b_{(\beta\lambda^{-1},\xi,i,j)}c)\)^{\gamma\xi\lambda,\zeta}(z_3)\)\Del{z_3,\zeta}{k}{z_1,z_3}\)\\
&&=(\alpha\beta)^{\epsilon-1}\sum_{\substack{\gamma,\xi\in \Gamma\\i,j,k,l\geq0}}\sum_{s=0}^j\binom{j}{s}\frac{(k+s)!}{k!}\lambda^{(i+j)(\epsilon-1)}(\lambda\xi)^{(k+l)(\epsilon-1)}\cdot\\
&&\quad\(\pd{z_3}{\zeta}^{l+j-s}
\(a_{(\alpha(\xi\lambda)^{-1},\gamma,k,l)}(b_{(\beta\lambda^{-1},\xi,i,j)}c)\)^{\gamma\xi\lambda,\zeta}(z_3)\)
\Del{z_3,\zeta}{k+s}{z_1,z_3}\Del{z_3,\zeta}{i}{z_2,z_3}\\
&&=(\alpha\beta)^{\epsilon-1}\sum_{\substack{\gamma,\xi\in \Gamma\\i,j,k,l,s\geq0}}\binom{j+s}{s}\frac{(k+s)!}{k!}\lambda^{(i+j+s)(\epsilon-1)}
(\lambda\xi)^{(k+l)(\epsilon-1)}\cdot\\
&&\quad\(\pd{z_3}{\zeta}^{l+j}\(a_{(\alpha(\xi\lambda)^{-1},\gamma,k,l)}(b_{(\beta\lambda^{-1},\xi,i,j+s)}c)\)^{\gamma\xi\lambda,\zeta}(z_3)\)
\Del{z_3,\zeta}{k+s}{z_1,z_3}\Del{z_3,\zeta}{i}{z_2,z_3}\\
&&=(\alpha\beta)^{\epsilon-1}\sum_{\substack{\gamma,\xi\in \Gamma\\i,k,l\geq0}}\sum_{j=0}^l\sum_{s=0}^k\binom{j+s}{s}\frac{k!}{(k-s)!}\lambda^{(i+k+l)(\epsilon-1)}\xi^{(k+l-j-s)(\epsilon-1)}\cdot\\
&&\quad\(\pd{z_3}{\zeta}^{l}\(a_{(\alpha(\xi\lambda)^{-1},\gamma,k-s,l-j)}(b_{(\beta\lambda^{-1},\xi,i,j+s)}c)\)^{\gamma\xi\lambda,\zeta}(z_3)\)
\Del{z_3,\zeta}{k}{z_1,z_3}\Del{z_3,\zeta}{i}{z_2,z_3}.
\end{eqnarray*}

Swapping  $a^{\alpha,\zeta}(z_1)$  with $b^{\beta,\zeta}(z_2)$ in the above equation yields the following equation:
\begin{eqnarray*}
&&\quad[b^{\beta,\zeta}(z_2),[a^{\alpha,\zeta}(z_1),c^{\lambda,\zeta}(z_3)]]\\
&&=(\alpha\beta)^{\epsilon-1}\sum_{\substack{\gamma,\xi\in \Gamma\\i,k,l\geq0}}\sum_{j=0}^l\sum_{s=0}^k\binom{j+s}{s}\frac{k!}{(k-s)!}\lambda^{(i+k+l)(\epsilon-1)}\xi^{(k+l-j-s)(\epsilon-1)}\cdot\\
&&\quad\(\pd{z_3}{\zeta}^{l}\(b_{(\beta(\xi\lambda)^{-1},\gamma,k-s,l-j)}(a_{(\alpha\lambda^{-1},\xi,i,j+s)}c)\)^{\gamma\xi\lambda,\zeta}(z_3)\)
\Del{z_3,\zeta}{k}{z_2,z_3}\Del{z_3,\zeta}{i}{z_1,z_3}.
\end{eqnarray*}

On the other hand, we have
\begin{eqnarray*}
&&\quad[[a^{\alpha,\zeta}(z_1),b^{\beta,\zeta}(z_2)],c^{\lambda,\zeta}(z_3)]\\
&&=\alpha^{\epsilon-1}\sum_{\xi\in \Gamma}\sum_{i,j\geq 0}\beta^{(i+j)(\epsilon-1)}\(\pd{z_2}{\zeta}^{j}
\big[(a_{(\alpha\beta^{-1},\xi,i,j)}b)^{\xi\beta,\zeta}(z_2),c^{\lambda,\zeta}(z_3)\big]\)
\Del{z_2,\zeta}{i}{z_1,z_2}\\
&&=\sum_{\substack{\xi,\gamma\in \Gamma\\i,j,k,l\geq 0}}(-1)^{i+j}\frac{(k+j)!}{k!}(\xi\beta\alpha)^{\epsilon-1}\beta^{(i+j)(\epsilon-1)}\lambda^{(k+l)(\epsilon-1)}\cdot\\
&&\quad\(\pd{z_3}{\zeta}^{l}\((a_{(\alpha\beta^{-1},\xi,i,j)}b)_{(\xi\beta\lambda^{-1},\gamma,k,l)}c\)^{\gamma\lambda,\zeta}(z_3)\)
\Del{z_3,\zeta}{k+j}{z_2,z_3}\Del{z_1,\zeta}{i}{z_1,z_2}\\
&&=\sum_{\substack{\xi,\gamma\in \Gamma\\i,k,l\geq 0}}\sum_{j=0}^k(-1)^{i+j}\frac{1}{i!(k-j)!}(\xi\beta\alpha)^{\epsilon-1}\beta^{(i+j)(\epsilon-1)}\lambda^{(k+l-j)(\epsilon-1)}\cdot\\
&&\quad\(\pd{z_3}{\zeta}^{l}\((a_{(\alpha\beta^{-1},\xi,i,j)}b)_{(\xi\beta\lambda^{-1},\gamma,k-j,l)}c\)^{\gamma\lambda,\zeta}(z_3)\)\\
&&\quad\pd{z_3}{\zeta}^{k}\pd{z_1}{\zeta}^{i}\(\Del{z_3,\zeta}{0}{z_2,z_3}\Del{z_3,\zeta}{0}{z_1,z_3}\)\\
&&=\sum_{\substack{\xi,\gamma\in \Gamma\\i,k,l\geq 0}}\sum_{j=0}^k\sum_{s=0}^{k}\binom{i+s}{s}\frac{(-1)^{j}k!}{(k-j)!}(\xi\beta\alpha)^{\epsilon-1}\beta^{(i+j)(\epsilon-1)}\lambda^{(k+l-j)(\epsilon-1)}\cdot\\
&&\quad\(\pd{z_3}{\zeta}^{l}\((a_{(\alpha\beta^{-1},\xi,i,j)}b)_{(\xi\beta\lambda^{-1},\gamma,k-j,l)}c\)^{\gamma\lambda,\zeta}(z_3)\)
\Del{z_3,\zeta}{k-s}{z_2,z_3}\Del{z_3,\zeta}{i+s}{z_1,z_3}\\
&&=\sum_{\substack{\xi,\gamma\in \Gamma\\i,k,l,s\geq 0}}\sum_{j=0}^{k+s}\binom{i+s}{s}\frac{(-1)^{j}(k+s)!}{(k+s-j)!}(\xi\beta\alpha)^{\epsilon-1}\beta^{(i+j)(\epsilon-1)}\lambda^{(k+s+l-j)(\epsilon-1)}\cdot\\
&&\quad\(\pd{z_3}{\zeta}^{l}\((a_{(\alpha\beta^{-1},\xi,i,j)}b)_{(\xi\beta\lambda^{-1},\gamma,k+s-j,l)}c\)^{\gamma\lambda,\zeta}(z_3)\)
\Del{z_3,\zeta}{k}{z_2,z_3}\Del{z_3,\zeta}{i+s}{z_1,z_3}\\
&&=\sum_{\substack{\xi,\gamma\in \Gamma\\i,k,l\geq 0}}\sum_{s=0}^i\sum_{j=0}^{k+s}\binom{i}{s}\frac{(-1)^{j}(k+s)!}{(k+s-j)!}(\xi\beta\alpha)^{\epsilon-1}\beta^{(i+j-s)(\epsilon-1)}\lambda^{(k+s+l-j)(\epsilon-1)}\cdot\\
&&\quad\(\pd{z_3}{\zeta}^{l}\((a_{(\alpha\beta^{-1},\xi,i-s,j)}b)_{(\xi\beta\lambda^{-1},\gamma,k+s-j,l)}c\)^{\gamma\lambda,\zeta}(z_3)\)
\Del{z_3,\zeta}{k}{z_2,z_3}\Del{z_3,\zeta}{i}{z_1,z_3}.
\end{eqnarray*}

Thus we have shown that the Jacobi identity $$[a^{\alpha,\zeta}(z_1),[b^{\beta,\zeta}(z_2),c^{\lambda,\zeta}(z_3)]]=[[a^{\alpha,\zeta}(z_1),b^{\beta,\zeta}(z_2)],c^{\lambda,\zeta}(z_3)]+[b^{\beta,\zeta}(z_2),[a^{\alpha,\zeta}(z_1),c^{\lambda,\zeta}(z_3)]]$$
 is equivalent to the identity ($i,k\in\N$):
 \begin{eqnarray}\label{4-2}
\nno&&\quad(\alpha\beta)^{\epsilon-1}\sum_{\substack{\gamma,\xi\in \Gamma\\l\geq0}}\sum_{j=0}^l\sum_{s=0}^i\binom{j+s}{s}\frac{i!}{(i-s)!}\lambda^{(i+k+l)(\epsilon-1)}\xi^{(i+l-j-s)(\epsilon-1)}\cdot\\
\nno&&\quad\pd{z_3}{\zeta}^{l}\(a_{(\alpha(\xi\lambda)^{-1},\gamma,i-s,l-j)}(b_{(\beta\lambda^{-1},\xi,k,j+s)}c)\)^{\gamma\xi\lambda,\zeta}(z_3)\\
\nno&&=(\beta\alpha)^{\epsilon-1}\sum_{\substack{\gamma,\xi\in \Gamma\\l\geq 0}}\sum_{s=0}^i\sum_{j=0}^{k+s}\binom{i}{s}\frac{(-1)^{j}(k+s)!}{(k+s-j)!}\xi^{\epsilon-1}\beta^{(i+j-s)(\epsilon-1)}\lambda^{(k+s+l-j)(\epsilon-1)}\cdot\\
&&\quad\pd{z_3}{\zeta}^{l}\((a_{(\alpha\beta^{-1},\xi,i-s,j)}b)_{(\xi\beta\lambda^{-1},\gamma,k+s-j,l)}c\)^{\gamma\lambda,\zeta}(z_3)\\
\nno&&\quad+(\alpha\beta)^{\epsilon-1}\sum_{\substack{\gamma,\xi\in \Gamma\\l\geq0}}\sum_{j=0}^l\sum_{s=0}^k\binom{j+s}{s}\frac{k!}{(k-s)!}\lambda^{(i+k+l)(\epsilon-1)}\xi^{(k+l-j-s)(\epsilon-1)}\cdot\\
\nno&&\quad\pd{z_3}{\zeta}^{l}\(b_{(\beta(\xi\lambda)^{-1},\gamma,k-s,l-j)}(a_{(\alpha\lambda^{-1},\xi,i,j+s)}c)\)^{\gamma\xi\lambda,\zeta}(z_3).
\end{eqnarray}
Note that the identity \eqref{4-2} can be simplified as follows:
\begin{eqnarray*}
&&\quad\sum_{\substack{\gamma,\xi\in \Gamma\\l\geq0}}\sum_{j=0}^l\sum_{s=0}^i\binom{j+s}{s}\frac{i!}{(i-s)!}\lambda^{l(\epsilon-1)}\xi^{(i+l-j-s)(\epsilon-1)}\cdot\\
&&\quad\pd{z_3}{\zeta}^{l}\(a_{(\alpha(\xi\lambda)^{-1},\gamma,i-s,l-j)}(b_{(\beta\lambda^{-1},\xi,k,j+s)}c)\)^{\gamma\xi\lambda,\zeta}(z_3)\\
&&=\sum_{\substack{\gamma,\xi\in \Gamma\\l\geq 0}}\sum_{s=0}^i\sum_{j=0}^{k+s}\binom{i}{s}\frac{(-1)^{j}(k+s)!}{(k+s-j)!}\xi^{\epsilon-1}(\beta\lambda^{-1})^{(i+j-s)(\epsilon-1)}\lambda^{l(\epsilon-1)}\cdot\\
&&\quad\pd{z_3}{\zeta}^{l}\((a_{(\alpha\beta^{-1},\xi,i-s,j)}b)_{(\xi\beta\lambda^{-1},\gamma,k+s-j,l)}c\)^{\gamma\lambda,\zeta}(z_3)\\
&&\quad+\sum_{\substack{\gamma,\xi\in \Gamma\\l\geq0}}\sum_{j=0}^l\sum_{s=0}^k\binom{j+s}{s}\frac{k!}{(k-s)!}\lambda^{l(\epsilon-1)}\xi^{(k+l-j-s)(\epsilon-1)}\cdot\\
&&\quad\pd{z_3}{\zeta}^{l}\(b_{(\beta(\xi\lambda)^{-1},\gamma,k-s,l-j)}(a_{(\alpha\lambda^{-1},\xi,i,j+s)}c)\)^{\gamma\xi\lambda,\zeta}(z_3),
\end{eqnarray*}
which  follows from \eqref{jacobi-quasi} (replacing $\lambda$ with $\beta\lambda^{-1}$,  taking $\eta=\alpha\lambda^{-1}$ and  replacing $z$ with $\lambda z$ therein).
This completes the proof of the proposition.
\end{proof}

Note that Lemma \ref{lem:ideal} and Proposition \ref{prop:liealgebragzeta} imply the Theorem \ref{thm:main1}\,(I).

\subsection{Proof of Theorem \ref{thm:main1} (II)} In this subsection we prove the second part of Theorem \ref{thm:main1}.
For $\lambda\in \Gamma$, we  define a linear transformation $\bar{\sigma}_\lambda$ on $\bar{\g}^\epsilon$ by
\begin{align*}
\bar{\sigma}_\lambda\(\bar{a}^{\alpha,\epsilon}(m)\)=\lambda^{-m+\epsilon-1}\bar{a}^{\alpha\lambda^{-1},\epsilon}(m),
\end{align*}
where $a\in A,\ \alpha\in \Gamma$ and $m\in \Z$.  In term of the generating functions, this is equivalent to
\[\bar{\sigma}_\lambda(\bar{a}^{\alpha,\epsilon}(z))=\bar{a}^{\alpha\lambda^{-1},\epsilon}(\lambda z).\]

\begin{lemt}\label{lem:barsigma} For any $\lambda\in \Gamma$, one has  $\bar{\sigma}_\lambda(\bar{\g}^\epsilon_0)= \bar{\g}^\epsilon_0$.
\end{lemt}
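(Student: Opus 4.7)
My approach is to show that $\bar\sigma_\lambda$ sends every generating function in $\ker\bar\psi^\epsilon$ to a rescaling of a generating function that is itself in $\ker\bar\psi^\epsilon$, which by the definition of $\bar\g^\epsilon_0$ forces $\bar\sigma_\lambda(\bar\g^\epsilon_0)\subseteq\bar\g^\epsilon_0$. The key technical ingredient is the elementary operator identity
\[\(z^\epsilon\tfrac{\partial}{\partial z}\)^n\circ S_\lambda=\lambda^{n(1-\epsilon)}\,S_\lambda\circ\(z^\epsilon\tfrac{\partial}{\partial z}\)^n\quad\text{on } \bar\g^\epsilon[[z,z^{-1}]],\]
where $S_\lambda$ denotes the substitution $z\mapsto\lambda z$; this follows by induction from $(z^\epsilon\partial_z)z^k=k z^{k+\epsilon-1}$ and does not involve the Lie structure at all.

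Given $u(z)=\sum_k\mu_k\(z^\epsilon\tfrac{\partial}{\partial z}\)^{n_k}\bar a_k^{\alpha_k,\epsilon}(z)\in\ker\bar\psi^\epsilon$, I first note that $\bar\sigma_\lambda$, being extended coefficient-wise, commutes with every operator acting only on the formal variable $z$, so
\[\bar\sigma_\lambda u(z)=\sum_k\mu_k\(z^\epsilon\tfrac{\partial}{\partial z}\)^{n_k}\bar a_k^{\alpha_k\lambda^{-1},\epsilon}(\lambda z).\]
Applying the operator identity above puts this in the form $\bar\sigma_\lambda u(z)=\tilde v(\lambda z)$, where
\[\tilde v(w):=\sum_k\mu_k\lambda^{n_k(1-\epsilon)}\(w^\epsilon\tfrac{\partial}{\partial w}\)^{n_k}\bar a_k^{\alpha_k\lambda^{-1},\epsilon}(w)\in\bar{\mathcal{A}}^\epsilon.\]
Next I would compute $\bar\psi^\epsilon(\tilde v(w))$ and then specialize $w=\lambda z$: using the same operator identity in reverse, the scalar $\lambda^{n_k(1-\epsilon)}$ is exactly cancelled by a factor $\lambda^{n_k(\epsilon-1)}$ produced by the specialization, so
\[\bar\psi^\epsilon(\tilde v(w))\big|_{w=\lambda z}=\sum_k\mu_k\(z^\epsilon\tfrac{\partial}{\partial z}\)^{n_k}a_k(\alpha_k z)=\bar\psi^\epsilon(u(z))=0.\]
Since the substitution $w\mapsto\lambda z$ is invertible (as $\lambda\in\C^\times$), this forces $\bar\psi^\epsilon(\tilde v(w))=0$, i.e.\ $\tilde v(w)\in\ker\bar\psi^\epsilon$. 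Hence every coefficient of $\tilde v(w)$ lies in $\bar\g^\epsilon_0$, and because the coefficients of $\bar\sigma_\lambda u(z)=\tilde v(\lambda z)$ are merely nonzero scalar multiples of those, they also lie in $\bar\g^\epsilon_0$.

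This yields $\bar\sigma_\lambda(\bar\g^\epsilon_0)\subseteq\bar\g^\epsilon_0$, and the reverse inclusion follows from the identical argument applied to $\lambda^{-1}$ together with the obvious relation $\bar\sigma_\lambda\bar\sigma_{\lambda^{-1}}=\mathrm{id}$. I do not foresee a genuine obstacle: the one point requiring care is the bookkeeping of the scalar $\lambda^{n_k(1-\epsilon)}$ introduced when commuting $(z^\epsilon\partial_z)^{n_k}$ past $S_\lambda$, and the verification that it is exactly reciprocal to the factor produced when $\bar\psi^\epsilon$ is applied and $w=\lambda z$ is substituted back.
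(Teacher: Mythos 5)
Your proof is correct and follows essentially the same route as the paper: you construct exactly the same series $v(z)=\sum_k\mu_k\lambda^{n_k(1-\epsilon)}\(z^\epsilon\frac{\partial}{\partial z}\)^{n_k}\overline{a_k}^{\alpha_k\lambda^{-1},\epsilon}(z)$, verify $\bar\sigma_\lambda(u(z))=v(\lambda z)$, and deduce $v(z)\in\ker\bar\psi^\epsilon$ from $\bar\psi^\epsilon(u(z))=0$ via the invertible substitution $w=\lambda z$, with the operator identity $\(z^\epsilon\frac{\partial}{\partial z}\)^{n}\circ S_\lambda=\lambda^{n(1-\epsilon)}S_\lambda\circ\(z^\epsilon\frac{\partial}{\partial z}\)^{n}$ just making explicit the bookkeeping the paper carries out directly. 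Your explicit handling of the reverse inclusion via $\bar\sigma_\lambda\bar\sigma_{\lambda^{-1}}=\mathrm{id}$ is a small point the paper leaves implicit, but not a genuinely different argument.
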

\begin{proof} Recall that  $\bar{\g}^\epsilon_0$ is spanned by the
coefficients of the generating functions in $\ker \bar{\psi}^\epsilon$.
We prove the lemma by showing that for any $u(z)\in\ker\bar{\psi}^\epsilon$ and $\lambda\in \Gamma$, there exists a
$v(z)\in\ker\bar{\psi}^\epsilon $ such that  $\bar\sigma_\lambda(u(z))=v(\lambda z)$.

Assume that
\[u(z)=\sum_{i=1}^k\mu_i\pd{z}{\epsilon}^{n_i}\overline{a_i}^{\alpha_i,\epsilon}(z),\]
 where $\mu_i\in\C,n_i\in\N, a_i\in A$ and $\alpha_i\in\Gamma$.
We define
\[v(z)=\sum_{i=1}^k\mu_i\lambda^{n_i(1-\epsilon)}\pd{z}{\epsilon}^{n_i}\overline{a_i}^{\alpha_i\lambda^{-1},\epsilon}(z).\]
By definition we have
\[\bar\sigma_\lambda(u(z))=\sum_{i=1}^k\mu_i \pd{z}{\epsilon}^{n_i}\overline{a_i}^{\alpha_i\lambda^{-1},\epsilon}(\lambda z)=v(\lambda z).\]
Since
\[\bar{\psi}^\epsilon(u(z))=\sum_{i=1}^k\mu_i\pd{z}{\epsilon}^{n_i}a_i(\alpha_iz)=0,\] we obtain
\[\bar{\psi}^\epsilon(v(z))=\sum_{i=1}^k\mu_i\lambda^{n_i(1-\epsilon)}\pd{z}{\epsilon}^{n_i}a_i(\alpha_i\lambda^{-1}z) =\bigg(\sum_{i=1}^k\mu_i\pd{w}{\epsilon}^{n_i}a_i(\alpha_iw)\bigg)\Big|_{w=\lambda ^{-1}z}=0.\]
This shows that $v(z)\in\ker\bar{\psi}^\epsilon$, as desired.
\end{proof}

 Lemma \ref{lem:barsigma} implies that $\bar{\sigma}_\lambda$ induces a linear transformation, say $\sigma_\lambda$, on $\g^\epsilon$ such that
\[\sigma_\lambda\(a^{\alpha,\epsilon}(m)\)=\lambda^{-m+\epsilon-1}a^{\alpha\lambda^{-1},\epsilon}(m),\]
where $a\in A,\ \alpha\in \Gamma$ and $m\in \Z,$ or equivalently, such that
\[\sigma_\lambda(a^{\alpha,\epsilon}(z))=a^{\alpha\lambda^{-1},\epsilon}(\lambda z).\]

\begin{lemt}\label{lem:sigma} For every $\lambda\in \Gamma$, $\sigma_\lambda$ is a Lie  automorphism of the Lie algebra $\g^\epsilon$.  Furthermore,
for any $u,v\in \g^\epsilon$, $[\sigma_\lambda(u),v]=0$ for all but finitely many $\lambda\in \Gamma$.
\end{lemt}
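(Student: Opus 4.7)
The plan is to derive both claims by direct computation from the bracket formula \eqref{untwisted-lb}. First, for the automorphism property, I will note that $\sigma_\lambda\circ\sigma_\mu=\sigma_{\lambda\mu}$ holds immediately from the definition of $\sigma_\lambda$, so $\sigma_\lambda$ is bijective with inverse $\sigma_{\lambda^{-1}}$. It then remains to verify the homomorphism property
\[
[\sigma_\lambda a^{\alpha,\epsilon}(z),\,\sigma_\lambda b^{\beta,\epsilon}(w)]=\sigma_\lambda\bigl([a^{\alpha,\epsilon}(z),b^{\beta,\epsilon}(w)]\bigr).
\]
The left-hand side equals $[a^{\alpha\lambda^{-1},\epsilon}(\lambda z),b^{\beta\lambda^{-1},\epsilon}(\lambda w)]$, and I will expand it via \eqref{untwisted-lb} applied with $(\alpha,\beta)$ replaced by $(\alpha\lambda^{-1},\beta\lambda^{-1})$ and $(z,w)$ replaced by $(\lambda z,\lambda w)$. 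A chain-rule computation shows that this substitution contributes a factor $\lambda^{j(\epsilon-1)}$ from each operator $(w^\epsilon\partial/\partial w)^j$ and a factor $\lambda^{(i+1)(\epsilon-1)}$ from each $\Del{w,\epsilon}{i}{z,w}$. Collecting powers of $\lambda$ from $(\alpha\lambda^{-1})^{\epsilon-1}$, $(\beta\lambda^{-1})^{(i+j)(\epsilon-1)}$, and these two rescaling contributions yields total exponent $(\epsilon-1)\bigl(-1-(i+j)+j+(i+1)\bigr)=0$, so all $\lambda$-powers cancel. Combined with the identity $(a_{(\alpha\beta^{-1},\gamma,i,j)}b)^{\gamma\beta\lambda^{-1},\epsilon}(\lambda w)=\sigma_\lambda\bigl((a_{(\alpha\beta^{-1},\gamma,i,j)}b)^{\gamma\beta,\epsilon}(w)\bigr)$, which is immediate from the definition of $\sigma_\lambda$, the resulting expression is precisely $\sigma_\lambda$ applied to the right-hand side of \eqref{untwisted-lb}, finishing the homomorphism check.

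For the second assertion, by bilinearity it suffices to fix basis elements $u=a^{\alpha,\epsilon}(m)$ and $v=b^{\beta,\epsilon}(n)$ and prove that $[\sigma_\lambda u,v]=0$ for all but finitely many $\lambda\in\Gamma$. Since $\sigma_\lambda u=\lambda^{-m+\epsilon-1}a^{\alpha\lambda^{-1},\epsilon}(m)$, the scalar factor is irrelevant, so the claim reduces to $[a^{\alpha\lambda^{-1},\epsilon}(m),b^{\beta,\epsilon}(n)]=0$ outside a finite set. Applying \eqref{untwisted-lb} with $\alpha$ replaced by $\alpha\lambda^{-1}$, every term on the right-hand side involves only structure constants of the form $a_{(\alpha\lambda^{-1}\beta^{-1},\gamma,i,j)}b$. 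The finite-support axiom in Definition \ref{def:g} guarantees that the set $S_{a,b}:=\{\delta\in\C^\times\mid a_{(\delta,\gamma,i,j)}b\ne 0\text{ for some }\gamma\in\Gamma,\,i,j\in\N\}$ is finite, so the bracket must vanish whenever $\alpha\lambda^{-1}\beta^{-1}\notin S_{a,b}$, i.e.\ for all $\lambda$ outside the finite set $\alpha\beta^{-1}S_{a,b}^{-1}\subset\Gamma$.

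The only genuinely technical point is the exponent bookkeeping in the first step, and my expectation is that this cancellation (via the algebraic identity $-1-(i+j)+j+(i+1)=0$) is the whole difficulty; once these $\lambda$-powers are shown to cancel, both statements fall out essentially by inspection, with the second reducing entirely to the finite-support clause built into Definition \ref{def:g}.
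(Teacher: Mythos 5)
Your proposal is correct and follows essentially the same route as the paper: the homomorphism property is checked by expanding $[a^{\alpha\lambda^{-1},\epsilon}(\lambda z),b^{\beta\lambda^{-1},\epsilon}(\lambda w)]$ via \eqref{untwisted-lb} and verifying that all powers of $\lambda$ cancel (the paper packages your chain-rule bookkeeping into the delta-function identity \eqref{eq:partial-delta0}, but the computation is the same), and the finiteness assertion is reduced, exactly as in the paper, to the finite-support clause on the structure constants $a_{(\alpha\lambda^{-1}\beta^{-1},\gamma,i,j)}b$ from Definition \ref{def:g}.
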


\begin{proof}It is straightforward to see that $\sigma_\lambda$ is bijective. For $a,b\in A$ and $\alpha,\beta\in\Gamma$,  from \eqref{untwisted-lb} and
\eqref{eq:partial-delta0}
it follows that
\begin{equation*}\begin{split}
&\,[\sigma_\lambda( a^{\alpha,\epsilon}(z)),\sigma_\lambda(b^{\beta,\epsilon}(w))]=[ a^{\alpha\lambda^{-1},\epsilon}(\lambda z),b^{\beta\lambda^{-1},\epsilon}(\lambda w)]\\
=&\,(\alpha\lambda^{-1})^{\epsilon-1}\sum_{\gamma\in \Gamma}\sum_{i,j\geq 0}\beta^{(i+j)(\epsilon-1)}\(\pd{ w}{\epsilon}^{j}(a_{(\alpha\beta^{-1},\gamma,i,j)}b)^{\gamma\beta\lambda^{-1},\epsilon}(\lambda  w)\)\Del{w,\epsilon}{i}{\lambda z,\lambda w}\\
=&\,\alpha^{\epsilon-1}\sum_{\gamma\in \Gamma}\sum_{i,j\geq 0}\beta^{(i+j)(\epsilon-1)}\(\pd{ w}{\epsilon}^{j}(a_{(\alpha\beta^{-1},\gamma,i,j)}b)^{\gamma\beta\lambda^{-1},\epsilon}(\lambda  w)\)\Del{w,\epsilon}{i}{ z, w}\\
=&\,\sigma_\lambda\big([a^{\alpha,\epsilon}(z),b^{\beta,\epsilon}(w)]\big).
\end{split}\end{equation*}
This proves the first assertion. For the second one, let $a,b\in A$ and $\alpha,\beta\in \Gamma$ be fixed. For every $\lambda\in \Gamma$, we have
\begin{equation*}\begin{split}
&\,[\sigma_\lambda( a^{\alpha,\epsilon}(z)),b^{\beta,\epsilon}(w)]=[ a^{\alpha\lambda^{-1},\epsilon}(\lambda z),b^{\beta,\epsilon}(w)]\\
=&\,(\alpha\lambda^{-1})^{\epsilon-1}\sum_{\gamma\in \Gamma}\sum_{i,j\geq 0}\beta^{(i+j)(\epsilon-1)}\(\pd{ w}{\epsilon}^{j}(a_{(\alpha\lambda^{-1}\beta^{-1},\gamma,i,j)}b)^{\gamma\beta,\epsilon}(w)\)\Del{w,\epsilon}{i}{\lambda z,w}.
\end{split}\end{equation*}
Note that the Lie relation \eqref{[a,b]} implies that $(a_{(\alpha\lambda^{-1}\beta^{-1},\gamma,i,j)}b)^{\gamma\beta,\epsilon}(w)=0$ for all but finitely many $\lambda,\gamma\in\Gamma,i,j\in\N$. Then the second assertion follows immediately.
\end{proof}

Recall the multiplication $[\cdot,\cdot]_\Gamma$ on $\g^\epsilon$
 and the subspace $\g^\epsilon_\Gamma$ of $\g^\epsilon$ defined in Theorem \ref{thm:main1} (II).

\begin{prpt}\label{lem:thm1-2}  $\g^\epsilon_\Gamma$
is a two-sided ideal of the nonassociative algebra $\g^\epsilon$ under the multiplication $[\cdot,\cdot]_\Gamma$, and the quotient algebra
$\g^\epsilon[\Gamma]=\g^\epsilon/\g^\epsilon_\Gamma$ is a Lie algebra.
\end{prpt}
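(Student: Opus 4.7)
The plan is to reinterpret the new multiplication through the Lie automorphisms $\sigma_\lambda$ of $\g^\epsilon$ constructed in Lemma~\ref{lem:sigma}. From the formula $\sigma_\lambda(a^{\alpha,\epsilon}(m))=\lambda^{-m+\epsilon-1}a^{\alpha\lambda^{-1},\epsilon}(m)$ one reads off two key facts: the definition of $[\cdot,\cdot]_\Gamma$ becomes
\[
[u,v]_\Gamma=\sum_{\lambda\in\Gamma}[\sigma_\lambda(u),v]\qquad(u,v\in\g^\epsilon),
\]
a finite sum by the second part of Lemma~\ref{lem:sigma}; and $\g^\epsilon_\Gamma$ is precisely the linear span of the elements $\sigma_\lambda(x)-x$ for $x\in\g^\epsilon$, $\lambda\in\Gamma$. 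Together with the multiplicativity $\sigma_\lambda\sigma_\mu=\sigma_{\lambda\mu}$ (immediate from the formula), these observations reduce the whole proposition to reindexing finitely supported sums over $\Gamma$.

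First I would establish the two-sided ideal property. For a generator $u=\sigma_\mu(x)-x$, applying the displayed formula and then reindexing the first sum by $\lambda\mapsto\lambda\mu^{-1}$ yields
\[
[u,v]_\Gamma=\sum_\lambda[\sigma_{\lambda\mu}(x),v]-\sum_\lambda[\sigma_\lambda(x),v]=0,
\]
so in fact $[\g^\epsilon_\Gamma,\g^\epsilon]_\Gamma=0$. On the other side, using that $\sigma_\mu$ is a Lie automorphism,
\[
\sum_\lambda[\sigma_\lambda(v),\sigma_\mu(x)]=\sum_\lambda\sigma_\mu\bigl([\sigma_{\lambda\mu^{-1}}(v),x]\bigr)=\sigma_\mu\bigl([v,x]_\Gamma\bigr),
\]
and subtracting $[v,x]_\Gamma$ gives $[v,\sigma_\mu(x)-x]_\Gamma=\sigma_\mu([v,x]_\Gamma)-[v,x]_\Gamma\in\g^\epsilon_\Gamma$. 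Hence $[\cdot,\cdot]_\Gamma$ descends to a well-defined bilinear operation on $\g^\epsilon[\Gamma]$.

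Finally I would verify the two Lie axioms in the quotient. For skew-symmetry, rewriting $[u,\sigma_\lambda(v)]=\sigma_\lambda([\sigma_{\lambda^{-1}}(u),v])$ via the automorphism property and reindexing $\lambda\leftrightarrow\lambda^{-1}$ recasts the sum $[u,v]_\Gamma+[v,u]_\Gamma$ as $\sum_\mu\bigl(d_\mu-\sigma_{\mu^{-1}}(d_\mu)\bigr)$ with $d_\mu=[\sigma_\mu(u),v]$, which lies in $\g^\epsilon_\Gamma$. For Jacobi, expanding $\sigma_\mu([u,v]_\Gamma)=\sum_\lambda[\sigma_{\mu\lambda}(u),\sigma_\mu(v)]$ and reindexing $\rho=\mu\lambda$ yields
\[
[[u,v]_\Gamma,w]_\Gamma=\sum_{\mu,\rho}[[\sigma_\rho(u),\sigma_\mu(v)],w];
\]
applying the ordinary Jacobi identity in $\g^\epsilon$ (Proposition~\ref{prop:liealgebragzeta}) to the inner bracket separates the resulting double sum into $[u,[v,w]_\Gamma]_\Gamma-[v,[u,w]_\Gamma]_\Gamma$, which together with the skew-symmetry already established gives the cyclic Jacobi identity on $\g^\epsilon[\Gamma]$. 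The only real obstacle is bookkeeping — ensuring every sum over $\Gamma$ remains finitely supported so that translations and inversions in the index are legitimate — and this is guaranteed uniformly by the finiteness statement in Lemma~\ref{lem:sigma}.
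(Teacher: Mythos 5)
Your proposal is correct and follows essentially the same route as the paper: the paper makes exactly the same two identifications, namely $[u,v]_\Gamma=\sum_{\lambda\in\Gamma}[\sigma_\lambda(u),v]$ and $\g^\epsilon_\Gamma=\mathrm{Span}\{\sigma_\lambda(u)-u\}$, and then invokes the general covariant-algebra result \cite[Lemma 4.1]{Li3} together with Lemma~\ref{lem:sigma}. The only difference is that you prove that cited lemma inline (ideal property, skew-symmetry modulo $\g^\epsilon_\Gamma$, and the Leibniz form of Jacobi via reindexing of finitely supported sums), which is a correct and complete substitute for the citation.
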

\begin{proof}
By definition we have $[u,v]_\Gamma=\sum_{\lambda\in \Gamma}[\sigma_\lambda(u),v]$  for $u,v\in \g^\epsilon$, and note that the subspace
\[\g^\epsilon_\Gamma=\te{Span}\{\sigma_\lambda(u)-u\mid \lambda\in \Gamma, u\in \g^\epsilon\}.\]
Then the assertion follows from  Lemma \ref{lem:sigma} and \cite[Lemma 4.1]{Li3}.
\end{proof}

For $a\in A, \alpha\in \Gamma$ and $m\in \Z$, set
$a^{\alpha}(m)=a^{\alpha,\epsilon}(m)+\g^\epsilon_\Gamma\in \g^\epsilon[\Gamma]$ and
\begin{align}\label{a^alpha(z)}
a^{\alpha}(z)=\sum_{m\in \Z} a^{\alpha}(m)z^{-m+\epsilon-1}\in \g^\epsilon[\Gamma][[z,z^{-1}]].
\end{align}
The following result gives the commutators among these generating functions.

\begin{lemt}For $a,b\in A$ and $\alpha,\beta\in \Gamma$, one has
\begin{align}\label{universal-twisted-lb}\begin{split}
&\,[a^{\alpha}(z),b^{\beta}(w)]_\Gamma\\
=&\,\sum_{\lambda,\gamma\in \Gamma}\sum_{i,j\geq 0}\alpha^{\epsilon-1}\beta^{(i+j)(\epsilon-1)}\(\pd{ w}{\epsilon}^{j}(a_{(\lambda\alpha\beta^{-1},\gamma,i,j)}b)^{\gamma\beta}(w)\)\Del{w,\epsilon}{i}{ z,\lambda w}.
\end{split}\end{align}
\end{lemt}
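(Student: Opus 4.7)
The plan is to compute $[a^{\alpha}(z), b^{\beta}(w)]_\Gamma$ by first evaluating the analogous bracket in $\g^\epsilon$ and then projecting to $\g^\epsilon[\Gamma]$. The key starting observation is that, by the definition of $[\cdot,\cdot]_\Gamma$ in Theorem \ref{thm:main1}\,(II) together with the formula $\sigma_\lambda(a^{\alpha,\epsilon}(z)) = a^{\alpha\lambda^{-1},\epsilon}(\lambda z)$, one may write
\[[a^{\alpha,\epsilon}(z), b^{\beta,\epsilon}(w)]_\Gamma = \sum_{\lambda\in\Gamma}[\sigma_\lambda(a^{\alpha,\epsilon}(z)), b^{\beta,\epsilon}(w)] = \sum_{\lambda\in\Gamma}[a^{\alpha\lambda^{-1},\epsilon}(\lambda z), b^{\beta,\epsilon}(w)],\]
with only finitely many nonzero summands on the right by Lemma \ref{lem:sigma}, so the identity is well-defined coefficient-wise in $z,w$.

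Next I would apply the commutator formula \eqref{untwisted-lb} (with $\zeta=\epsilon$ and $\alpha$ replaced by $\alpha\lambda^{-1}$) and then substitute $z\mapsto\lambda z$. The numerator $\pd{w}{\epsilon}^{j}(a_{(\alpha\lambda^{-1}\beta^{-1},\gamma,i,j)}b)^{\gamma\beta,\epsilon}(w)$ is unaffected by this substitution; the only transformation occurs in the delta factor, where Lemma \ref{lem:partial-delta} gives
\[\Del{w,\epsilon}{i}{\lambda z,w} = \lambda^{\epsilon-1}\Del{w,\epsilon}{i}{z,\lambda^{-1}w}.\]
The factor $\lambda^{\epsilon-1}$ combines with the prefactor $(\alpha\lambda^{-1})^{\epsilon-1}$ from \eqref{untwisted-lb} to produce precisely $\alpha^{\epsilon-1}$, matching the normalization on the right-hand side of \eqref{universal-twisted-lb}.

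Finally, I would sum over $\lambda\in\Gamma$ and perform the bijective substitution $\lambda\mapsto\lambda^{-1}$: this replaces $\lambda^{-1}w$ in the delta argument by $\lambda w$, and replaces the index $\alpha\lambda^{-1}\beta^{-1}$ by $\alpha\lambda\beta^{-1} = \lambda\alpha\beta^{-1}$ (using that $\Gamma\subset\C^\times$ is abelian). Reducing modulo $\g^\epsilon_\Gamma$ and recalling the notation \eqref{a^alpha(z)} for the images $a^{\alpha}(z)$ in $\g^\epsilon[\Gamma]$ then yields \eqref{universal-twisted-lb} exactly.

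I expect no serious conceptual obstacle: all the heavy structural work has been done in establishing \eqref{untwisted-lb} and in verifying that $\sigma_\lambda$ is a Lie automorphism (Lemma \ref{lem:sigma}). What remains is essentially a careful bookkeeping exercise around the delta-function identity and the reindexing of $\Gamma$. The one point meriting care is justifying the interchange of the $\lambda$-summation with the $(\gamma,i,j)$-summation, which is legitimate because for each fixed $\lambda$ the inner sum has only finitely many nonzero terms (quasi vertex Lie algebra axiom), and only finitely many $\lambda$ contribute nonzero inner sums (Lemma \ref{lem:sigma}).
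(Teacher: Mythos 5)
Your proposal is correct and follows essentially the same route as the paper: expand $[\cdot,\cdot]_\Gamma$ as $\sum_{\lambda}[a^{\alpha\lambda^{-1},\epsilon}(\lambda z),b^{\beta,\epsilon}(w)]$, apply \eqref{untwisted-lb}, use \eqref{eq:partial-delta0} to convert $\Del{w,\epsilon}{i}{\lambda z,w}$ into $\lambda^{\epsilon-1}\Del{w,\epsilon}{i}{z,\lambda^{-1}w}$ so that the prefactors combine to $\alpha^{\epsilon-1}$, and reindex $\lambda\mapsto\lambda^{-1}$. The only difference is cosmetic: you compute in $\g^\epsilon$ and then project, and you spell out the finiteness justification, whereas the paper works directly with the images in $\g^\epsilon[\Gamma]$ and leaves the convergence implicit.
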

\begin{proof}
By definition and \eqref{eq:partial-delta0}, we have
\begin{align*}
&\,[a^{\alpha}(z), b^{\beta}(w)]_{\Gamma}
=\sum_{\lambda\in \Gamma}[ a^{\alpha\lambda^{-1}}(\lambda z), b^{\beta}(w)]\\
=&\,\sum_{\lambda,\gamma\in \Gamma}\sum_{i,j\geq 0}(\lambda^{-1}\alpha)^{\epsilon-1}\beta^{(i+j)(\epsilon-1)}\(\pd{ w}{\epsilon}^{j}(a_{(\alpha\lambda^{-1}\beta^{-1},\gamma,i,j)}b)^{\gamma\beta}(w)\)\Del{w,\epsilon}{i}{\lambda z,w}\\
=&\,\sum_{\lambda,\gamma\in \Gamma}\sum_{i,j\geq 0}\alpha^{\epsilon-1}\beta^{(i+j)(\epsilon-1)}\(\pd{ w}{\epsilon}^{j}(a_{(\lambda\alpha\beta^{-1},\gamma,i,j)}b)^{\gamma\beta}(w)\)\Del{w,\epsilon}{i}{ z,\lambda w}.
\end{align*}
\end{proof}

Let $\bar{\varphi}_{\g}$ be the linear map from $\bar{\g}^\epsilon$ to $\g$ defined by
\begin{align*}
\bar{\varphi}_{\g}(\bar{a}^{\alpha,\epsilon}(m))=\alpha^{-m+\epsilon-1}a(m)
\end{align*}
for $a\in A, \alpha\in\Gamma$ and $m\in \Z$.
Note that we have
\[\bar{\varphi}_{\g}\(\(z^\epsilon\frac{\partial}{\partial z}\)^n \bar{a}^{\alpha,\epsilon}(z)\)
=\(z^\epsilon\frac{\partial}{\partial z}\)^n a(\alpha z)\] for $n\in \N, a\in A$ and $\alpha\in \Gamma$.
This implies that $\bar{\varphi}_{\g}(\ker \bar{\psi}^\epsilon)=0$ (see \eqref{intro:psizeta}) and hence $\bar{\varphi}_{\g}(\bar{\g}^\epsilon_0)=0$.
Thus, $\bar{\varphi}_{\g}$ induces
a linear map, say $\varphi_{\g}$, from $\g^\epsilon=\bar{\g}^\epsilon/\bar{\g}^\epsilon_0$ to $\g$ such that
\begin{align*}
\varphi_{\g}(a^{\alpha,\epsilon}(m))=\alpha^{-m+\epsilon-1}a(m)
\end{align*}
for $a\in A, \alpha\in\Gamma$ and $m\in \Z$.
Furthermore, for $a\in A, \lambda,\alpha\in \Gamma$ and $m\in \Z$, we have
\begin{align*}
\varphi_{\g}(\sigma_\lambda(a^{\alpha,\epsilon}(m))-a^{\alpha,\epsilon}(m))
=\varphi_{\g}(\lambda^{-m+\epsilon-1}a^{\alpha\lambda^{-1},\epsilon}(m)
-a^{\alpha,\epsilon}(m))=0,
\end{align*}
which shows that  $\varphi_{\g}$ vanishes on $\g^\epsilon_\Gamma$.
Then $\varphi_{\g}$ induces a linear map, say $\varphi_{\g,\Gamma}$, from $\g^\epsilon[\Gamma]=\g^\epsilon/\g^\epsilon_\Gamma$ to $\g$
such that
\begin{align*}
\varphi_{\g,\Gamma}(a^{\alpha}(m))=\alpha^{-m+\epsilon-1}a(m)
\end{align*} for $a\in A, \alpha\in\Gamma$ and $m\in \Z$.
We remark that the map $\varphi_{\g,\Gamma}$ is just the linear map introduced in Theorem \ref{thm:main1}\,(II).

\begin{prpt}\label{lem:varphi-surjecitve}
The map $\varphi_{\g,\Gamma}$ is a surjective Lie homomorphism.
\end{prpt}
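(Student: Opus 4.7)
\bigskip

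The plan is to first dispatch surjectivity and then reduce the homomorphism property to a direct identity between formal series. For surjectivity, note that $1 \in \Gamma$ and $\varphi_{\g,\Gamma}(a^1(m)) = 1^{-m+\epsilon-1} a(m) = a(m)$ for every $a \in A$ and $m \in \Z$. Since $\g$ is by hypothesis linearly spanned by the coefficients $a(m)$ of the generating functions in $\mathcal{A}$, this establishes surjectivity immediately.

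For the Lie homomorphism property, since $\g^\epsilon[\Gamma]$ is generated by the elements $a^{\alpha}(m)$ and both brackets are bilinear, it suffices to check that
\[
\varphi_{\g,\Gamma}\bigl([a^{\alpha}(z),b^{\beta}(w)]_\Gamma\bigr) = [a(\alpha z),b(\beta w)]
\]
as formal series, for every $a,b \in A$ and $\alpha,\beta \in \Gamma$. Note that by definition of $\varphi_{\g,\Gamma}$, the induced map on $\g^\epsilon[\Gamma][[z,z^{-1}]]$ sends a generating function $c^{\gamma}(w) = \sum_m c^{\gamma}(m) w^{-m+\epsilon-1}$ to $\sum_m \gamma^{-m+\epsilon-1} c(m) w^{-m+\epsilon-1} = c(\gamma w)$; in particular it commutes with the operator $w^\epsilon \partial/\partial w$ in the evident sense.

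The key step is then to expand $[a^{\alpha}(z),b^{\beta}(w)]_\Gamma$ via the formula \eqref{universal-twisted-lb} already proved in the excerpt, apply $\varphi_{\g,\Gamma}$ term-by-term, and compare with the direct expansion of $[a(\alpha z), b(\beta w)]$ obtained from \eqref{[a(alpha z),b(beta w)]}. Concretely, applying $\varphi_{\g,\Gamma}$ to the right-hand side of \eqref{universal-twisted-lb} produces
\[
\sum_{\lambda,\gamma\in\Gamma}\sum_{i,j\geq 0}\alpha^{\epsilon-1}\beta^{(i+j)(\epsilon-1)}\Bigl(\pd{w}{\epsilon}^{j}(a_{(\lambda\alpha\beta^{-1},\gamma,i,j)}b)(\gamma\beta w)\Bigr)\Del{w,\epsilon}{i}{z,\lambda w},
\]
whereas \eqref{[a(alpha z),b(beta w)]} combined with the delta function relation $\Del{w,\epsilon}{i}{\alpha z,\mu\beta w}=\alpha^{\epsilon-1}\Del{w,\epsilon}{i}{z,\alpha^{-1}\mu\beta w}$ from \eqref{eq:partial-delta0} in Lemma \ref{lem:partial-delta} rewrites $[a(\alpha z),b(\beta w)]$ in exactly the same form after the re-indexing $\lambda = \alpha^{-1}\mu\beta$ (which is a bijection of $\Gamma$ since $\alpha,\beta\in\Gamma$).

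The main obstacle is purely one of bookkeeping — keeping track of the scalar factors $\alpha^{\epsilon-1}$, $\beta^{(i+j)(\epsilon-1)}$, the shift argument $\gamma\beta w$, and the change-of-variable in the summation over $\Gamma$. No new Lie-theoretic identity is required beyond the skew-symmetry/Jacobi computations already subsumed by the commutator formula \eqref{universal-twisted-lb}; the proposition is in the end the assertion that after quotienting by $\g^\epsilon_\Gamma$ the Lie relations of $\g^\epsilon$ under $[\cdot,\cdot]_\Gamma$ collapse to precisely those of $\g$ under the correspondence $a^\alpha(z)\mapsto a(\alpha z)$.
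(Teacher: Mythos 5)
Your proof is correct and follows essentially the same route as the paper: surjectivity from the spanning hypothesis on $\mathcal{A}$, and the homomorphism property by applying $\varphi_{\g,\Gamma}$ to the commutator formula \eqref{universal-twisted-lb} and matching the result against the Lie bracket of $\g$. The only (inessential) difference is that the paper first uses $a^\alpha(z)=\sigma_\alpha(a^\alpha(z))=a^1(\alpha z)$ to reduce to the case $\alpha=\beta=1$, where the comparison with \eqref{[a,b]} is immediate, whereas you verify the general case directly via the re-indexing $\lambda=\alpha^{-1}\mu\beta$ together with \eqref{eq:partial-delta0}; both are valid.
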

\begin{proof}
The map $\varphi_{\g,\Gamma}$ is surjective as $\g$ is  spanned by the elements $a(m)$ for  $a\in A$ and $m\in \Z$.
Note that for $a\in A$ and $\alpha\in \Gamma$, we have
$a^\alpha(z)=\sigma_\alpha(a^\alpha(z))=a^1(\alpha z)$. This gives that
\begin{align}\label{eq:spangeg}
 \g^\epsilon[\Gamma]=\text{Span}\{a^1(m)\mid a\in A, m\in \Z\}.
 \end{align}
Furthermore, from \eqref{universal-twisted-lb} we have
\begin{align*}
\varphi_{\g,\Gamma}([a^{1}(z),b^{1}(w)]_\Gamma)
=&\sum_{\lambda,\gamma\in \Gamma}\sum_{i,j\geq 0}\(\pd{ w}{\epsilon}^{j}(a_{(\lambda,\gamma,i,j)}b)(\gamma w)\)\Del{w,\epsilon}{i}{ z,\lambda w}\\
=&[a(z),b(w)]=[\varphi_{\g,\Gamma}(a^{1}(z)),\varphi_{\g,\Gamma}(b^{1}(w))]
\end{align*}
for any  $a,b\in A$. This proves that $\varphi_{\g,\Gamma}$ is a surjective Lie homomorphism.
\end{proof}

To be continuous, here we explain the definition of maximality of $\g$ given in the introduction.
Let $\tilde{\g}$ be the complex vector space with a basis
\[\{\tilde{a}(m)\mid a\in A, m\in \Z\}.\]
For $a\in A$, set $\tilde{a}(z)=\sum_{m\in \Z} \tilde{a}(m)z^{-m+\epsilon-1}$.
Let $\tilde{\mathcal{A}}$ be the subspace of $\tilde{\g}[[z,z^{-1}]]$ spanned by the (linearly independent) elements
$\(z^\epsilon\frac{\partial}{\partial z}\)^n \tilde{a}(\alpha z)$ for $n\in \N, a\in A$ and $\alpha\in \Gamma$, and let
 $\tilde{\varphi}:\tilde{\mathcal{A}}\rightarrow \g[[z,z^{-1}]]$ be the linear map defined by
\[\tilde{\varphi}\(\(z^\epsilon\frac{\partial}{\partial z}\)^n \tilde{a}(\alpha z)\)
=\(z^\epsilon\frac{\partial}{\partial z}\)^n a(\alpha z)\]
for $n\in \N, a\in A$ and $\alpha\in \Gamma$.
Define $\tilde{\g}_0$ to be the subspace of $\tilde{\g}$ spanned by the coefficients of generating functions in $\ker \tilde{\varphi}$.
Then it is clear that $\g$ is maximal if and only if
the canonical surjective map
\begin{align}\label{eq:canmaxi} \tilde{\g}/\tilde{\g}_0\rightarrow \g,\quad \tilde{a}(m)+\tilde{\g}_0\mapsto a(m)\quad (a\in A,m\in\Z)\end{align}
is an isomorphism. We also give a criterion for the maximality of $\g$ as follows.

\begin{remt}\label{maximal-equi} Let $\CR$ be a subset of $\ker \tilde{\varphi}$, and let $\tilde{\g}_{\CR}$ be the subspace of $\tilde{\g}$ spanned
by the coefficients of the generating functions in $\CR$.
Consider the canonical surjective  map
\begin{align} \label{eq:canmaxiR}\tilde{\g}/\tilde{\g}_{\CR}\rightarrow \g,\quad \tilde{a}(m)+\tilde{\g}_{\CR}\mapsto a(m)\quad (a\in A,m\in\Z),\end{align}
 which is the composition of the canonical surjective map $\tilde{\g}/\tilde{\g}_{\CR}\rightarrow \tilde{\g}/\tilde{\g}_0$ and
\eqref{eq:canmaxi}.
Thus, if \eqref{eq:canmaxiR} is an isomorphism,  then  \eqref{eq:canmaxi} is also an isomorphism and hence $\g$ is maximal.
 \end{remt}

The following result together with Proposition \ref{lem:varphi-surjecitve} give a necessary and sufficient condition for the homomorphism $\varphi_{\g,\Gamma}$ to be an isomorphism.

\begin{prpt}\label{lem:maximal-eq-injective}
The homomorphism $\varphi_{\g,\Gamma}$ is injective if and only if $\g$ is maximal.
\end{prpt}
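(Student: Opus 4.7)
The plan is to factor $\varphi_{\g,\Gamma}$ through the canonical surjection \eqref{eq:canmaxi} by constructing a canonical linear isomorphism $\bar{\pi}:\tilde{\g}/\tilde{\g}_0\to\g^\epsilon[\Gamma]$ such that $\varphi_{\g,\Gamma}\circ\bar{\pi}$ coincides with \eqref{eq:canmaxi}. Once this is in place, the equivalence ``$\varphi_{\g,\Gamma}$ injective $\Leftrightarrow$ $\g$ maximal'' is immediate from the definition of maximality, since maximality is precisely the condition that \eqref{eq:canmaxi} be an isomorphism (and $\bar\pi$ will be bijective unconditionally).

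I would define $\bar{\pi}$ by $\tilde{a}(m)+\tilde{\g}_0\mapsto a^1(m)$. To check this is well-defined, take $u(z)=\sum_i\mu_i\(z^\epsilon\frac{\partial}{\partial z}\)^{n_i}\tilde{a_i}(\alpha_iz)\in\ker\tilde{\varphi}$. Then $\sum_i\mu_i\(z^\epsilon\frac{\partial}{\partial z}\)^{n_i}a_i(\alpha_iz)=0$ in $\g[[z,z^{-1}]]$, so $\sum_i\mu_i\(z^\epsilon\frac{\partial}{\partial z}\)^{n_i}\bar{a_i}^{\alpha_i,\epsilon}(z)\in\ker\bar{\psi}^\epsilon$, whose coefficients lie in $\bar{\g}^\epsilon_0$. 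Hence $\sum_i\mu_i\(z^\epsilon\frac{\partial}{\partial z}\)^{n_i}a_i^{\alpha_i,\epsilon}(z)=0$ in $\g^\epsilon[[z,z^{-1}]]$; projecting to $\g^\epsilon[\Gamma][[z,z^{-1}]]$ and using the identity $a^\alpha(z)=a^1(\alpha z)$ (which holds in $\g^\epsilon[\Gamma]$ by specializing the defining relations of $\g^\epsilon_\Gamma$ at $\lambda=\alpha$), we obtain $\sum_i\mu_i\(z^\epsilon\frac{\partial}{\partial z}\)^{n_i}a_i^1(\alpha_iz)=0$, so the coefficients of $u(z)$ are sent to zero. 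Surjectivity of $\bar{\pi}$ is then immediate from \eqref{eq:spangeg}.

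For the inverse, I would construct $\iota:\g^\epsilon[\Gamma]\to\tilde{\g}/\tilde{\g}_0$ by $a^\alpha(m)\mapsto\alpha^{-m+\epsilon-1}\tilde{a}(m)+\tilde{\g}_0$, via a two-step descent: first a linear map $\bar{\iota}:\bar{\g}^\epsilon\to\tilde{\g}/\tilde{\g}_0$ given by the same formula on $\bar{a}^{\alpha,\epsilon}(m)$, which at the level of generating functions sends $\bar{a}^{\alpha,\epsilon}(z)$ to $\tilde{a}(\alpha z)\pmod{\tilde{\g}_0}$. This shows $\bar{\iota}$ intertwines $\bar{\psi}^\epsilon$ with $\tilde{\varphi}$, so $\bar{\iota}(\ker\bar{\psi}^\epsilon)\subseteq\ker\tilde{\varphi}\pmod{\tilde{\g}_0[[z,z^{-1}]]}$ and thus $\bar{\iota}(\bar{\g}^\epsilon_0)=0$. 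The induced map on $\g^\epsilon$ vanishes on $\g^\epsilon_\Gamma$ because $\lambda^{-m+\epsilon-1}(\alpha\lambda^{-1})^{-m+\epsilon-1}=\alpha^{-m+\epsilon-1}$. Direct computation gives $\bar{\pi}\circ\iota(a^\alpha(m))=\alpha^{-m+\epsilon-1}a^1(m)=a^\alpha(m)$ (the last equality again using $\g^\epsilon_\Gamma$ at $\lambda=\alpha$) and $\iota\circ\bar{\pi}(\tilde{a}(m)+\tilde{\g}_0)=\tilde{a}(m)+\tilde{\g}_0$, so $\bar{\pi}$ is an isomorphism. Since $\varphi_{\g,\Gamma}\circ\bar{\pi}(\tilde{a}(m)+\tilde{\g}_0)=a(m)$ agrees with \eqref{eq:canmaxi}, the conclusion follows.

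The main obstacle to be threaded through the argument is bookkeeping the interplay among the three kernel subspaces $\bar{\g}^\epsilon_0\subset\bar{\g}^\epsilon$, $\tilde{\g}_0\subset\tilde{\g}$, and $\g^\epsilon_\Gamma\subset\g^\epsilon$: the two maps $\bar{\pi}$ and $\iota$ are natural, but their well-definedness requires showing that a relation among $(z^\epsilon\frac{\partial}{\partial z})^na(\alpha z)$ holding in $\g[[z,z^{-1}]]$ lifts in a controlled way simultaneously into $\g^\epsilon[\Gamma]$ and $\tilde{\g}/\tilde{\g}_0$, with the identification $a^\alpha(z)=a^1(\alpha z)$ in $\g^\epsilon[\Gamma]$ serving as the bridge between the ``all $\alpha$'s'' presentation used in $\tilde{\g}$ and the ``$\alpha=1$'' spanning set for $\g^\epsilon[\Gamma]$.
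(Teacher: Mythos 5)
Your proposal is correct and follows essentially the same route as the paper: you construct the same linear isomorphism $\tilde{\g}/\tilde{\g}_0\to\g^\epsilon[\Gamma]$, $\tilde{a}(m)+\tilde{\g}_0\mapsto a^1(m)$ (the paper's $\chi$), build its inverse by the same two-step descent through $\bar{\g}^\epsilon$ and $\g^\epsilon$ using the identity $a^\alpha(z)=a^1(\alpha z)$, and then factor the canonical map \eqref{eq:canmaxi} as $\varphi_{\g,\Gamma}$ composed with this isomorphism. The kernel bookkeeping you carry out is exactly the content of the paper's proof.
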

\begin{proof}
By \eqref{eq:spangeg}, we have a surjective linear map
\begin{align*}
\tilde\chi:\tilde\g\rightarrow \g^\epsilon[\Gamma],
\quad \tilde{a}(m)\mapsto a^1(m)\quad (a\in A, m\in \Z).
\end{align*}
Note that for $a\in A$ and $\alpha\in \Gamma$, one has
\begin{align*}
\tilde\chi(\tilde{a}(\alpha z))=a^1(\alpha z)= a^{\alpha}(z).
\end{align*}
This together with  \eqref{intro:psizeta} gives that $\tilde{\chi}(\ker\tilde\varphi)=0$.
Thus $\tilde\chi$ induces a linear map
\begin{align*}
\chi:\tilde\g/\tilde\g_0\rightarrow \g^\epsilon[\Gamma],\quad \tilde{a}(m)+\tilde\g_0\mapsto a^1(m).
\end{align*}

We claim that $\chi$ is a linear isomorphism. In fact,
 we consider the linear map
\begin{align*}
\bar\eta: \bar\g^\epsilon\rightarrow \tilde\g/\tilde\g_0,\quad \bar{a}^{\alpha,\epsilon}(m)\mapsto \alpha^{-m+\epsilon-1}\tilde{a}(m)+\tilde\g_0
\quad (a\in A, \alpha\in\Gamma, m\in \Z).
\end{align*}
Equivalently,  we have $\bar\eta(\bar{a}^{\alpha,\epsilon}(z))=\tilde{a}(\alpha z)$ for $a\in A$ and $\alpha\in \Gamma$.
This together with the definition of $\ker\tilde\varphi$ gives that  $\bar\eta(\ker \bar\psi^\epsilon)=0$ (see  \eqref{intro:psizeta}).
Thus $\bar\eta$ induces  a linear map
\begin{align*}
\eta: \g^\epsilon\rightarrow \tilde\g/\tilde\g_0,\quad a^{\alpha,\epsilon}(m)\mapsto \alpha^{-m+\epsilon-1}\tilde{a}(m)+\tilde\g_0.
\end{align*}
Note that for $a\in A$ and $\alpha,\lambda\in \Gamma$, we have
\begin{align*}
\eta(\sigma_\lambda(a^{\alpha,\epsilon}(z)))=\eta(a^{\alpha\lambda^{-1},\epsilon}(\lambda z))=\tilde{a}(\alpha\lambda^{-1}\lambda z)
=\tilde{a}(\alpha z)=\eta(a^{\alpha,\epsilon}(z)).
\end{align*}
Thus $\eta$ factors through $\g^\epsilon_\Gamma$ and  yields a  linear map
\begin{align*}
\eta_{\Gamma}:\g^\epsilon[\Gamma]\rightarrow \tilde\g/\tilde\g_0,\quad a^{\alpha}(m)\mapsto \alpha^{-m+\epsilon-1}\tilde{a}(m)+\tilde\g_0.
\end{align*}
It is clear that $\chi$ and $\eta_\Gamma$ are mutually invertible, which proves the claim.

Note that  the map \eqref{eq:canmaxi} is the composition of the maps $\chi$ and $\varphi_{\g,\Gamma}$.
Then the proposition follows from the Proposition  \ref{lem:varphi-surjecitve} and  $\chi$ is an isomorphism.
\end{proof}

 Propositions \ref{lem:thm1-2}, \ref{lem:varphi-surjecitve} and \ref{lem:maximal-eq-injective} give the Theorem \ref{thm:main1}\,(II).

\section{Proof of Theorem \ref{thm:main2}}

In this section, we present the proof of  Theorem \ref{thm:main2}.
As before, throughout this section, let $(\g,\CA,\epsilon)$ be a quasi vertex Lie algebra with $\Gamma$ the associated group.

\subsection{Proof of Theorem \ref{thm:main2} (I)}

We start with  the following notion introduced in \cite{DLM}.
\begin{dfnt}{\em  A {\em vertex Lie algebra} is a quadruple $(\CL,U,d,\rho)$ consisting of a Lie algebra $\CL$, a vector space $U$,
 a partially defined linear map $d$ from $U$ to $U$,
 and a linear map $\rho$ from $U\ot\C[t,t^{-1}]$ onto $\CL$ such that $\ker\rho=\te{Im} (d\ot 1+ 1\ot \frac{d}{dt})$, and that for any $a,b\in U$, there exist
   finitely many vectors $c_{i,j}, i,j=0,1,\dots,k$ in $U$ (depending on $a,b$),  such that
 \begin{equation}\label{eq:localvla}
[a(z),b(w)]=\sum_{i,j=0}^k \frac{1}{i!}\(\(\frac{\partial}{\partial w}\)^j c_{i,j}(w)\)\(\frac{\partial}{\partial w}\)^iz^{-1}\delta\(\frac{w}{z}\),
\end{equation}
 where $a(z)=\sum_{n\in\Z}\rho(a\ot t^n) z^{-n-1}$.}
\end{dfnt}
Let $(\CL,U,d,\rho)$ be a vertex Lie algebra.  Set
\begin{align}\label{eq:defL+}
\CL_-=\te{Span}\{\rho(a\ot t^n)\mid a\in U,n<0\}\quad\te{and}\quad \CL_+=\te{Span}\{\rho(a\ot t^n)\mid a\in U,n\geq0\}.\end{align}
Then both $\CL_-$ and $\CL_+$  are Lie subalgebras of $\CL$, and $\CL=\CL_-\oplus \CL_+$.
Write $\C$ for the one-dimensional trivial $\CL_+$-module, and form the induced $\CL$-module
\[V_{\CL}=\U(\CL)\ot_{\U(\CL_+)}\C.\]
Then $U$ can be identified as a subspace of $V_{\CL}$ through the  map $a\mapsto \rho(a\ot t^{-1})\vac$, where $\vac=1\ot 1$.
Furthermore, we have the following results from \cite{DLM}.

\begin{prpt}\label{prop:vavl} Let $(\CL,U,d,\rho)$ be a vertex Lie algebra.  Then there is a unique vertex algebra structure on $V_{\CL}$ with $\vac$ as the vacuum vector and with
$U$ as a generating set such that
$Y(a,z)=a(z)$ for $a\in U$.
\end{prpt}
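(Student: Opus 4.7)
The plan is to realize the vertex algebra on $V_{\CL}$ by producing a local collection of fields and then invoking Li's vertex algebra construction from local systems of fields. First I would observe that $V_{\CL}$ is naturally an $\CL$-module by construction, so each element $a\in U$ yields a formal series $a(z)=\sum_{n\in\Z}\rho(a\ot t^n)z^{-n-1}\in(\End V_{\CL})[[z,z^{-1}]]$. The truncation property $a(z)v\in V_{\CL}((z))$ for $v\in V_{\CL}$ must be checked: one filters $V_{\CL}$ by the standard PBW filtration induced from a grading on $\CL$ in which $\rho(a\ot t^n)$ has degree $-n$; then $\CL_+$ acts in a way that lowers degree and hence, on any fixed vector, $\rho(a\ot t^n)$ acts as zero for $n$ sufficiently large.

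The heart of the argument is to establish mutual locality of $\{a(z)\mid a\in U\}$ on $V_{\CL}$. Starting from the defining commutator formula \eqref{eq:localvla}, one multiplies both sides by $(z-w)^{k+1}$ for $k$ as in \eqref{eq:localvla} and uses that $(z-w)^{i+1}\bigl(\partial/\partial w\bigr)^i z^{-1}\delta(w/z)=0$ for $i\le k$, which kills every term on the right-hand side. Thus $(z-w)^{k+1}[a(z),b(w)]=0$ as elements of $(\End V_{\CL})[[z^{\pm 1},w^{\pm 1}]]$. With locality in hand, Li's theorem (the standard local-system-of-fields theorem) produces a vertex algebra structure on the vector subspace of $(\End V_{\CL})[[z,z^{-1}]]$ generated by $1_{V_{\CL}}$ and these fields, with vacuum $1_{V_{\CL}}$ and the state-field correspondence determined by $Y(a,z)=a(z)$ for $a\in U$.

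Next I would transport this structure to $V_{\CL}$ itself via the creation map $X\mapsto X(z)\vac|_{z=0}$, verifying by the standard argument that it is bijective: surjectivity follows because $V_{\CL}$ is spanned, as a consequence of PBW on $\CL_-$, by vectors of the form $\rho(a_1\ot t^{-n_1-1})\cdots\rho(a_r\ot t^{-n_r-1})\vac$, each of which is a normally ordered product of derivatives of the fields $a_i(z)$ applied to $\vac$; injectivity uses that distinct PBW monomials, together with the identification $\ker\rho=\mathrm{Im}(d\ot 1+1\ot\frac{d}{dt})$, correspond under the creation map to linearly independent elements, which requires a careful bookkeeping tying the relation $d$ on $U$ to the translation operator. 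I expect this identification step—reconciling the PBW basis of $V_{\CL}$ with the vertex-algebra structure produced abstractly from the fields—to be the main obstacle, since it is where the specific relations of the vertex Lie algebra (especially the role of $d$) have to match the internal translation operator of the vertex algebra.

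Finally, uniqueness is easy: since $U$ generates $V_{\CL}$ as an $\CL$-module and the subspace $U\subset V_{\CL}$ is identified via $a\mapsto \rho(a\ot t^{-1})\vac$, any vertex algebra structure on $V_{\CL}$ with vacuum $\vac$ satisfying $Y(a,z)=a(z)$ for $a\in U$ is determined on $U$, and then extended uniquely to all of $V_{\CL}$ by the iterate (weak associativity) formula $Y(\rho(a\ot t^{-n-1})u,z)=\tfrac{1}{n!}\bigl(\partial/\partial z\bigr)^n Y(a,z) Y(u,z)$ in the normally ordered sense, combined with the identity $Y(\vac,z)=1$.
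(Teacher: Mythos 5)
The paper does not prove this proposition at all: it is imported directly from \cite{DLM} (``we have the following results from \cite{DLM}''; see also \cite{P} and \cite{Li1}), so there is no internal proof to compare against. Your outline is precisely the argument used in those references --- exhibit $V_{\CL}$ as a restricted $\CL$-module, verify truncation and mutual locality of the generating fields, apply Li's local-system theorem, and transport the resulting vertex algebra structure back to $V_{\CL}$ through the creation map --- and it is correct in structure, with uniqueness handled the standard way via the normally ordered iterate formula. Two points deserve repair. First, $\CL$ is in general \emph{not} graded by declaring $\rho(a\ot t^n)$ to have degree $-n$: extracting coefficients in \eqref{eq:localvla} shows that $[\rho(a\ot t^m),\rho(b\ot t^n)]$ lies in the span of the $\rho(c_{i,j}\ot t^{m+n-i-j})$ with $0\le i,j\le k$, which is not homogeneous, so the filtration you invoke does not exist as stated; truncation should instead be proved by induction on the length of a PBW monomial, using that commuting $\rho(a\ot t^n)$ past one factor shifts the $t$-exponent by at most $2k$ and that $\CL_+$ annihilates $\vac$. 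Second, for the bijectivity of the creation map the clean route is not linear-independence bookkeeping on PBW monomials but the universal property of the induced module: the local system $\langle S\rangle$ generated by the fields is itself a restricted $\CL$-module on which $\CL_+$ kills $1_{V_{\CL}}$ --- here the hypothesis $\ker\rho=\te{Im}(d\ot 1+1\ot\frac{d}{dt})$ is exactly what makes the mode action well defined, since it reduces to the identity $(\partial_z a(z))_n=-n\,a(z)_{n-1}$ in any local system --- so the universal property furnishes an $\CL$-module map $V_{\CL}\rightarrow\langle S\rangle$ inverse to evaluation at $\vac$. With those two adjustments your proof is the standard one.
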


Recall that $\g^0$ is the Lie algebra $\g^\zeta$ in Theorem \ref{thm:main1} for $\zeta=0$. We show that the Lie algebra $\g^0$ admits a vertex Lie algebra structure.
Let $\C[d]$ be the polynomial ring with the  variable $d$, and let $\widetilde{U}$ be a free $\C[d]$-module equipped with  a $\C[d]$-basis
\[\{\tilde{a}^{\alpha,0} \mid a\in A, \alpha\in \Gamma\}.\]
We view  $\g[[z,z^{-1}]]$ as a $\C[d]$-module such that
 \[d(u(z))=z^{\epsilon}\frac{\partial}{\partial z} u(z)\quad \text{for}\ u(z)\in \g[[z,z^{-1}]],\] and let
$\Psi:\widetilde{U}\rightarrow \g[[z,z^{-1}]]$ be the
 $\C[d]$-module homomorphism defined by
 \[\Psi(\tilde{a}^{\alpha,0})=a(\alpha z)\quad\text{for}\ a\in A, \alpha\in \Gamma.\]
Similarly, we view $\g^0[[z,z^{-1}]]$ as a $\C[d]$-module such that
 \[d( u(z))=\frac{\partial}{\partial z} u(z)\quad \text{for}\ u(z)\in \g^0[[z,z^{-1}]],\]  and let
$\Phi:\widetilde{U}\rightarrow \g^0[[z,z^{-1}]]$ be the
 $\C[d]$-module homomorphism defined by
 \[\Phi(\tilde{a}^{\alpha,0})=a^{\alpha,0}(z)\quad\text{for}\ a\in A, \alpha\in \Gamma.\]

\begin{lemt}\label{lemma:prerho} One has $\ker \Psi\subset \ker \Phi$.
\end{lemt}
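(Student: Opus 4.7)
The plan is to exhibit a $\C$-linear identification of $\widetilde{U}$ with the subspace $\bar{\mathcal{A}}^0 \subset \bar{\g}^0[[z,z^{-1}]]$ introduced in Section 2, under which $\Psi$ becomes the map $\bar{\psi}^0$ from \eqref{intro:psizeta} and $\Phi$ becomes the map induced by the quotient $\bar{\g}^0 \twoheadrightarrow \g^0 = \bar{\g}^0/\bar{\g}^0_0$ applied coefficient-wise. Once this is in place, the lemma is essentially a one-line consequence of the very \emph{definition} of $\bar{\g}^0_0$.

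More concretely, $\widetilde{U}$ has $\C$-basis $\{d^n \tilde{a}^{\alpha,0} \mid a\in A,\ \alpha\in \Gamma,\ n\in \N\}$, while $\bar{\mathcal{A}}^0$ has $\C$-basis $\{(z^0\partial_z)^n \bar{a}^{\alpha,0}(z) \mid a\in A,\ \alpha\in \Gamma,\ n\in \N\}$ (the linear independence being part of the definition of $\bar{\mathcal{A}}^\zeta$ preceding \eqref{intro:psizeta}). I would define the $\C$-linear bijection
\[
\iota:\widetilde{U}\longrightarrow \bar{\mathcal{A}}^0,\qquad d^n \tilde{a}^{\alpha,0}\longmapsto \(\frac{\partial}{\partial z}\)^n \bar{a}^{\alpha,0}(z),
\]
and then verify on basis elements that $\bar{\psi}^0\circ \iota=\Psi$ (both sides give $\(z^\epsilon\tfrac{\partial}{\partial z}\)^n a(\alpha z)$) and that the composition of $\iota$ with the coefficient-wise quotient $\bar{\g}^0[[z,z^{-1}]]\twoheadrightarrow\g^0[[z,z^{-1}]]$ equals $\Phi$ (both sides give $\(\tfrac{\partial}{\partial z}\)^n a^{\alpha,0}(z)$).

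With these identifications, the conclusion follows immediately: if $u\in \ker\Psi$ then $\iota(u)\in\ker\bar{\psi}^0$, so each coefficient of $\iota(u)\in \bar{\g}^0[[z,z^{-1}]]$ lies in $\bar{\g}^0_0$ by the defining property of $\bar{\g}^0_0$ as the span of such coefficients. Passing to the quotient $\g^0$, every coefficient becomes zero, hence the image of $\iota(u)$ in $\g^0[[z,z^{-1}]]$ vanishes; but this image is precisely $\Phi(u)$, so $u\in\ker\Phi$.

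I do not anticipate any real obstacle: the substantive work was already done in Section 2 when we declared $\bar{\g}^0_0$ to contain exactly the coefficients of elements of $\ker\bar{\psi}^0$. The only thing to be careful about in writing this up is the bookkeeping with the two different $\C[d]$-module structures (with $d$ acting as $z^\epsilon\partial_z$ on the $\Psi$-side versus $\partial_z$ on the $\Phi$-side), but after translating everything to the common intermediary $\bar{\mathcal{A}}^0$ this distinction disappears and the proof reduces to the tautological observation above.
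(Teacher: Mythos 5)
Your proposal is correct and follows essentially the same route as the paper: the paper likewise sends $u=\sum_i\mu_i d^{n_i}\widetilde{a_i}^{\alpha_i,0}\in\ker\Psi$ to the element $\sum_i\mu_i\(\frac{\partial}{\partial z}\)^{n_i}\overline{a_i}^{\alpha_i,0}(z)\in\bar{\mathcal{A}}^0$, observes that it lies in $\ker\bar\psi^0$ because $\bar\psi^0$ of it equals $\Psi(u)=0$, and concludes $\Phi(u)=0$ from the definition of $\bar{\g}^0_0$. Your version merely packages this computation as the explicit identification $\iota$; the content is identical.
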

\begin{proof} Let $u\in\ker \Psi$. We can write
\[u=\sum_{i=1}^k \mu_i d^{n_i}\widetilde{a_i}^{\alpha_i,0}\quad (\mu_i\in \C,
n_i\in \N, a_i\in A, \alpha_i\in \Gamma).\]
By \eqref{intro:psizeta} we have
\begin{align*}
\bar\psi^0\(\sum_{i=1}^k \mu_i \(\frac{\partial}{\partial z}\)^{n_i} \overline{a_i}^{\alpha_i,0}(z)\)= \sum_{i=1}^k \mu_i \(z^\epsilon\frac{\partial}{\partial z}\)^{n_i} a_i(\alpha_i z)=\Psi(u)=0.
\end{align*}
This implies that  $\Phi(u)=\sum_{i=1}^k \mu_i \(\frac{\partial}{\partial z}\)^{n_i} a_i^{\alpha_i,0}(z)$ $=0$, as required.
\end{proof}

Form the quotient $\C[d]$-module
\[U=\widetilde{U}/\ker \Psi.\]
Let $\tilde{\rho}$ be the linear map from $\widetilde{U}\ot \C[t,t^{-1}]$ to $\g^0$ defined by
\begin{align*}
\sum_{m\in \Z} \tilde{\rho}(u\ot t^m) z^{-m-1}=\Phi(u)\quad\text{for}\ u\in \widetilde{U}.
\end{align*}
In view of Lemma \ref{lemma:prerho}, we have $\ker \Psi\ot \C[t,t^{-1}]\subset \ker \tilde{\rho}$,  so $\tilde{\rho}$ induces a linear map
\begin{align*}
\rho:U\ot \C[t,t^{-1}]\cong (\widetilde{U}\ot \C[t,t^{-1}])/ (\ker\Psi\ot \C[t,t^{-1}])\rightarrow \g^0
\end{align*}
such that
\begin{align}\label{eq:defrho}
\rho \((d^n a^{\alpha,0})\ot t^m\)=\Res_z z^m\(\frac{\partial}{\partial z}\)^na^{\alpha,0}(z)=(-1)^nn! {m\choose n} a^{\alpha,0}(m-n)
\end{align}
for $n\in \N, a\in A,\alpha\in\Gamma$ and $ m\in\Z$, where $a^{\alpha,0}$ denotes the image of $\tilde{a}^{\alpha,0}$ in $U$.
In terms of the generating functions, \eqref{eq:defrho} is equivalent to
\begin{align}
\sum_{m\in\Z}\rho\((d^n a^{\alpha,0})\ot t^m\)z^{-m-1}=\(\frac{\partial}{\partial z}\)^na^{\alpha,0}(z).
\end{align}
With the above definitions,  we have the following result.

\begin{lemt}\label{lem:thm2-1}The quadruple $(\g^0,U,d,\rho)$ is a vertex Lie algebra.
\end{lemt}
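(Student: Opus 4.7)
The plan is to verify the three conditions in the definition of a vertex Lie algebra for $(\g^0, U, d, \rho)$: surjectivity of $\rho$, the kernel condition $\ker \rho = \te{Im}(d\ot 1 + 1\ot \frac{d}{dt})$, and the local commutator formula \eqref{eq:localvla} for every pair of elements of $U$.

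Surjectivity of $\rho$ is immediate, since $\rho(a^{\alpha,0}\ot t^m) = a^{\alpha,0}(m)$ and these elements span $\g^0$. The locality axiom follows by specializing \eqref{untwisted-lb} to $\zeta = 0$: because $w^0\partial/\partial w = \partial/\partial w$, the commutator $[a^{\alpha,0}(z), b^{\beta,0}(w)]$ already has the precise shape required by \eqref{eq:localvla}, with a finite sum guaranteed by the definition of a quasi vertex Lie algebra and with constants $c_{i,j} = \alpha^{\epsilon-1}\beta^{(i+j)(\epsilon-1)}\sum_\gamma (a_{(\alpha\beta^{-1},\gamma,i,j)}b)^{\gamma\beta,0}\in U$. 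Since $U$ is generated by $\{a^{\alpha,0}\}$ as a $\C[d]$-module and both sides of \eqref{eq:localvla} transform compatibly under $\partial_z$ and $\partial_w$, this suffices for all pairs.

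For the kernel condition, the inclusion $\te{Im}(d\ot 1 + 1\ot \frac{d}{dt}) \subset \ker \rho$ is a short residue computation using
\[\rho(du\ot t^m) = \Res_z z^m \frac{\partial}{\partial z}\Phi(u)(z) = -m\,\Res_z z^{m-1}\Phi(u)(z) = -m\,\rho(u\ot t^{m-1}).\]

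The reverse inclusion is the main obstacle, and I would prove it by constructing an explicit left inverse to the canonical surjection $\pi: N \twoheadrightarrow \g^0$ induced by $\rho$, where $N := (U\ot \C[t,t^{-1}])/\te{Im}(d\ot 1 + 1\ot \frac{d}{dt})$. In $N$, iterating the defining relation yields $d^n u\ot t^m \equiv (-1)^n n!\binom{m}{n}\, u\ot t^{m-n}$. I would first define $\bar{g}: \bar{\g}^0 \to N$ by $\bar{a}^{\alpha,0}(m)\mapsto a^{\alpha,0}\ot t^m$, and then verify it annihilates the subspace $\bar{\g}^0_0$. Concretely, for $u(z) = \sum_i \mu_i(\partial/\partial z)^{n_i}\overline{a_i}^{\alpha_i,0}(z)\in \ker\bar{\psi}^0$, the coefficient of $z^{-m-1}$ is sent to $\sum_i \mu_i(-1)^{n_i}n_i!\binom{m}{n_i}\,a_i^{\alpha_i,0}\ot t^{m-n_i}$, which in $N$ equals $\bigl(\sum_i \mu_i d^{n_i}a_i^{\alpha_i,0}\bigr)\ot t^m$. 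The hypothesis $u(z)\in \ker\bar{\psi}^0$ says exactly that $\sum_i \mu_i d^{n_i}\tilde{a_i}^{\alpha_i,0}\in \ker\Psi$, so its class in $U$ is zero. Thus $\bar{g}$ descends to $g: \g^0 \to N$, and $\pi\circ g = \te{id}$ on the spanning set $\{a^{\alpha,0}(m)\}$, forcing $\pi$ to be injective and completing the verification.
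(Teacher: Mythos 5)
Your proposal is correct and follows essentially the same route as the paper's own proof: surjectivity and locality are read off from \eqref{untwisted-lb}, the inclusion $\te{Im}(d\ot 1+1\ot \frac{d}{dt})\subset\ker\rho$ is the same residue computation, and the reverse inclusion is established exactly as in the paper by lifting a section $\bar{\g}^0\to (U\ot\C[t,t^{-1}])/K$ and checking it kills $\ker\bar\psi^0$ (your map $\bar{g}$ is the paper's $\bar\pi$, written coefficient-wise rather than via generating functions). No substantive difference.
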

\begin{proof} By \eqref{untwisted-lb}, the commutator $[a(z),b(w)]$ $(a,b\in U)$ has the desired
form as  \eqref{eq:localvla}.
Since  $\rho$ is surjective, it remains to prove that $\ker\rho=\te{Im}(d\ot 1+1\ot \frac{d}{dt})$.

Set $K=\te{Im}(d\ot 1+1\ot \frac{d}{dt})$. For $u\in \widetilde{U},m\in\Z$, we have
\begin{align*}
 &\tilde{\rho}\(d u\ot t^m\) =\Res_z z^m\Phi(du)=\Res_z z^m\frac{\partial}{\partial z}(\Phi(u))\\
 =&-\Res_z \(\frac{\partial}{\partial z}z^m\)\Phi(u)=-m\Res_z z^{m-1}\Phi(u)=-m\tilde{\rho}(u\ot t^{m-1}).
 \end{align*}
This implies that $K\subset\ker\rho$. Then $\rho$ induces a surjective  map
\[\rho_K:(U\ot\C[t,t^{-1}])/K\rightarrow \g^0,\quad a^{\alpha,0}\ot t^m+K\mapsto a^{\alpha,0}(m)\quad (a\in A,\alpha\in \Gamma,m\in \Z).\]

On the other hand, we consider the following linear map:
\[\bar\pi:\bar\g^0\rightarrow U\ot\C[t,t^{-1}]/K,\quad \bar{a}^{\alpha,0}(m)\mapsto a^{\alpha,0}\ot t^m+K\quad (a\in A,\alpha\in \Gamma,m\in \Z).\]
We claim that $\ker\bar\psi^0\subset \ker \bar\pi$ (see \eqref{intro:psizeta}).
Indeed, fix a vector
\[v(z)=\sum_{i=1}^k \mu_i \(\frac{\partial}{\partial z}\)^{n_i} \overline{a_i}^{\alpha_i,0}(z)\in \ker \bar\psi^0\quad (\mu_i\in \C,n_i\in \N,a_i\in A,\alpha_i\in\Gamma).\]
Then  we have
\[\Psi\(\sum_{i=1}^k \mu_i d^{n_i}\widetilde{a_i}^{\alpha_i,0}\)=\sum_{i=1}^k \mu_i \(z^\epsilon\frac{\partial}{\partial z}\)^{n_i}a_i(\alpha_i z)=\bar\psi^0(v(z))=0.\]
This implies that
\[\sum_{i=1}^k \mu_i d^{n_i}{a_i}^{\alpha_i,0}=0\quad\text{in}\quad U=\widetilde{U}/\ker \Psi.\]

  For $u\in U$, set
\begin{align*}u_K(z)=\sum_{m\in\Z}(u\ot t^m+K)z^{-m-1}\in\((U\ot\C[t,t^{-1}])/K\)[[z,z^{-1}]].
\end{align*}
Note that $\bar\pi(\bar a^{\alpha,0}(z))= (a^{\alpha,0})_K(z)$ and  $(d a^{\alpha,0})_K(z)=\frac{\partial}{\partial z}(a^{\alpha,0})_K(z)$  for $a\in A, \alpha\in \Gamma$, we obtain
\[\bar\pi(v(z))=\sum_{i=1}^k \mu_i \(\frac{\partial}{\partial z}\)^{n_i} (a_i^{\alpha_i,0})_K(z)= \big(\sum_{i=1}^k \mu_i d^{n_i}a_i^{\alpha_i,0}\big)_K(z)=0.\]
This  proves $\ker\bar\psi^0\subset \ker \bar\pi$. Then $\bar\pi$ induces a linear map $\pi$ from $\g^0$ to $U\ot\C[t,t^{-1}]/K$, and
the assertion $K=\ker\rho$ follows from the fact that
$\pi$ is
the inverse of $\rho_K$.
\end{proof}

Now we have shown that $(\g^0,U,d,\rho)$ is a vertex Lie algebra. Take $\mathcal{L}=\g^0$ in \eqref{eq:defL+} we
obtain a subalgebra $\g^0_+$ of $\g^0$. From  \eqref{eq:defrho}, we see that this subalgebra coincides with that defined in  \eqref{intro:g0+}.
Thus, it follows from Proposition \ref{prop:vavl} that  there is a
 unique vertex algebra  structure on the induced $\g^0$-module $V_{\g^0}$ as defined in \eqref{intro:vg0}
such that  $\bm{1}$ is the vacuum vector
 and  $U$ is a generating set with $Y(u,z)=u(z)$ for $u\in U$.
 Furthermore,
since $Y(d^n u,z)=Y(u_{-n-1}{\mathbf 1},z)$ for $n\in \N$ and $u\in U$ (\cite{DLM}), we see that
\begin{align}\label{eq:genVg0}
\{a^{\alpha,0}\mid a\in A, \alpha\in \Gamma\}\end{align}
is also a generating set of $V_{\g^0}$ with
\begin{align}\label{eq:genVg0act}
Y(a^{\alpha,0},z)=a^{\alpha,0}(z).
\end{align}

In what follows we define a $\Gamma$-action $R$ on $V_{\g^0}$ so that $V_{\g^0}$ becomes a $(\Gamma,\epsilon)$-vertex algebra as defined in Introduction.
First, for $\lambda\in \Gamma$, we define a linear automorphism $\bar{R}_\lambda$ on $\bar{\g}^0$ by
\[\bar R_\lambda(\bar{a}^{\alpha,0}(m))=\lambda^{(m+1)(\epsilon-1)}\bar{a}^{\alpha\lambda^{-1},0}(m)\]
for $a\in A,\alpha\in\Gamma$ and $m\in\Z$,  or equivalently, by
  $$\bar R_\lambda(\bar a^{\alpha,0}(z))=\bar{a}^{\alpha\lambda^{-1},0}(\lambda^{1-\epsilon}z)$$
  for $a\in A,\alpha\in\Gamma$.

 \begin{lemt}\label{lem:barR} For any $\lambda\in \Gamma$, one has  $\bar{R}_\lambda(\bar{\g}^0_0)= \bar{\g}^0_0$.
\end{lemt}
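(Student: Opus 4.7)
The plan is to mimic the proof of Lemma 2.8 for $\bar{\sigma}_\lambda$, replacing the substitution $z \mapsto \lambda z$ appropriate to $\zeta = \epsilon$ by the substitution $z \mapsto \lambda^{1-\epsilon} z$ appropriate to $\bar{R}_\lambda$ (for $\zeta = 0$). Since $\bar{R}_\lambda$ is invertible with inverse $\bar{R}_{\lambda^{-1}}$, it suffices to establish one containment $\bar{R}_\lambda(\bar{\g}^0_0) \subset \bar{\g}^0_0$; the reverse follows by applying the result with $\lambda^{-1}$ in place of $\lambda$.

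To prove the containment, I would show that for every $u(z) \in \ker \bar{\psi}^0$ there exists $v(z) \in \ker \bar{\psi}^0$ such that $\bar{R}_\lambda(u(z)) = v(\lambda^{1-\epsilon} z)$; since $\bar{\g}^0_0$ is spanned by coefficients of elements of $\ker \bar{\psi}^0$ and rescaling the formal variable only rescales coefficients, this clearly implies $\bar{R}_\lambda(\bar{\g}^0_0) \subset \bar{\g}^0_0$. Explicitly, writing $u(z) = \sum_{i=1}^k \mu_i \bigl(\tfrac{\partial}{\partial z}\bigr)^{n_i} \overline{a_i}^{\alpha_i,0}(z)$, the natural candidate is
\[
v(z) \;=\; \sum_{i=1}^k \mu_i\, \lambda^{n_i(1-\epsilon)} \Bigl(\tfrac{\partial}{\partial z}\Bigr)^{n_i} \overline{a_i}^{\alpha_i \lambda^{-1},\,0}(z).
\]
Then $\bar{R}_\lambda(u(z)) = v(\lambda^{1-\epsilon} z)$ is immediate from the definition of $\bar{R}_\lambda$ on generating functions and the chain rule $\partial_z = \lambda^{1-\epsilon} \partial_w$ after the change of variable $w = \lambda^{1-\epsilon} z$.

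The one non-routine check is verifying $v(z) \in \ker \bar{\psi}^0$. Applying $\bar{\psi}^0$ to $v(z)$ yields $\sum_{i=1}^k \mu_i \lambda^{n_i(1-\epsilon)} \bigl(z^\epsilon \tfrac{\partial}{\partial z}\bigr)^{n_i} a_i(\alpha_i \lambda^{-1} z)$. To see this vanishes, start from the hypothesis $\bar{\psi}^0(u(z)) = \sum_i \mu_i (z^\epsilon \partial_z)^{n_i} a_i(\alpha_i z) = 0$ and substitute $z \mapsto \lambda^{-1} z$; under this substitution $z^\epsilon \partial_z$ transforms as $z^\epsilon\partial_z \mapsto \lambda^{1-\epsilon}\,z^\epsilon \partial_z$, and each factor of $\lambda^{1-\epsilon}$ coming from the $n_i$-fold composition gives exactly the prefactor $\lambda^{n_i(1-\epsilon)}$ appearing in $\bar{\psi}^0(v(z))$. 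Hence $\bar{\psi}^0(v(z)) = 0$, and the lemma follows.

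I expect no serious obstacle: the only thing to get right is the exponent bookkeeping in the transformation of $z^\epsilon \tfrac{\partial}{\partial z}$ under dilation of $z$, which is a one-line computation. Everything else is parallel to Lemma 2.8.
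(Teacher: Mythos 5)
Your proposal is correct and follows essentially the same route as the paper: the paper also reduces to showing that each $u(z)\in\ker\bar{\psi}^0$ satisfies $\bar R_\lambda(u(z))=v(\lambda^{1-\epsilon}z)$ for the very same candidate $v(z)=\sum_i\mu_i\lambda^{n_i(1-\epsilon)}\bigl(\tfrac{\partial}{\partial z}\bigr)^{n_i}\overline{a_i}^{\alpha_i\lambda^{-1},0}(z)$, and verifies $v(z)\in\ker\bar{\psi}^0$ by the same substitution $w=\lambda^{-1}z$ using $w^\epsilon\partial_w\mapsto\lambda^{1-\epsilon}z^\epsilon\partial_z$. Your explicit remark that one containment suffices by invertibility of $\bar R_\lambda$ is left implicit in the paper but is a harmless (and correct) addition.
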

\begin{proof} Recall that  $\bar{\g}^0_0$ is spanned by the
coefficients of the generating functions in $\ker \bar{\psi}^0$.
It suffices to show that for any $u(z)\in\ker\bar{\psi}^0$, $\bar R_\lambda(u(z))=v(\lambda^{1-\epsilon} z)$ for some $v(z)\in\ker\bar{\psi}^0 $.

 For any $u(z)\in\ker\bar{\psi}^0$,
write $u(z)=\sum_{i=1}^k\mu_i\(\frac{\partial}{\partial z}\)^{n_i}\overline{a_i}^{\alpha_i,0}(z)$, where $\mu_i\in\C,n_i\in\N, a_i\in A$ and $\alpha_i\in\Gamma$. By definition we have
\[\bar R_\lambda(u(z))=\sum_{i=1}^k\mu_i \(\frac{\partial}{\partial z}\)^{n_i}\overline{a_i}^{\alpha_i\lambda^{-1},0}(\lambda^{1-\epsilon} z)=v(\lambda^{1-\epsilon} z),\]
where $v(z)=\sum_{i=1}^k\mu_i\lambda^{n_i(1-\epsilon)}\(\frac{\partial}{\partial z}\)^{n_i}\overline{a_i}^{\alpha_i\lambda^{-1},0}(z)$. Furthermore, since
\[\bar{\psi}^0(u(z))=\sum_{i=1}^k\mu_i\pd{z}{\epsilon}^{n_i}a_i(\alpha_iz)=0,\] we obtain
\[\bar{\psi}^0(v(z))=\sum_{i=1}^k\mu_i\lambda^{n_i(1-\epsilon)}\pd{z}{\epsilon}^{n_i}a_i(\alpha_i\lambda^{-1}z) =\bigg(\sum_{i=1}^k\mu_i\pd{w}{\epsilon}^{n_i}a_i(\alpha_iw)\bigg)\Big|_{w=\lambda ^{-1}z}=0.\]
Thus $v(z)\in\ker\bar{\psi}^0$, as desired.
\end{proof}
 By Lemma \ref{lem:barR}, $\bar{R}_\lambda$ induces a linear automorphism, say $R_\lambda$, on $\g^0$ such that
 \[ R_\lambda(a^{\alpha,0}(m))=\lambda^{(m+1)(\epsilon-1)}a^{\alpha\lambda^{-1},0}(m)\]
for $a\in A,\alpha\in\Gamma$ and $m\in\Z$.
\begin{lemt}\label{lem:defRlambda} For every $\lambda\in \Gamma$, $R_\lambda$ is a Lie automorphism of $\g^0$ which preserves  $\g^0_+$.
\end{lemt}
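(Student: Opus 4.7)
By Lemma \ref{lem:barR}, $R_\lambda$ is already a well-defined linear automorphism of $\g^0$, so the plan has only two things to verify: that $R_\lambda$ respects the bracket \eqref{untwisted-lb} (with $\zeta=0$), and that it stabilises $\g^0_+$.

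For the multiplicative check, I would translate the defining action to the level of generating series: the formulas
\[
R_\lambda(a^{\alpha,0}(z))=a^{\alpha\lambda^{-1},0}(\lambda^{1-\epsilon}z),\qquad
R_\lambda(a^{\alpha,0}(m))=\lambda^{(m+1)(\epsilon-1)}a^{\alpha\lambda^{-1},0}(m)
\]
are equivalent under the grading convention $a^{\alpha,0}(z)=\sum_m a^{\alpha,0}(m)z^{-m-1}$. Applying $R_\lambda$ coefficient-wise to \eqref{untwisted-lb} with $\zeta=0$ replaces each appearance of the series $(a_{(\alpha\beta^{-1},\gamma,i,j)}b)^{\gamma\beta,0}(w)$ by $(a_{(\alpha\beta^{-1},\gamma,i,j)}b)^{\gamma\beta\lambda^{-1},0}(\lambda^{1-\epsilon}w)$ while leaving the prefactors and $\partial_w$'s untouched. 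On the other side, I expand the bracket
\[
[R_\lambda a^{\alpha,0}(z), R_\lambda b^{\beta,0}(w)]=[a^{\alpha\lambda^{-1},0}(\lambda^{1-\epsilon}z), b^{\beta\lambda^{-1},0}(\lambda^{1-\epsilon}w)]
\]
directly from \eqref{untwisted-lb}, with variables $z'=\lambda^{1-\epsilon}z$, $w'=\lambda^{1-\epsilon}w$. The chain rule gives $\partial_{w'}=\lambda^{\epsilon-1}\partial_w$, and the delta function transforms as $(z')^{-1}\delta(w'/z')=\lambda^{\epsilon-1}z^{-1}\delta(w/z)$.

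The key step is then a bookkeeping calculation of powers of $\lambda$: the prefactor $(\alpha\lambda^{-1})^{\epsilon-1}(\beta\lambda^{-1})^{(i+j)(\epsilon-1)}$ and the Jacobians $\lambda^{j(\epsilon-1)}$, $\lambda^{i(\epsilon-1)}$, $\lambda^{\epsilon-1}$ multiply to $\alpha^{\epsilon-1}\beta^{(i+j)(\epsilon-1)}\lambda^{-(\epsilon-1)-(i+j)(\epsilon-1)+(i+j)(\epsilon-1)+(\epsilon-1)}=\alpha^{\epsilon-1}\beta^{(i+j)(\epsilon-1)}$, so all $\lambda$'s cancel and the two expressions agree term by term. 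This is the only step with any real content, but it is pure exponent arithmetic.

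Finally, to see that $R_\lambda(\g^0_+)\subset \g^0_+$, it is enough to recall \eqref{intro:g0+} and observe that for $n\ge 0$,
\[
R_\lambda(a^{\alpha,0}(n))=\lambda^{(n+1)(\epsilon-1)}a^{\alpha\lambda^{-1},0}(n)\in \g^0_+,
\]
since $\alpha\lambda^{-1}\in\Gamma$. Applying the same argument to $R_\lambda^{-1}=R_{\lambda^{-1}}$ gives the reverse inclusion, and hence $R_\lambda$ restricts to a Lie automorphism of $\g^0_+$.
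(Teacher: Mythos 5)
The proposal is correct and follows essentially the same route as the paper: both verify that $R_\lambda$ intertwines the bracket \eqref{untwisted-lb} (with $\zeta=0$) by substituting $z\mapsto\lambda^{1-\epsilon}z$, $w\mapsto\lambda^{1-\epsilon}w$ and checking that all powers of $\lambda$ from the prefactors, the chain rule on $\partial_w$, and the delta function cancel, and both dismiss the stability of $\g^0_+$ as immediate from the coefficient formula. The student merely writes out explicitly the exponent bookkeeping that the paper compresses into its $\Delta^{(i)}_{w,0}$ notation and the identity \eqref{eq:partial-delta0}.
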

\begin{proof}The assertion that $R_\lambda(\g^0_+)=\g^0_+$ is obvious. For $a,b\in A,\alpha,\beta\in\Gamma$ we have
\begin{equation*}\begin{split}
&\,[ R_\lambda\big( a^{\alpha,0}(z)),R_\lambda\big(b^{\beta,0}(w)\big)]=[ a^{\alpha\lambda^{-1},0}(\lambda^{1-\epsilon} z),b^{\beta\lambda^{-1},0}(\lambda^{1-\epsilon} w)]\\
=&\,(\alpha\lambda^{-1})^{\epsilon-1}\sum_{\gamma\in \Gamma}\sum_{i,j\geq 0}\beta^{(i+j)(\epsilon-1)}\(\frac{\partial}{\partial w}\)^{j}(a_{(\alpha\beta^{-1},\gamma,i,j)}b)^{\gamma\beta\lambda^{-1},0}(\lambda^{1-\epsilon}  w)\Del{w,0}{i}{\lambda^{1-\epsilon} z,\lambda^{1-\epsilon} w}\\
=&\,\alpha^{\epsilon-1}\sum_{\gamma\in \Gamma}\sum_{i,j\geq 0}\beta^{(i+j)(\epsilon-1)}\(\frac{\partial}{\partial w}\)^{j}(a_{(\alpha\beta^{-1},\gamma,i,j)}b)^{\gamma\beta\lambda^{-1},0}(\lambda^{1-\epsilon}  w)\Del{w,0}{i}{z, w}\\
=&\,R_\lambda\big([a^{\alpha,0}(z),b^{\beta,0}(w)]\big).
\end{split}\end{equation*}
This proves that $R_\lambda$ is a Lie automorphism of $\g^0$.
\end{proof}

In view of Lemma \ref{lem:defRlambda},  $R_\lambda$ extends (uniquely) to
 an associative algebra automorphism of $\U(\g^0)$ and  preserves its left ideal  $\U(\g^0)\g^0_+$.
 Thus it
 induces a linear automorphism on $V_{\g^0}\cong \U(\g^0)/\U(\g^0)\g^0_+$, which we still call $R_\lambda$, such that
 \begin{align}\label{eq:rlambdarelation}
 R_\lambda(g.v)=R_\lambda(g).R_\lambda(v)
 \end{align}
 for $g\in \g^0$ and $v\in V_{\g^0}$.
In particular, we obtain a linear map
\[R:\Gamma\rightarrow \mathrm{GL}(V_{\g^0}),\quad \lambda\mapsto R_\lambda.\]
Let $a\in A$ and $\alpha,\lambda\in \Gamma$.
Recall that $a^{\alpha,0}=a^{\alpha,0}(-1)\vac\in V_{\g^0}$.
By applying \eqref{eq:rlambdarelation}, we have
\begin{align}\label{eq:ractongen}
R_\lambda(a^{\alpha,0})=R_\lambda(a^{\alpha,0}(-1)\vac)
=R_\lambda(a^{\alpha,0}(-1))\vac
=a^{\alpha\lambda^{-1},0}(-1)\vac
=a^{\alpha\lambda^{-1},0}.
\end{align}

\begin{prpt}\label{prop:geva}
 $(V_{\g^0},R)$ is a $(\Gamma,\epsilon)$-vertex algebra.
\end{prpt}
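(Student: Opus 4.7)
The plan is to verify the two defining properties of a $(\Gamma,\epsilon)$-vertex algebra from Definition~\ref{def:geva}: that $R:\Gamma\to\mathrm{GL}(V_{\g^0})$ is a group homomorphism fixing the vacuum, and that the conjugation identity $R_\lambda Y(v,z) R_\lambda^{-1}=Y(R_\lambda v,\lambda^{1-\epsilon}z)$ holds for all $v\in V_{\g^0}$ and $\lambda\in\Gamma$. The homomorphism property is straightforward: on the basis element $a^{\alpha,0}(m)$ of $\g^0$, one directly computes
\[
R_\lambda R_\mu(a^{\alpha,0}(m))=(\lambda\mu)^{(m+1)(\epsilon-1)}a^{\alpha(\lambda\mu)^{-1},0}(m)=R_{\lambda\mu}(a^{\alpha,0}(m)),
\]
and this identity passes to $\U(\g^0)$ and then to $V_{\g^0}$ via \eqref{eq:rlambdarelation}. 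Since $R_\lambda$ extends to a unital algebra automorphism of $\U(\g^0)$, it sends the class of $1$ to the class of $1$, so $R_\lambda(\mathbf{1})=\mathbf{1}$.

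For the conjugation identity, I would first check it on the generating set $\{a^{\alpha,0}\mid a\in A,\ \alpha\in\Gamma\}$ and then extend to all of $V_{\g^0}$. For a generator $v=a^{\alpha,0}$, using $Y(a^{\alpha,0},z)=a^{\alpha,0}(z)$ and the identity $R_\lambda\, a^{\alpha,0}(m)\, R_\lambda^{-1}=R_\lambda(a^{\alpha,0}(m))=\lambda^{(m+1)(\epsilon-1)}a^{\alpha\lambda^{-1},0}(m)$ (which follows from \eqref{eq:rlambdarelation}), a direct calculation gives
\[
R_\lambda Y(a^{\alpha,0},z) R_\lambda^{-1}=\sum_{m\in\Z}\lambda^{(m+1)(\epsilon-1)}a^{\alpha\lambda^{-1},0}(m)z^{-m-1}=a^{\alpha\lambda^{-1},0}(\lambda^{1-\epsilon}z)=Y(R_\lambda a^{\alpha,0},\lambda^{1-\epsilon}z),
\]
where the penultimate equality is the substitution $z\mapsto \lambda^{1-\epsilon}z$ and the last uses \eqref{eq:ractongen}.

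To extend from generators to arbitrary $v$, introduce the auxiliary linear map $\tilde Y_\lambda(v,z)=R_\lambda^{-1}Y(R_\lambda v,\lambda^{1-\epsilon}z)R_\lambda$. Both the scalar rescaling $z\mapsto \lambda^{1-\epsilon}z$ and the conjugation by the linear automorphism $R_\lambda$ preserve the vertex algebra axioms (truncation, vacuum, creation and Jacobi identity), so $(V_{\g^0},\tilde Y_\lambda,\mathbf{1})$ is again a vertex algebra with the same vacuum as $(V_{\g^0},Y,\mathbf{1})$. By the previous paragraph, $\tilde Y_\lambda(u,z)=Y(u,z)$ for every $u$ in the generating set \eqref{eq:genVg0}. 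Since $V_{\g^0}$ is generated as a vertex algebra by this set, and a vertex algebra structure is determined by its restriction to a generating set together with the vacuum -- because $Y$ on any iterated product of generator modes applied to $\mathbf{1}$ is reconstructed from $Y$ on the generators via derivatives and normal-ordered products -- we conclude $\tilde Y_\lambda=Y$, which is the desired identity.

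The main obstacle is the extension step: one has to justify, either through an inductive argument using the normal-ordered product formula together with the commutator \eqref{untwisted-lb}, or by appealing to a standard uniqueness/reconstruction theorem for vertex algebras, that agreement of two vertex algebra structures on a generating set forces global agreement. The direct calculation on generators is routine, but care is needed in tracking the factors of $\lambda^{\epsilon-1}$ under the degree shift when translating between the variables $z$ and $\lambda^{1-\epsilon}z$.
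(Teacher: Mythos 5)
Your proposal is correct and follows essentially the same route as the paper: verify the conjugation identity $R_\lambda Y(v,z)R_\lambda^{-1}=Y(R_\lambda v,\lambda^{1-\epsilon}z)$ on the generating set $\{a^{\alpha,0}\}$ via \eqref{eq:rlambdarelation}, then extend to all of $V_{\g^0}$ using the fact that this set generates the vertex algebra. The only difference is in how the extension step is packaged — the paper shows directly that the set $S$ of vectors satisfying the identity is a vertex subalgebra by means of the iterate/commutator formula, whereas you transport the structure to an auxiliary $\tilde Y_\lambda(v,z)=R_\lambda^{-1}Y(R_\lambda v,\lambda^{1-\epsilon}z)R_\lambda$ and invoke the standard uniqueness of a vertex algebra structure determined by its values on a generating set; both reduce to the same induction on iterated products.
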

\begin{proof}
Fix a $\lambda\in\Gamma$, and set $S=\{v\in V_{\g^0}\mid R_\lambda Y(v,z) R_\lambda^{-1}=Y(R_\lambda v, \lambda^{1-\epsilon}z)\}$.
We will prove the proposition by verifying that $S=V_{\g^0}$.
Let $a\in A,\alpha\in\Gamma$ and $u\in V_{\g^0}$.
From \eqref{eq:genVg0act} and \eqref{eq:rlambdarelation} we have
\begin{align*}
&R_\lambda((a^{\alpha,0})_mu)=R_\lambda(a^{\alpha,0}(m).u)\\
=&R_\lambda(a^{\alpha,0}(m)).(R_\lambda u)
=\lambda^{(m+1)(\epsilon-1)}a^{\alpha\lambda^{-1},0}(m).(R_\lambda u)\\
=&\lambda^{(m+1)(\epsilon-1)}(a^{\alpha\lambda^{-1},0})_m(R_\lambda u)
=\lambda^{(m+1)(\epsilon-1)}(R_\lambda a^{\alpha,0})_m(R_\lambda u).
\end{align*}
This implies that the generating set \eqref{eq:genVg0} of $V_{\g^0}$ lies in $S$.
Thus it suffices to prove that $S$ is a vertex subalgebra of $V_{\g^0}$.

It is clear that the vacuum vector $\vac\in S$. Let $u,v\in S$, $\nu\in V_{\g^0}$ and $m,n\in \Z$.
Then it follows from the Jacobi identity that
\begin{align*}
&\,R_\lambda\((u_mv)_n\nu\)\\
=&\,\sum_{i\geq 0}(-1)^i\binom{m}{i}\Big(R_\lambda\(u_{m-i}(v_{n+i}\nu)\)-(-1)^mR_\lambda\(v_{m+n-i}(u_{i}\nu)\)\Big)\\
=&\,\lambda^{(m+n+2)(\epsilon-1)}\sum_{i\geq 0}(-1)^i\binom{m}{i}\Big((R_\lambda u)_{m-i}\big((R_\lambda v)_{n+i}(R_\lambda \nu)\big)\\
&-(-1)^m(R_\lambda v)_{m+n-i}\big((R_\lambda u)_{i}(R_\lambda \nu)\big)\Big)\\
=&\,\lambda^{(m+n+2)(\epsilon-1)}\big((R_\lambda u)_m(R_\lambda v)\big)_n(R_\lambda \nu)=\lambda^{(n+1)(\epsilon-1)}\big(R_\lambda (u_m v)\big)_n(R_\lambda \nu).
\end{align*}
 This gives that $u_mv\in S$,  so $S$ is a vertex subalgebra of $V_{\g^0}$, as desired.
\end{proof}

 Theorem \ref{thm:main2}\,(I) follows from Proposition \ref{prop:geva}, \eqref{eq:genVg0act} and \eqref{eq:ractongen}.

\subsection{Basics on $\Gamma$-equivariant $\phi_\epsilon$-coordinated quasi modules}
Before proving the second part of  Theorem \ref{thm:main2},
 we recall some basic properties of $\Gamma$-equivariant $\phi_\epsilon$-coordinated quasi modules
for a $(\Gamma,\epsilon)$-vertex algebra.

Let $W$ be a vector space, and set \[\E(W)=\Hom(W,W((z)))\subset(\End W)[[z,z^{-1}]].\]
We define a $\Gamma$-action $\mathfrak{R}$ on $\E(W)$ by
\begin{align*}
\mathfrak{R}:\Gamma\rightarrow \mathrm{GL}(\E(W)),\quad \lambda\mapsto (\mathfrak{R}_\lambda:a(z)\mapsto a(\lambda^{-1}z)).
\end{align*}
Let $(a(z),b(z))$ be a $\Gamma$-quasi local pair in $\E(W)$ in the sense that
 there exists a polynomial $q(z)\in\C[z]$ whose roots lie in $\Gamma$ such that
\begin{eqnarray}\label{gammma-quasi-local}
q(z_1/z_2)\ [a(z_1),b(z_2)]=0.
\end{eqnarray}
We define the operation (cf. \cite{Li-CMP})
$$Y_\E^\epsilon(a(z),z_0)b(z)=\sum_{n\in\Z}a(z)_{n}^\epsilon b(z)z_0^{-n-1}\in\E(W)((z_0))$$
by the following rule:
\[Y_\E^\epsilon(a(z),z_0)b(z)=q(\phi_\epsilon(z,z_0)/z)^{-1}(q(z_1/z)a(z_1)b(z))\mid_{z_1=\phi_\epsilon(z,z_0)},\]
where $\phi_\epsilon(z,z_0)=e^{z_0z^\epsilon\frac{d}{dz}}z$.
The definition of $Y_\E^\epsilon$ does not depend on the choice of $q(z)$.
A $\Gamma$-quasi local subspace $U$ of $\E(W)$ is said to be {\em $Y_\E^\epsilon$-closed} if
$a(z)^\epsilon_{n} b(z)\in U$ for $a(z),b(z)\in U$ and $n\in\Z$.

We have the following results from \cite{CLTW}.

\begin{prpt}\label{prop:<S>}
Let $S$ be a $\Gamma$-stable and $\Gamma$-quasi local subset of $\E(W)$.
Then there is a  smallest $Y_\E^\epsilon$-closed $\Gamma$-quasi local subspace of $\E(W)$ which contains $S$ and $1_W$, denoted by $\<S\>_\epsilon$,
such that $\<S\>_\epsilon$ is $\Gamma$-stable and $(\<S\>_\epsilon,Y_\E^\epsilon,1_W,\mathfrak{R})$ is a $(\Gamma,\epsilon)$-vertex algebra. Moreover,  $W$ is a faithful
$\Gamma$-equivariant $\phi_\epsilon$-coordinated quasi $\langle S\rangle_\epsilon$-module with $Y_W^\epsilon(a(z),z_0)=a(z_0)$ for $a(z)\in \langle S\rangle_\epsilon$.
\end{prpt}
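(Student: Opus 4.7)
The plan is to mimic the classical Li-style construction of a (weak) vertex algebra from a set of mutually local fields, adapted to the $\Gamma$-equivariant $\phi_\epsilon$-coordinated setting as developed in \cite{Li-adv,Li-CMP,Li3}. First I would construct $\langle S\rangle_\epsilon$ explicitly as an inductive closure: set $U_0=\C 1_W+\sum_{\alpha\in\Gamma}\C\,\mathfrak{R}_\alpha S$, which is $\Gamma$-stable by the $\Gamma$-stability of $S$, and inductively
\[
U_{n+1}=U_n+\te{Span}\{a(z)^\epsilon_{m}b(z)\mid a(z),b(z)\in U_n,\ m\in\Z\}.
\]
Then $\langle S\rangle_\epsilon:=\bigcup_{n\ge0}U_n$ is by construction the smallest $Y_\E^\epsilon$-closed subspace containing $S\cup\{1_W\}$. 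A direct computation from the defining formula of $Y_\E^\epsilon$ together with the explicit form of $\phi_\epsilon(z,z_0)=e^{z_0 z^\epsilon d/dz}z$ yields the conjugation identity $\mathfrak{R}_\lambda(a(z)^\epsilon_{m}b(z))=\lambda^{(m+1)(\epsilon-1)}(\mathfrak{R}_\lambda a(z))^\epsilon_{m}(\mathfrak{R}_\lambda b(z))$; combined with the $\Gamma$-stability of $U_0$, this propagates inductively to show each $U_n$, hence $\langle S\rangle_\epsilon$, is $\Gamma$-stable.

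Next, I would establish the key technical lemma: if $a(z),b(z),c(z)\in\E(W)$ are pairwise $\Gamma$-quasi local, then $a(z)^\epsilon_{m}b(z)$ is again $\Gamma$-quasi local with $c(z)$, with explicit locality polynomial obtained from those of the pairs $(a,c),(b,c)$. This reduces, after multiplying by a suitable product of the given locality polynomials evaluated at $z_1/z_2$ and $\phi_\epsilon(z_2,z_0)/z_2$, to the fact that expanding $z_1=\phi_\epsilon(z_2,z_0)$ preserves polynomial factors whose roots lie in $\Gamma$. Induction on $n$ then shows $U_n$ is $\Gamma$-quasi local for every $n$. The vacuum property $Y_\E^\epsilon(1_W,z_0)=1_{\E(W)}$ and the creation property $\lim_{z_0\to 0}Y_\E^\epsilon(a(z),z_0)1_W=a(z)$ are immediate from the definitions. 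The Jacobi identity for $Y_\E^\epsilon$ on $\langle S\rangle_\epsilon$ follows from weak associativity plus $\Gamma$-quasi commutativity in the $\phi_\epsilon$-setting, both encoded in the very definition of $a(z)^\epsilon_m b(z)$ via the substitution $z_1=\phi_\epsilon(z_2,z_0)$ and the shared locality polynomial; this yields the $(\Gamma,\epsilon)$-vertex algebra structure $(\langle S\rangle_\epsilon,Y_\E^\epsilon,1_W,\mathfrak{R})$.

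Finally, $W$ is realized as a $\Gamma$-equivariant $\phi_\epsilon$-coordinated quasi $\langle S\rangle_\epsilon$-module via $Y_W^\epsilon(a(z),z_0)=a(z_0)$: the equivariance $Y_W^\epsilon(\mathfrak{R}_\lambda a,z_0)=Y_W^\epsilon(a,\lambda^{-1}z_0)$ is just the definition of $\mathfrak{R}$, and the $\phi_\epsilon$-associativity relation is built into the definition of $Y_\E^\epsilon$ (via the chosen $q(z)$); faithfulness is automatic because distinct elements of $\E(W)$ act distinctly on $W$. The main obstacle in this plan is the second step: verifying that $Y_\E^\epsilon$ preserves $\Gamma$-quasi locality in the three-field situation and that the resulting operations satisfy the Jacobi identity. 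The delicacy lies in tracking the interaction between the noncommutative $\Gamma$-twist and the nonlinear substitution $z_1=\phi_\epsilon(z_2,z_0)$, which forces one to choose locality polynomials compatible with both; this is precisely the technical engine developed in \cite{Li-adv,Li-CMP,Li3,CLTW} which we invoke here.
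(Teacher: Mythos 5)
The paper gives no proof of this proposition at all---it is imported verbatim from \cite{CLTW} (building on \cite{Li-adv,Li-CMP})---and your sketch is a faithful outline of exactly that construction: the inductive closure $\bigcup_n U_n$, the conjugation identity $\mathfrak{R}_\lambda(a(z)^\epsilon_m b(z))=\lambda^{(m+1)(\epsilon-1)}(\mathfrak{R}_\lambda a(z))^\epsilon_m(\mathfrak{R}_\lambda b(z))$ (which is correct, via $\phi_\epsilon(\lambda^{-1}z,z_0)=\lambda^{-1}\phi_\epsilon(z,\lambda^{1-\epsilon}z_0)$), the three-field quasi-locality lemma, and the verification of the module axioms, with the genuinely hard steps delegated to the same references the paper cites. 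Since you correctly identify where the real technical content lies (preservation of $\Gamma$-quasi locality under the substitution $z_1=\phi_\epsilon(z_2,z_0)$ and the passage from quasi-locality on $W$ to genuine locality, hence the Jacobi identity, on $\langle S\rangle_\epsilon$) and invoke the same sources, your proposal is consistent with the paper's treatment.
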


For a vertex algebra $V$, let $\D$ be the canonical derivation on $V$ defined by $\D v=v_{-2}\vac$ for $v\in V$.
The following result are from \cite{Li-CMP} and \cite{CLTW}.
\begin{lemt}
Let  $(W,Y_W^\epsilon)$ be  a $\Gamma$-equivariant  $\phi_\epsilon$-coordinated quasi module for a $(\Gamma,\epsilon)$-vertex algebra $(V,R)$.
Then we have for $v\in V, n\in \N$,
\begin{equation}\label{Y_W(D^nv,z)}\begin{split}
Y_W^\epsilon(v_{-n-1}\vac,z)=\frac{1}{n!}Y_W^\epsilon(\D ^nv,z)=\frac{1}{n!}\pd{z}{\epsilon}^nY_W^\epsilon(v,z),
\end{split}\end{equation}
and  for $u,v\in V$,
\begin{equation}\label{equi-commutator}\begin{split}
&[Y_W^\epsilon(u,z),Y_W^\epsilon(v,w)]
=\sum_{\lambda\in\Gamma}\sum_{i\geq0}\lambda^{1-\epsilon}Y_W\big((R_{\lambda^{-1}}u)_iv,w\big)\Del{w,\epsilon}{i}{z,\lambda w}.
\end{split}\end{equation}
\end{lemt}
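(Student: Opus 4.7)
\emph{First identity.} The equality $v_{-n-1}\vac=\tfrac{1}{n!}\D^n v$ in $V$ is a standard consequence of the creation identity $Y(v,z_0)\vac=e^{z_0\D}v$, accounting for the first ``$=$''. For the second ``$=$'', I would apply the $\phi_\epsilon$-coordinated associativity axiom to the ordered pair $(v,\vac)$. Because $Y_W^\epsilon(\vac,z_2)=1_W$, the composition $Y_W^\epsilon(v,z_1)Y_W^\epsilon(\vac,z_2)=Y_W^\epsilon(v,z_1)$ is already of the required regularity, so the quasi-locality polynomial may be taken to be $q(z)=1$ and the axiom collapses to
\[
Y_W^\epsilon(Y(v,z_0)\vac,z_2)=Y_W^\epsilon(v,\phi_\epsilon(z_2,z_0)).
\]
Substituting $Y(v,z_0)\vac=e^{z_0\D}v$ on the left and using $Y_W^\epsilon(v,\phi_\epsilon(z_2,z_0))=e^{z_0 z_2^\epsilon\partial_{z_2}}Y_W^\epsilon(v,z_2)$ on the right (a formal Taylor expansion in $z_0$, valid because $\phi_\epsilon(z_2,z_0)=z_2+z_0 z_2^\epsilon+\cdots$), the coefficient of $z_0^n$ yields $Y_W^\epsilon(\D^n v,z_2)=(z_2^\epsilon\partial_{z_2})^n Y_W^\epsilon(v,z_2)$.

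\emph{Commutator identity.} The quasi-locality axiom provides a polynomial $q(z)=\prod_{\lambda\in\Gamma}(1-\lambda^{-1}z)^{k_\lambda}$ (only finitely many $k_\lambda\ne 0$) such that $q(z_1/z_2)[Y_W^\epsilon(u,z_1),Y_W^\epsilon(v,z_2)]=0$. A delta-function expansion argument parallel to Lemma~2.3 then forces a unique decomposition
\[
[Y_W^\epsilon(u,z_1),Y_W^\epsilon(v,z_2)]=\sum_{\lambda\in\Gamma}\sum_{i\ge 0}A_{\lambda,i}(z_2)\,\Del{z_2,\epsilon}{i}{z_1,\lambda z_2}
\]
with uniquely determined coefficients $A_{\lambda,i}(z_2)\in(\mathrm{End}\,W)[[z_2,z_2^{-1}]]$. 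To pin them down, I would substitute $z_1=\phi_\epsilon(z_2,z_0)$ into $q(z_1/z_2)Y_W^\epsilon(u,z_1)Y_W^\epsilon(v,z_2)$ and apply the associativity axiom, which rewrites it as $q(\phi_\epsilon(z_2,z_0)/z_2)Y_W^\epsilon(Y(u,z_0)v,z_2)$; under the same substitution, the delta functions $\Del{z_2,\epsilon}{i}{z_1,\lambda z_2}$ become formal residues supported at the branches of $\phi_\epsilon(z_2,\cdot)^{-1}(\lambda z_2)$. Expanding $Y(u,z_0)v=\sum_{i\ge 0}u_iv\,z_0^{-i-1}+\cdots$, comparing residues at each branch, and using the equivariance $Y_W^\epsilon(R_\lambda w,z)=Y_W^\epsilon(w,\lambda^{-1}z)$ to convert $Y_W^\epsilon(u,\lambda z_2)$-type expressions into $Y_W^\epsilon(R_{\lambda^{-1}}u,z_2)$ then yields $A_{\lambda,i}(z_2)=\lambda^{1-\epsilon}Y_W^\epsilon((R_{\lambda^{-1}}u)_iv,z_2)$, which is the stated formula.

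\emph{Main obstacle.} The chief technical difficulty is the bookkeeping for the second part: one has to accurately track the substitution $z_1=\phi_\epsilon(z_2,z_0)$ near each root $\lambda z_2$ of $q(\cdot/z_2)$, formally invert $\phi_\epsilon(z_2,\cdot)$ on each branch, and identify the Jacobian factor $\partial_{z_0}\phi_\epsilon=z_1^\epsilon$. This Jacobian is responsible both for the upgrade of the bare derivative $\partial/\partial w$ to the operator $w^\epsilon\partial/\partial w$ inside $\Del{w,\epsilon}{i}{z,\lambda w}$ and for the scalar weight $\lambda^{1-\epsilon}$ that precedes each term in the final sum. The corresponding detailed manipulations have been carried out in \cite{Li-CMP,CLTW}, and I would follow those arguments.
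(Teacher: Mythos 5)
Your proposal is correct and follows essentially the same route as the paper, which in fact gives no proof of this lemma but simply cites \cite{Li-CMP} and \cite{CLTW}; your sketch (creation axiom plus associativity against the vacuum for the first identity, quasi-locality plus a delta-function decomposition pinned down by associativity for the commutator formula) is precisely the argument carried out in those references. The one point worth making explicit is the invertibility of $q(\phi_\epsilon(z_2,z_0)/z_2)$ in $\C((z_2))[[z_0]]$ when you cancel it (equivalently, that $q$ can be chosen with $q(1)\ne 0$ once the product is already regular), but this is handled in the cited sources.
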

\subsection{Proof of Theorem \ref{thm:main2} (II)}
In this subsection we prove the second part of  Theorem \ref{thm:main2}.

We say that a $\g^{\epsilon}[\Gamma]$-module $W$ is {\em restricted} if
for any $a\in A$ and $\alpha\in \Gamma$, $a^{\alpha}(z)\in \E(W)$,
recalling the generating function $a^{\alpha}(z)$ defined in \eqref{a^alpha(z)}.
Note that  a restricted $\g$-module is naturally a restricted $\g^\epsilon[\Gamma]$-module with $a^{1}(z)=a( z)$ for $a\in A$
(see Theorem \ref{thm:main1}\,(II)).

\begin{prpt}\label{prop:equi-mod}
The restricted $\g^\epsilon[\Gamma]$-modules $W$ are exactly  the
$\Gamma$-equivariant $\phi_\epsilon$-coordinated quasi $V_{\g^0}$-modules $(W,Y_W^\epsilon)$ with
$a^{\alpha}(z)=Y_W^{\epsilon}(a^{\alpha,0},z)$ for $ a\in A$ and $\alpha\in \Gamma$.
\end{prpt}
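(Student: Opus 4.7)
The plan is to prove the two implications separately using the local-systems machinery of Proposition \ref{prop:<S>}.

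For the direction ``restricted $\g^\epsilon[\Gamma]$-module $\Rightarrow$ $\Gamma$-equivariant $\phi_\epsilon$-coordinated quasi $V_{\g^0}$-module,'' I would first form the set $S = \{a^\alpha(z) \mid a \in A, \alpha \in \Gamma\} \subset \E(W)$. The restrictedness hypothesis yields $S \subset \E(W)$; the identity $a^\alpha(z) = a^1(\alpha z)$ in $\g^\epsilon[\Gamma]$ gives $\mathfrak{R}_\lambda(a^\alpha(z)) = a^{\alpha\lambda^{-1}}(z) \in S$, so $S$ is $\Gamma$-stable; and since the bracket formula \eqref{universal-twisted-lb} contains only finitely many nonzero summands, a suitable polynomial $q(z) \in \C[z]$ with roots in $\Gamma$ annihilates every $[a^\alpha(z_1), b^\beta(z_2)]$, giving $\Gamma$-quasi locality. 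Proposition \ref{prop:<S>} then produces the $(\Gamma, \epsilon)$-vertex algebra $\<S\>_\epsilon$ with $W$ a quasi module via $Y_W^\epsilon(a^\alpha(z), z_0) = a^\alpha(z_0)$. Next, I would construct a $(\Gamma, \epsilon)$-vertex algebra homomorphism $\Phi : V_{\g^0} \to \<S\>_\epsilon$ sending $a^{\alpha, 0} \mapsto a^\alpha(z)$: by the realization of $V_{\g^0}$ as the enveloping vertex algebra of the vertex Lie algebra $\g^0$ (Lemma \ref{lem:thm2-1} and Proposition \ref{prop:vavl}), it suffices to check that the fields $a^\alpha(z) \in \<S\>_\epsilon$ satisfy the defining commutation relations of $\g^0$ given by \eqref{untwisted-lb} with $\zeta = 0$. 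Applying the equivariant commutator formula \eqref{equi-commutator} to $\<S\>_\epsilon$ acting on $W$, using \eqref{Y_W(D^nv,z)} to extract the $n$-th products, and matching with \eqref{universal-twisted-lb}, one finds that $\lambda^{1-\epsilon} \cdot (\alpha\lambda)^{\epsilon-1} = \alpha^{\epsilon-1}$ conspires correctly with the $\beta^{(i+j)(\epsilon-1)}$ factors, while Lemma \ref{lem:partial-delta} reconciles the $\delta$-derivatives $\Delta^{(i)}_{w, \epsilon}$ with $\Delta^{(i)}_{w, 0}$. Pulling back the $\<S\>_\epsilon$-module structure on $W$ along $\Phi$ furnishes the required quasi $V_{\g^0}$-module.

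For the converse, let $(W, Y_W^\epsilon)$ be a $\Gamma$-equivariant $\phi_\epsilon$-coordinated quasi $V_{\g^0}$-module and set $a^\alpha(z) := Y_W^\epsilon(a^{\alpha, 0}, z) \in \E(W)$. The equivariance axiom $Y_W^\epsilon(R_\lambda v, z) = Y_W^\epsilon(v, \lambda^{-1} z)$ combined with $R_\lambda(a^{\alpha, 0}) = a^{\alpha\lambda^{-1}, 0}$ (Theorem \ref{thm:main2}\,(I)) yields $a^\alpha(\lambda^{-1} z) = a^{\alpha\lambda^{-1}}(z)$, which is precisely the defining identity of the generating functions on $\g^\epsilon[\Gamma]$. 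Applying \eqref{equi-commutator} with $u = a^{\alpha, 0}$, $v = b^{\beta, 0}$ and $R_{\lambda^{-1}}(a^{\alpha, 0}) = a^{\alpha\lambda, 0}$ produces
\[
[a^\alpha(z), b^\beta(w)] = \sum_{\lambda \in \Gamma} \sum_{i \geq 0} \lambda^{1-\epsilon}\, Y_W^\epsilon\bigl((a^{\alpha\lambda, 0})_i\, b^{\beta, 0},\, w\bigr)\, \Del{w, \epsilon}{i}{z, \lambda w}.
\]
Expanding $(a^{\alpha\lambda, 0})_i b^{\beta, 0}$ through the Borcherds commutator formula in $V_{\g^0}$ (whose brackets are governed by those of $\g^0$ as recorded in \eqref{untwisted-lb} with $\zeta = 0$) and simplifying via \eqref{Y_W(D^nv,z)} reproduces precisely the commutator \eqref{universal-twisted-lb} of $\g^\epsilon[\Gamma]$, endowing $W$ with a restricted $\g^\epsilon[\Gamma]$-module structure.

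The main obstacle lies in the bookkeeping of the forward direction: the various powers of $\alpha, \beta, \lambda$ and the distinction between the $\partial_w$ operator governing the vertex Lie algebra $\g^0$ and the $w^\epsilon \partial_w$ operator governing $\g^\epsilon[\Gamma]$ must be tracked carefully, with the $\delta$-function identities of Lemma \ref{lem:partial-delta} used to bridge the two families of derivatives. Once these conversions are verified at the level of the generating fields, the universal property of the enveloping vertex algebra $V_{\g^0}$ automatically assembles them into the desired homomorphism $\Phi$, and the rest is formal.
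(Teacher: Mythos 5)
Your proposal follows essentially the same route as the paper, with the two implications presented in the opposite order: for ``restricted module $\Rightarrow$ quasi module'' you invoke Proposition \ref{prop:<S>} on $S=\{a^{\alpha}(z)\}$ and then transport the structure along a homomorphism $V_{\g^0}\to\<S\>_\epsilon$ obtained from the universal property (the paper builds this map first as a $\g^0$-module map $\pi$ with $\pi(\vac)=1_W$ and then upgrades it to a $(\Gamma,\epsilon)$-vertex algebra homomorphism, which amounts to the same computation via \eqref{equi-commutator}, the Borcherds commutator formula and \eqref{Y_W(D^nv,z)}); for the converse you match \eqref{equi-commutator} against \eqref{universal-twisted-lb}, exactly as the paper does.

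There is, however, one concrete step missing from your converse direction. The Lie algebra $\g^\epsilon[\Gamma]$ is not presented merely by the $\Gamma$-covariance relations $a^{\alpha}(\lambda^{-1}z)=a^{\alpha\lambda^{-1}}(z)$ together with the commutators \eqref{universal-twisted-lb}: by construction it is a quotient of the free space $\bar\g^\epsilon$ by the ideal $\bar\g^\epsilon_0$ spanned by the coefficients of $\ker\bar\psi^\epsilon$, i.e.\ by all linear relations of the form $\sum_i\mu_i\(z^\epsilon\frac{\partial}{\partial z}\)^{n_i}a_i^{\alpha_i,\epsilon}(z)=0$ inherited from relations $\sum_i\mu_i\(z^\epsilon\frac{\partial}{\partial z}\)^{n_i}a_i(\alpha_i z)=0$ in $\g$. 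Before one can speak of ``the'' $\g^\epsilon[\Gamma]$-action determined by $a^{\alpha}(z)\mapsto Y_W^\epsilon(a^{\alpha,0},z)$, one must check that the operators $Y_W^\epsilon(a^{\alpha,0},z)$ satisfy these same linear relations. This is where the paper's argument does real work: a relation in $\ker\bar\psi^\epsilon$ forces the corresponding relation $\sum_i\mu_i\(\frac{\partial}{\partial z}\)^{n_i}a_i^{\alpha_i,0}(z)=0$ in $\g^0$ (hence $\sum_i\mu_i\,n_i!\,(a_i^{\alpha_i,0})_{-n_i-1}\vac=0$ in $V_{\g^0}$), and then \eqref{Y_W(D^nv,z)} converts $(a_i^{\alpha_i,0})_{-n_i-1}\vac$ into $\frac{1}{n_i!}\(z^\epsilon\frac{\partial}{\partial z}\)^{n_i}Y_W^\epsilon(a_i^{\alpha_i,0},z)$, which is exactly the conversion between the $\frac{\partial}{\partial z}$ governing $\g^0$ and the $z^\epsilon\frac{\partial}{\partial z}$ governing $\g^\epsilon$ that makes the whole construction cohere. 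You use \eqref{Y_W(D^nv,z)} only to simplify the Borcherds expansion of the commutator; the well-definedness check is a separate, necessary use of it. The step is provable, so your approach does not fail, but the proof is incomplete without it.
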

\begin{proof}
Let $(W,Y_W^\epsilon)$ be a $\Gamma$-equivariant $\phi_\epsilon$-coordinated quasi $V_{\g^0}$-module.
Let $\bar \eta:\bar\g^\epsilon\rightarrow\te{End}(W)$ be a linear map  determined by
\[ \bar a^{\alpha,\epsilon}(z)\mapsto Y_W^{\epsilon}(a^{\alpha,0},z)\quad (a\in A,\alpha\in\Gamma).\]
 In what follows we  show that $\bar\eta$ induces an action of $\g^\epsilon[\Gamma]$ on $W$.

Set
  \[v(z)=\sum_{i=1}^k \mu_i \(z^\epsilon\frac{\partial}{\partial z}\)^{n_i} \overline{a_i}^{\alpha_i,\epsilon}(z)\in \ker \bar{\psi}^\epsilon
 \quad (\mu_i\in \C, n_i\in \N, a_i\in A, \alpha_i\in \Gamma).\]
 Note that we have
 \[\bar\psi^0\(\sum_{i=1}^k \mu_i \(\frac{\partial}{\partial z}\)^{n_i} \overline{a_i}^{\alpha_i,0}(z)\)=
 \sum_{i=1}^k  \mu_i \(z^\epsilon\frac{\partial}{\partial z}\)^{n_i} a_i(\alpha_i z)=\bar\psi^\epsilon(v(z))=0.\]
 This implies that in $\g^0[[z,z^{-1}]]$:
\[\sum_{i=1}^k \mu_i \(\frac{\partial}{\partial z}\)^{n_i} a_i^{\alpha_i,0}(z)=0.\] In particular, the constant term $\sum_{i=1}^k\mu_i n_i!a_i^{\alpha_i,0}(-n_i-1)=0.$
Then  from \eqref{Y_W(D^nv,z)}, we have
\[\bar{\eta}(v(z))=\sum_{i=1}^k\mu_i\pd{z}{\epsilon}^{n_i}Y_W^\epsilon(a_i^{\alpha_i,0},z)=
Y_W^\epsilon\(\sum_{i=1}^k\mu_i n_i!(a_i^{\alpha_i,0})_{-n_i-1}{\bf 1},z\)=0.\]
Thus $\bar{\eta}$ factors through the subspace $\bar\g^\epsilon_0$ and yields  a linear map $\eta:\g^\epsilon=\bar\g^\epsilon/\bar\g^\epsilon_0\rightarrow\te{End}(W)$ such that
\[\quad  a^{\alpha,\epsilon}(z)\mapsto Y_W^{\epsilon}(a^{\alpha,0},z)\quad (a\in A,\alpha\in\Gamma).\]
Furthermore, by Definition \ref{def:geva} (2) and  \eqref{eq:ractongen} we obtain
  $$Y_W^\epsilon( a ^{\alpha,0},z)=Y_W^\epsilon(R_{\lambda^{-1}} a ^{\alpha\lambda^{-1} ,0},z)=Y_W^\epsilon(a^{\alpha\lambda^{-1},0},\lambda z).$$
Then $\eta$ factors through the subspace $\g^\epsilon_\Gamma$ and hence induces a $\g^\epsilon[\Gamma]$-action on $W$ through
\[\eta_\Gamma:\g^\epsilon[\Gamma]=\g^\epsilon/\g^\epsilon_\Gamma\rightarrow\te{End}(W)  \te{ determined by } a^{\alpha}(z)\mapsto Y_W^{\epsilon}(a^{\alpha,0},z)\quad (a\in A,\alpha\in\Gamma).\]

Now we prove that $(W,\eta_\Gamma)$ is a representation of $\g^\epsilon[\Gamma]$.
 From \eqref{equi-commutator}, we have
\begin{equation*}\begin{split}
&[Y_W^\epsilon(a^{\alpha,0},z),Y_W^\epsilon(b^{\beta,0},w)]=\sum_{\lambda \in\Gamma}\sum_{i\geq0} \lambda^{1-\epsilon} Y^\epsilon_W\big(a^{\lambda\alpha,0}(i)b^{\beta,0}(-1)\vac,w\big)\Del{w,\epsilon}{i}{z,\lambda w}
\end{split}\end{equation*}
for any $a,b\in A,\alpha,\beta\in\Gamma$. Note that $a^{\lambda\alpha,0}(i)b^{\beta,0}(-1)\vac=[a^{\lambda\alpha,0}(i),b^{\beta,0}(-1)]\vac$ for $i\in\N$.
Then  from \eqref{untwisted-lb},  \eqref{universal-twisted-lb} and \eqref{Y_W(D^nv,z)}, we have
\begin{equation*}\begin{split}
&[\eta_\Gamma(a^{\alpha}(z)),\eta_\Gamma(b^{\beta}(w))]=[Y_W^\epsilon(a^{\alpha,0},z),Y_W^\epsilon(b^{\beta,0},w)]\\
=&\sum_{\lambda,\gamma \in\Gamma}\sum_{i,j\geq0} \alpha^{\epsilon-1}\beta^{(i+j)(\epsilon-1)} j!Y^\epsilon_W
\big((a_{(\lambda\alpha\beta^{-1},\gamma,i,j)}b)^{\gamma\beta,0}(-j-1)\vac,w\big)
\Del{w,\epsilon}{i}{z,\lambda w}\\
=&\sum_{\lambda,\gamma \in\Gamma}\sum_{i,j\geq0} \alpha^{\epsilon-1}\beta^{(i+j)(\epsilon-1)}\pd{w}{\epsilon}^j
Y^\epsilon_W\big((a_{(\lambda\alpha\beta^{-1},\gamma,i,j)}b)^{\gamma\beta,0},w\big)
\Del{w,\epsilon}{i}{z,\lambda w}\\
=&\eta_\Gamma\big([a^{\alpha}(z),b^{\beta}(w)]_\Gamma\big).
\end{split}\end{equation*}
Thus $W$ is a (restricted) $\g^\epsilon[\Gamma]$-module with $a^{\alpha}(z)=Y_W^{\epsilon}(a^{\alpha,0},z)$ for $ a\in A, \alpha\in \Gamma$.

Conversely, let $W$ be a restricted $\g^\epsilon[\Gamma]$-module.
The commutator \eqref{universal-twisted-lb} implies that
\[S=\{a^{\alpha}(z)\mid a\in A,\alpha\in\Gamma\}\]
 is a $\Gamma$-quasi local subset of $\E(W)$. Furthermore, $S$ is $\Gamma$-stable as $a^{\alpha}(\lambda z)=a^{\alpha\lambda}(z)$ for $a\in A,\alpha,\lambda\in\Gamma$.
Then by applying Proposition \ref{prop:<S>}, we obtain that $(\<S\>_\epsilon,Y_\E^\epsilon,1_W,\mathfrak{R})$ is a $(\Gamma,\epsilon)$-vertex algebra and $W$ is a faithful
$\Gamma$-equivariant $\phi_\epsilon$-coordinated quasi $\langle S\rangle_\epsilon$-module with $Y_W^\epsilon(u(z),z_0)=u(z_0)$ for $u(z)\in \langle S\rangle_\epsilon$.
For  $a,b\in A$ and $\alpha,\beta\in\Gamma$, by \eqref{equi-commutator}  we have
\begin{equation*}\begin{split}
&[Y_W^\epsilon(a^{\alpha }( z),z_1),Y_W^\epsilon(b^{\beta }( z),z_2)]
=\sum_{\lambda\in\Gamma}\sum_{i\geq0}\lambda^{1-\epsilon} Y_W^\epsilon\(a^{\lambda\alpha }( z)_i^\epsilon b^{\beta }( z) ,z_2\)\Del{z_2,\epsilon}{i}{z_1,\lambda z_2}.
\end{split}\end{equation*}
Meanwhile, recall from \eqref{universal-twisted-lb},
\begin{align*}
&[Y_W^\epsilon(a^{\alpha }( z),z_1),Y_W^\epsilon(b^{\beta }( z),z_2)]=[a^{\alpha}(z_1),b^{\beta}(z_2)]_\Gamma\\
=&\,\sum_{\lambda,\gamma\in \Gamma}\sum_{i,j\geq 0}\alpha^{\epsilon-1}\beta^{(i+j)(\epsilon-1)}\(\pd{ z_2}{\epsilon}^{j}(a_{(\lambda\alpha\beta^{-1},\gamma,i,j)}b)^{\gamma\beta}(z_2)\)\Del{z_2,\epsilon}{i}{ z_1,\lambda z_2}.
\end{align*}
Thus we have
\begin{equation}\label{3-1}\begin{split}
a^{\alpha }( z)_i^\epsilon b^{\beta }( z) =\alpha^{\epsilon-1}\beta^{i(\epsilon-1)}\sum_{\gamma\in \Gamma}\sum_{j\geq 0}\beta^{j(\epsilon-1)} \pd{ z}{\epsilon}^{j}(a_{(\alpha\beta^{-1},\gamma,i,j)}b)^{\gamma\beta}( z). \end{split}\end{equation}

Let  $u(z)\in\langle S\rangle_\epsilon$. It follows from \eqref{Y_W(D^nv,z)} that
\[\D (u(z_0))=Y_W^\epsilon(\D(u(z)),z_0)=\pd{z_0}{\epsilon}Y_W^\epsilon(u(z),z_0)=\pd{z_0}{\epsilon}u(z_0).\]
This implies that (\cite[Proposition 3.1.18]{LL})
\[Y_\E^\epsilon\(\pd{z}{\epsilon}u(z),z_0\)=Y_\E^\epsilon\(\D(u(z)),z_0\)=\frac{\partial }{\partial z_0}Y_\E^\epsilon\( u(z),z_0\).\]
Combining this with \eqref{3-1}, from Borcherds commutator formula (\cite[(3.1.8)]{LL}),  we have
\begin{align*}
&\,[Y_\E^\epsilon(a^{\alpha}(z),z_1),Y_\E^\epsilon(b^{\beta}(z),z_2)]\\
=&\,\sum_{i\geq0}Y_\E^\epsilon(a^{\alpha}(z)_i^\epsilon b^{\beta}(z),z_2)\Del{z_2,0}{i}{ z_1, z_2}\\
=&\,\sum_{i,j\geq0}\sum_{\gamma\in \Gamma}\alpha^{\epsilon-1}\beta^{(i+j)(\epsilon-1)}Y_\E^\epsilon\(\pd{ z}{\epsilon}^{j}(a_{(\alpha\beta^{-1},\gamma,i,j)}b)^{\gamma\beta}( z) ,z_2\)\Del{z_2,0}{i}{ z_1, z_2}\\
=&\,\sum_{i,j\geq 0}\sum_{\gamma\in \Gamma}\alpha^{\epsilon-1}\beta^{(i+j)(\epsilon-1)} \(\frac{\partial}{\partial z_2}\)^{j}Y_\E^\epsilon\( (a_{(\alpha\beta^{-1},\gamma,i,j)}b)^{\gamma\beta}( z),z_2\)\Del{z_2,0}{i}{ z_1, z_2}.
\end{align*}
Comparing this with  \eqref{untwisted-lb}, we deduce that  $\langle S\rangle_\epsilon$ is a $\g^0$-module  with the action
\[a^{\alpha,0}(z_0)=Y_\E^\epsilon(a^{\alpha}(z),z_0)\quad \te{for }a\in A,\alpha\in\Gamma.\]

Note that $\langle S\rangle_\epsilon$ is generated by $1_W$ as a $\g^0$-module and  $\g^0_+1_W=0$.
From the universal property of $V_{\g^0}$, there is
 a (unique) $\g^0$-module homomorphism
$\pi:V_{\g^0}\rightarrow\langle S\rangle_\epsilon$ such that $\pi(\vac)=1_W$.
For $a\in A,\alpha\in\Gamma$ and $v\in V_{\g^0}$, it follows that
\[\pi(Y(a^{\alpha,0},z_0)v)=\pi\(a^{\alpha,0}(z_0)v\)=Y_\E^\epsilon\(a^{\alpha }(z),z_0\)\pi(v)=Y_\E^\epsilon\(\pi(a^{\alpha,0}),z_0\)\pi(v),\]
 where we used the fact that
\begin{align*}
\pi(a^{\alpha,0})=&\,\pi\(\Res_{z_0}z_0^{-1}Y(a^{\alpha,0},z_0)\vac\)=\pi\(\Res_{z_0}z_0^{-1}a^{\alpha,0}(z_0)\vac\)\\
=&\,\Res_{z_0}z_0^{-1}Y_\E^\epsilon(a^{\alpha }(z),z_0)1_W=a^{\alpha }(z).
\end{align*}
This implies that
 $\pi$ is also a vertex algebra homomorphism. Note that $\{a^{\alpha,0}\mid a\in A,\alpha\in\Gamma\}$ is a generating set of $V_{\g^0}$.
 Furthermore, for $a,b\in A$ and $\alpha,\beta,\lambda\in\Gamma$, we have
\begin{equation*}\begin{split}
&\mathfrak{R}_\lambda\circ\pi\(Y(a^{\alpha,0},z_0)b^{\beta,0}\)=\mathfrak{R}_\lambda\(Y_\E^\epsilon(a^\alpha(z),z_0)b^\beta(z)\)\\
=&Y_\E^\epsilon\big(a^{\alpha}(\lambda^{-1} z),\lambda^{1-\epsilon} z_0\big)b^{\beta}(\lambda^{-1}z)=Y_\E^\epsilon\big(a^{\alpha\lambda^{-1}}( z),\lambda^{1-\epsilon} z_0\big)b^{\beta\lambda^{-1}}(z)\\
=&\pi\big(Y\big(a^{\alpha\lambda^{-1},0},\lambda^{1-\epsilon} z_0\big)b^{\beta\lambda^{-1},0}\big)=\pi\circ R_\lambda\(Y(a^{\alpha,0}, z_0)b^{\beta,0}\).
 \end{split}\end{equation*}
This says that $\pi$ is a $(\Gamma,\epsilon)$-vertex algebra homomorphism in the sense that  $\mathfrak{R}_\lambda\circ\pi=\pi\circ R_\lambda$ for $\lambda\in\Gamma$.
Thus, via the homomorphism $\pi$, $W$ becomes a $\Gamma$-equivariant $\phi_\epsilon$-coordinated quasi $V_{\g^0}$-module with $Y_W^{\epsilon}(a^{\alpha,0},z)=a^{\alpha}(z)$ for $ a\in A, \alpha\in \Gamma$.
\end{proof}

Finally, it is clear that Theorem \ref{thm:main2}\,(II) follows from Proposition \ref{prop:equi-mod} and Theorem \ref{thm:main1}\,(II).

\section{Examples}
In this section, we give five typical examples of quasi vertex Lie algebras: (i) the twisted affine Lie algebras, (ii) the quantum torus Lie algebras, (iii) the $q$-Heisenberg Lie algebras, (iv) the Virasoro-like algebras and (v) the Klein bottle Lie algebras. We shall use  Theorem  \ref{thm:main2} to associate them with vertex algebras.

\subsection{Twisted affine Lie algebras}
Let $\mathfrak{b}$ be a Lie algebra equipped with an invariant symmetric bilinear form $\<\cdot,\cdot\>$. Denote by
 \[\widehat{\CL}(\mathfrak{b})=\(\mathfrak{b}\ot \C[t,t^{-1}]\)\op\C\mk\]
the affine Lie algebra associated to the pair $(\mathfrak{b},\<\cdot,\cdot\>)$, where $\mk$ is central and for $a,b\in\mathfrak{b}, m,n\in \Z$,
 \begin{align}\label{affine-lb0}
[a\ot t^m,b\ot t^n]=[a,b]\ot t^{m+n}+\delta_{m,-n}\<a,b\>m\mk.
\end{align}
In terms of generating functions
$a(z)=\sum_{n\in\Z}(a\ot t^n) z^{-n-1}\ (a\in\mathfrak{b})$,
the commutator \eqref{affine-lb0} can be rewritten as follows
 \begin{align}\label{affine-lb}
[a(z),b(w)]=[a,b](w) z^{-1}\delta\left(\frac{w}{z}\right)+\<a,b\>\mk \frac{\partial}{\partial w} z^{-1}\delta\left(\frac{w}{z}\right) .
\end{align}
This implies that $\widehat{\CL}(\mathfrak{b})$ is a vertex Lie algebra and we have the  affine vertex algebra
\begin{align}\label{eq:affva}V_{\widehat{\CL}(\mathfrak{b})}=\U(\widehat{\CL}(\mathfrak{b}))\otimes_{\U(\mathfrak{b}\ot \C[t])}\C,\end{align}
where ${\bf 1}=1\ot 1$ is the vacuum vector and $Y(a,z)=a(z)$ for $a\in \mathfrak{b}$.
As usual, we  identify $\mathfrak{b}$ as a subspace of $V_{\widehat{\CL}(\mathfrak{b})}$ through the  map $a\mapsto (a\ot t^{-1})\ot1$ for $a\in \mathfrak{b}$.

Let $\sigma$ be a finite order automorphism of $\g$ that preserves the form $\<\cdot,\cdot\>$.
Denote by $T$ the order of $\sigma$.
Then we have the $\sigma$-twisted affine Lie algebra (\cite{K})
\[\widehat{\CL}(\mathfrak{b},\sigma)=\bigoplus_{k=0}^{T-1}\(\mathfrak{b}_{(k)}\ot t^{k}\C[t^T,t^{-T}]\)\oplus\C\mk\subset \widehat{\CL}(\mathfrak{b}),\]
where  $\mathfrak{b}_{(k)}=\{a\in\mathfrak{b}\mid \sigma(a)=q^k a\}$ and $q=e^{\frac{2\pi \sqrt{-1}}{T}}$. Let $\epsilon$ be an integer.
For $a\in \mathfrak{b}_{(k)}$, set
\[a_\sigma(z)=\sum_{n\in\Z}(a\ot t^{k+nT}) z^{-k-nT+\epsilon-1}.\]
For $a\in\mathfrak{b}_{(k)}$ and $b\in \mathfrak{b}_{(l)}$, it is straightforward to check that
\begin{align*}
&[a_\sigma(z),b_\sigma(w)]
=\sum_{s=0}^{T-1}\frac{q^{-ks}}{T}\([a,b]_\sigma(w) z^{\epsilon-1}\delta\left(\frac{q^s w}{z}\right) +\<a,b\>\mk \(w^\epsilon\frac{\partial}{\partial w}\) z^{\epsilon-1}\delta\left(\frac{q^s w}{z}\right)\).
\end{align*}
 This implies that
$(\widehat{\CL}(\mathfrak{b},\sigma),\CA,\epsilon)$ is a quasi vertex Lie algebra with
 $\Gamma_\sigma=\<q\>$ as the associated group, where
\[\CA=\{a_\sigma(z),(\mu\mk)(z):=\mu \mk\mid a\in\mathfrak{b}_{(k)},0\le k\le T-1,\mu\in\C\}.\]

Next we consider the maximality of $\widehat{\CL}(\mathfrak{b},\sigma)$.
 Recall that  $\widetilde{\widehat{\CL}(\mathfrak{b},\sigma)}$ is the complex vector space with a basis
\[\{\widetilde{a}(m)\mid a\in A, m\in\Z\},\]
where $A=\{a,\mu\mk\mid a\in\mathfrak{b}_{(k)},0\le k\le T-1,\mu\in\C\}$. Recall that  $\widetilde{a}(z)=\sum_{m\in \Z} \widetilde{a}(m)z^{-m+\epsilon-1}$ for $a\in A$.
Let $\CR$ be the set consisting of:
\begin{equation*}\begin{split}
&\widetilde{a+a'}(z)-\widetilde{a}(z)-\widetilde{a'}(z),\quad \widetilde{a}(q z)-q^{-k+\epsilon-1}\widetilde{a}(z),\\
 &\widetilde{\mu a}(z)-\mu\widetilde{a}(z),\quad
\widetilde{\mu\mk}(z)-\mu\widetilde{\mk}(z),\quad z^\epsilon\frac{\partial}{\partial z}\widetilde{\mk}(z),\\
\end{split}\end{equation*}
where $a,a'\in \mathfrak{b}_{(k)}, 0\le k\le T-1$ and $\mu\in\C$.
Consider the quotient space  $\widetilde{\widehat{\CL}(\mathfrak{b},\sigma)}/\widetilde{\widehat{\CL}(\mathfrak{b},\sigma)}_{\CR}$, where $\widetilde{\widehat{\CL}(\mathfrak{b},\sigma)}_\CR$ is spanned
by the coefficients of the generating functions in $\CR$. Note that
$\widetilde{\widehat{\CL}(\mathfrak{b},\sigma)}/\widetilde{\widehat{\CL}(\mathfrak{b},\sigma)}_{\CR}$ is spanned by $$\widetilde{a}(k+mT)+\widetilde{\widehat{\CL}(\mathfrak{b},\sigma)}_{\CR},\quad \widetilde{\mk}(\epsilon-1)+\widetilde{\widehat{\CL}(\mathfrak{b},\sigma)}_{\CR},$$
where $a\in \mathfrak{b}_{(k)}, 0\le k\le T-1$ and $m\in\Z$.
 There is a canonical surjective map from
$\widetilde{\widehat{\CL}(\mathfrak{b},\sigma)}/\widetilde{\widehat{\CL}(\mathfrak{b},\sigma)}_{\CR}$ to $\widehat{\CL}(\mathfrak{b},\sigma)$ (see \eqref{eq:canmaxiR}) defined by
\begin{align}\label{eq:maxi-twisted}
\widetilde{a}(k+mT)+\widetilde{\widehat{\CL}(\mathfrak{b},\sigma)}_{\CR}\mapsto a\ot t^{k+mT},\quad \widetilde{\mk}(\epsilon-1)+\widetilde{\widehat{\CL}(\mathfrak{b},\sigma)}_{\CR}\mapsto\mk
\end{align}
for $a\in \mathfrak{b}_{(k)}, 0\le k\le T-1$ and $m\in\Z$.
It is easy to check that \eqref{eq:maxi-twisted} is an isomorphism of vector spaces. Thus from Remark \ref{maximal-equi},  $\widehat{\CL}(\mathfrak{b},\sigma)$ is maximal.

For any $\zeta\in \Z$, from  Theorem \ref{thm:main1} we have a Lie algebra $\widehat{\CL}(\mathfrak{b},\sigma)^{\zeta}$. Note that
\[a^{\alpha,\zeta}(z)=\alpha^{-k+\epsilon-1}a^{1,\zeta}(z),\quad \mk^{\alpha,\zeta}(z)=\mk^{1,\zeta} (\zeta-1) \]
for $a\in\mathfrak{b}_{(k)},\ 0\le k\le T-1$ and $\alpha\in\Gamma$. Then $\widehat{\CL}(\mathfrak{b},\sigma)^{\zeta}$ is spanned by $\mk^{1,\zeta} (\zeta-1)$
and the coefficients of $a^{1,\zeta}(z)$ for  $a\in\mathfrak{b}_{(k)}$, $0\le k\le T-1$. Furthermore,   from \eqref{untwisted-lb} we have
\begin{equation*}\begin{split}
[a^{1,\zeta}(z),b^{1,\zeta}(w)]=&\frac{1}{T}\([a,b]^{1,\zeta}(w) z^{\zeta-1}\delta\left(\frac{w}{z}\right) +\<a,b\>\mk^{1,\zeta}(\zeta-1) \(w^\zeta\frac{\partial}{\partial w}\) z^{\zeta-1}\delta\left(\frac{w}{z}\right) \)
\end{split}\end{equation*}
for  $a\in\mathfrak{b}_{(k)},b\in \mathfrak{b}_{(l)}$, $0\le k,l\le T-1$.
Comparing this with \eqref{affine-lb}, it follows that $\widehat{\CL}(\mathfrak{b},\sigma)^\zeta$ is isomorphic to $\widehat{\CL}(\mathfrak{b})$  with
\[a^{1,\zeta}(z)\mapsto \frac{1}{T}z^\zeta a(z),\quad \mk^{1,\zeta}(\zeta-1)\mapsto \frac{1}{T}\mk\quad\te{for }a\in\mathfrak{b}_{(k)},\ 0\le k\le T-1.\]

From the isomorphism $\widehat{\CL}(\mathfrak{b},\sigma)^0\cong \widehat{\CL}(\mathfrak{b})$,
we can obtain the following result  from the Theorem \ref{thm:main2}  (cf. \cite{Li-adv,CLTW}).

\begin{prpt}  Let $\widehat{\CL}(\mathfrak{b},\sigma)$ be the twisted affine Lie algebra associated to the triple $(\mathfrak{b},\<\cdot,\cdot\>,\sigma)$ as above,  and let $\epsilon$ be an integer. Set $\Gamma_\sigma=\langle q\rangle$, where  $q=e^{\frac{2\pi \sqrt{-1}}{T}}$ and $T$ is the order of $\sigma$. Then there is a  $(\Gamma_\sigma,\epsilon)$-vertex algebra structure on  $V_{\widehat{\CL}(\mathfrak{b})}$
such that $R_q\(a\)=q^{k-\epsilon+1}a$ for $a\in \mathfrak{b}_{(k)}$, $0\le k\le T-1$.
Furthermore,
$\Gamma_\sigma$-equivariant $\phi_\epsilon$-coordinated quasi $V_{\widehat{\CL}(\mathfrak{b})}$-modules are exactly restricted $\widehat{\CL}(\mathfrak{b},\sigma)$-modules.
\end{prpt}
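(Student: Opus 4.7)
The proof plan is to apply Theorem \ref{thm:main2} directly, invoking the preparatory facts already established above: $(\widehat{\CL}(\mathfrak{b},\sigma),\CA,\epsilon)$ is a maximal quasi vertex Lie algebra with associated group $\Gamma_\sigma$, and there is a Lie algebra isomorphism $\widehat{\CL}(\mathfrak{b},\sigma)^0\cong\widehat{\CL}(\mathfrak{b})$ implemented by $a^{1,0}(z)\mapsto\tfrac{1}{T}a(z)$ (for $a\in\mathfrak{b}_{(k)}$) and $\mk^{1,0}(-1)\mapsto\tfrac{1}{T}\mk$. All that remains is to transport the structures of Theorem \ref{thm:main2} along this isomorphism.

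First I would check that the isomorphism identifies the subalgebras $\widehat{\CL}(\mathfrak{b},\sigma)^0_+$ and $\mathfrak{b}\otimes\C[t]$, so that the induced modules coincide. By definition $\widehat{\CL}(\mathfrak{b},\sigma)^0_+$ is spanned by the $a^{1,0}(n)$ and the $\mk^{1,0}(n)$ for $n\ge 0$; but $\mk^{1,0}(z)$ reduces to the constant $\mk^{1,0}(-1)$, so $\mk^{1,0}(n)=0$ for $n\ne-1$, and the spanning set collapses to the image of $\mathfrak{b}\otimes\C[t]$. Consequently the vertex algebra $V_{\widehat{\CL}(\mathfrak{b},\sigma)^0}$ produced by Theorem \ref{thm:main2}(I) is canonically isomorphic, as a vertex algebra, to $V_{\widehat{\CL}(\mathfrak{b})}$ defined in \eqref{eq:affva}, with vacuum vectors corresponding.

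Second, I would compute the $\Gamma_\sigma$-action. From Theorem \ref{thm:main2}(I) we have $R_q(a^{1,0})=a^{q^{-1},0}$ inside $V_{\widehat{\CL}(\mathfrak{b},\sigma)^0}$; using $a^{\alpha,0}(z)=\alpha^{-k+\epsilon-1}a^{1,0}(z)$ for $a\in\mathfrak{b}_{(k)}$, taking the coefficient yielding $a^{\alpha,0}=\alpha^{-k+\epsilon-1}a^{1,0}$, this gives $R_q(a^{1,0})=q^{k-\epsilon+1}a^{1,0}$. Under the identification $a^{1,0}\leftrightarrow\tfrac{1}{T}a$ this reads $R_q(a)=q^{k-\epsilon+1}a$ on $V_{\widehat{\CL}(\mathfrak{b})}$, matching the claim.

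Finally, for the module correspondence I would simply invoke Theorem \ref{thm:main2}(II): since $\widehat{\CL}(\mathfrak{b},\sigma)$ is maximal, restricted $\widehat{\CL}(\mathfrak{b},\sigma)$-modules are in bijection with $\Gamma_\sigma$-equivariant $\phi_\epsilon$-coordinated quasi $V_{\widehat{\CL}(\mathfrak{b},\sigma)^0}$-modules, the correspondence being $Y_W^\epsilon(a^{1,0},z)=a_\sigma(z)$; pulling this across the vertex algebra isomorphism $V_{\widehat{\CL}(\mathfrak{b},\sigma)^0}\cong V_{\widehat{\CL}(\mathfrak{b})}$ produces the desired statement. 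The only potential obstacle is bookkeeping around the factor $\tfrac{1}{T}$ and the handling of the central element $\mk$ when passing between the two presentations, but this is a routine rescaling rather than a substantive difficulty.
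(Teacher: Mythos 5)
Your proposal is correct and follows exactly the route the paper takes: the paper derives this proposition in one line from the maximality of $\widehat{\CL}(\mathfrak{b},\sigma)$, the isomorphism $\widehat{\CL}(\mathfrak{b},\sigma)^0\cong\widehat{\CL}(\mathfrak{b})$, and Theorem \ref{thm:main2}, and your verifications (matching of the positive subalgebras, the computation $R_q(a^{1,0})=a^{q^{-1},0}=q^{k-\epsilon+1}a^{1,0}$ via $a^{\alpha,0}=\alpha^{-k+\epsilon-1}a^{1,0}$, and the transport of the module correspondence) are precisely the details the paper leaves implicit. No gap.
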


\subsection{Quantum torus Lie algebras} Let $N$ be a positive integer and let  $Q=(q_{ij})$ be an $(N+1)\times (N+1)$ matrix such that
$q_{ij}\in \C^\times$, $q_{ii}=1$ and $q_{ij}=q_{ji}^{-1}$ for $0\le i,j\le N$.
Let $\C_Q$ be the quantum torus  associated to $Q$ as defined in \cite{BGK}, that is,  $\C_Q$ is a unital associative algebra with $\C_Q=\C[t_0^{\pm1},t_1^{\pm1},\dots,t_N^{\pm1}]$
as a vector space and $t_it_j=q_{ij}t_j t_i$ for $0\le i,j\le N$.
For $\mb{m}=(m_1,\dots,m_N),\mb{n}=(n_1,\dots,n_N)\in\Z^N$, set \[\mb{t^m}=t_{1}^{m_1}\cdots t_{N}^{m_N},\quad \mb{q^m}=q_{10}^{m_1}\cdots q_{N0}^{m_N}\quad \te{and}\quad\sigma(\mb{m,n})=\prod_{1\le s\le k\le N}q_{ks}^{m_kn_s} .\]
Then $\mb{t^m}t_0=\mb{q^m}t_0\mb{t^m}$ and  $\mb{t^m}\mb{t^n}=\sigma(\mb{m,n})\mb{t^{m+n}}$ for $\mb{m,n}\in\Z^N$.

 Let  $\ell$ be any positive integer. View
$\fgl_\ell(\C)\ot \C_Q$
as a Lie algebra with commutator as its Lie bracket, and consider a one-dimensional central extension:
\[\widehat{\fgl_\ell}(\C_Q)=\(\fgl_\ell(\C)\ot \C_Q\)\oplus\C \mk,\]
where $\mk$ is central and
\begin{equation}\label{lb-glCq-fenliang}\begin{split}
[x\ot t_0^{m}\mb{t^m},y\ot t_0^{n}\mb{t^n}]
=&\,\sigma(\mb{m,n})\mb{q}^{n\mb{m}}xy\ot t_0^{m+n}\mb{t^{m+n}}-\sigma(\mb{n,m})\mb{q}^{m\mb{n}}yx\ot t_0^{m+n}\mb{t^{m+n}}\\
&+\delta_{m,-n}\delta_{\mb{m,-n}}\sigma(\mb{m,n})\mb{q}^{n\mb{m}}\mathrm{Tr}(xy)m\mk,
 \end{split}\end{equation}
where $x,y\in\fgl_\ell(\C)$, $m,n\in\Z$, $\mb{m,n}\in\Z^N$ and $\mathrm{Tr}$ denotes the trace form. Let $\epsilon$ be an integer. Set
\[(x\mb{t^m})(z)=\sum_{m\in\Z}x\ot t_0^{m}\mb{t^m}z^{-n+\epsilon-1}\quad(x\in\fgl_\ell(\C),\mb{m}\in\Z^N).\]
We rewrite \eqref{lb-glCq-fenliang} in terms of the generating functions:
\begin{equation}\label{lb-glCq}\begin{split}
[x\mb{t^m}(z),y\mb{t^n}(w)]&=(\mb{q^{m}})^{\epsilon-1}\mb{\sigma(m,n)}(xy\mb{t^{m+n}})(\mb{q^{-m}}w)  z^{\epsilon-1}\delta\left(\frac{\mb{q^{-m}} w}{z}\right)\\
&\quad-\mb{\sigma(n,m)}(yx\mb{t^{m+n}})(w) z^{\epsilon-1}\delta\left(\frac{\mb{q^{n}} w}{z}\right)\\
&\quad+\mb{\sigma(m,n)}\mathrm{Tr}(xy)\delta_{\mb{m,-n}}\mk\(w^\epsilon\frac{\partial}{\partial w}\) z^{\epsilon-1}\delta\left(\frac{\mb{q^{-m}} w}{z}\right) .
\end{split}\end{equation}
Similar to the analysis as the twisted affine Lie algebras, we see that $(\widehat{\fgl_\ell}(\C_Q),\CA,\epsilon)$ is a maximal  quasi vertex Lie algebra with the associated group $\Gamma_Q=\{\mb{q^m}\mid \mb{m}\in\Z^N\}$,
where $$\CA=\{(x\mb{t^m})(z),(\mu\mk)(z):=\mu\mk\mid x\in\fgl_\ell(\C),\mb{m}\in\Z^N,\mu\in\C\}.$$

For $1\le i,j\le \ell$, let
 $E_{i,j}\in\fgl_\ell(\C)$ be  the elementary matrix having $1$ in $(i,j)$-position and $0$ elsewhere.
It is routine to check that $\widehat{\fgl_\ell}(\C_Q)^{\zeta}$ (see Theorem \ref{thm:main1}) has a basis
 \[(E_{i,j}\mb{t^m})^{\alpha,\zeta}(m),\ \mk^{1,\zeta}(\zeta-1)\quad\te{ for } 1\le i,j\le \ell,\ \mb{m}\in\Z^N,\alpha\in\Gamma_Q,m\in\Z,\]
where $\mk^{1,\zeta}(\zeta-1)$ is central element and
\begin{align}\label{lb-glCq-zeta}\begin{split}
&[(E_{i,j}\mb{t^m})^{\alpha,\zeta}(z),(E_{i',j'}\mb{t^n})^{\beta,\zeta}(w)]\\
=&\,\delta_{\beta\alpha^{-1},\mb{q^{m}}}\delta_{j,i'}\beta^{\epsilon-1}\mb{\sigma(m,n)}(E_{i,j'}\mb{t^{m+n}})^{\alpha,\zeta}(w) z^{\zeta-1}\delta\left(\frac{w}{z}\right)\\
&-\delta_{\alpha\beta^{-1},\mb{q^{n}}}\delta_{j',i}\alpha^{\epsilon-1}\mb{\sigma(n,m)}(E_{i',j}\mb{t^{m+n}})^{\beta,\zeta}(w) z^{\zeta-1}\delta\left(\frac{w}{z}\right)\\
&+\delta_{\beta\alpha^{-1},\mb{q^{m}}}\delta_{j,i'}\delta_{j',i}(\alpha\beta)^{\epsilon-1}\mb{\sigma(m,n)}\delta_{\mb{m,-n}}\mk^{1,\zeta}(\zeta-1)\(w^\zeta\frac{\partial}{\partial w}\) z^{\zeta-1}\delta\left(\frac{w}{z}\right)
\end{split}\end{align}
for $1\le i,j,i',j'\le \ell,\alpha,\beta\in\Gamma_Q$ and $\mb{m,n}\in\Z^N$.

Let $\fgl_{\ell,Q}$ be a vector space with a basis \[E_{i,j}^{\mb{m},\alpha}\quad \te{for } 1\le i,j\le \ell,\ \mb{m}\in\Z^N,\alpha\in\Gamma_Q.\]
We define a  multiplication on $\fgl_{\ell,Q}$ by
\begin{align}\label{mul-gl-N-Gamma}
&E_{i,j}^{\mb{m},\alpha}\cdot E_{i',j'}^{\mb{n},\beta}=\delta_{\beta\alpha^{-1},\mb{q^{m}}}\delta_{j,i'}\mb{\sigma(m,n)}E_{i,j'}^{\mb{m+n},\alpha},
\end{align}
and define a symmetric bilinear form  on $\fgl_{\ell,Q}$ by
\begin{align}\label{form-gl-N-Gamma}
\<E_{i,j}^{\mb{m},\alpha}, E_{i',j'}^{\mb{n},\beta}\>=\delta_{\beta\alpha^{-1},\mb{q^{m}}}\delta_{j,i'}\delta_{j',i}\mb{\sigma(m,n)}\delta_{\mb{m,-n}},
\end{align}
 where $1\le i,j,i',j'\le \ell,\ \mb{m,n}\in\Z^N$ and $\alpha,\beta\in\Gamma_Q$.
 It is straightforward to check that $\fgl_{\ell,Q}$ is an associative algebra under the multiplication \eqref{mul-gl-N-Gamma}, and the form $\<\cdot,\cdot\>$ is (associative) invariant.
View $\fgl_{\ell,Q}$ as  a Lie algebra,
associated to the pair $(\fgl_{\ell,Q},\<\cdot,\cdot\>)$, we have an affine Lie algebra $\widehat{\CL}(\fgl_{\ell,Q})$.

By using \eqref{affine-lb} and \eqref{lb-glCq-zeta}-\eqref{form-gl-N-Gamma}, one can check that  $\widehat{\CL}(\fgl_{\ell,Q})$ is isomorphic to  $\widehat{\fgl_\ell}(\C_Q)^{\zeta}$ with the mapping  $\mk\mapsto\mk^{1,\zeta}(\zeta-1)$ and $E_{i,j}^{\mb{m},\alpha}\ot t^m\mapsto \alpha^{1-\epsilon}(E_{i,j}\mb{t^m})^{\alpha,\zeta}(m)$ for $1\le i,j\le \ell,\mb{m}\in\Z^N$, $\alpha\in\Gamma_Q$ and $m\in\Z$.
In particular, from isomorphism $\widehat{\CL}(\fgl_{\ell,Q})\cong \widehat{\fgl_\ell}(\C_Q)^{0}$, we have the following result from
Theorem \ref{thm:main2}.

\begin{prpt}\label{prop:quantumtours} There is a $(\Gamma_Q,\epsilon)$-vertex algebra structure on $V_{\widehat{\CL}(\fgl_{\ell,Q})}$ with
 $R_{\lambda}\(E_{i,j}^{\mb{m},\alpha}\)=\lambda^{1-\epsilon} E_{i,j}^{\mb{m},\alpha\lambda^{-1}}$
 for $1\le i,j\le \ell,\mb{m}\in\Z^N$ and $\alpha,\lambda\in\Gamma_Q.$
Furthermore, $\Gamma_Q$-equivariant $\phi_\epsilon$-coordinated quasi $V_{\widehat{\CL}(\fgl_{\ell,Q})}$-modules are exactly restricted $\widehat{\fgl_\ell}(\C_Q)$-modules.
\end{prpt}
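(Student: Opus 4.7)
The plan is to apply Theorem~\ref{thm:main2} to the maximal quasi vertex Lie algebra $(\widehat{\fgl_\ell}(\C_Q),\CA,\epsilon)$ and transport the resulting structures across the Lie algebra isomorphism $\Psi:\widehat{\CL}(\fgl_{\ell,Q}) \xrightarrow{\sim} \widehat{\fgl_\ell}(\C_Q)^0$ exhibited in the paragraph preceding the statement. All the heavy lifting has already been performed in Sections~2 and 3; what remains is essentially bookkeeping.

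First, I would observe that $\Psi$ identifies the ``nonnegative mode'' subalgebras on both sides, since $E_{i,j}^{\mb{m},\alpha}\ot t^n \mapsto \alpha^{1-\epsilon}(E_{i,j}\mb{t^m})^{\alpha,0}(n)$ and, because $\bigl(\tfrac{\partial}{\partial z}\bigr)\mk^{1,0}(z)\in \ker\bar\psi^0$, the central element satisfies $\mk^{1,0}(n)=0$ for all $n\ne -1$. Consequently $\Psi$ lifts to an isomorphism of induced modules $\bar\Psi:V_{\widehat{\CL}(\fgl_{\ell,Q})}\to V_{\widehat{\fgl_\ell}(\C_Q)^0}$ sending vacuum to vacuum. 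By Theorem~\ref{thm:main2}(I), the right-hand side carries a canonical $(\Gamma_Q,\epsilon)$-vertex algebra structure with $Y((E_{i,j}\mb{t^m})^{\alpha,0},z)=(E_{i,j}\mb{t^m})^{\alpha,0}(z)$ and $R_\lambda((E_{i,j}\mb{t^m})^{\alpha,0})=(E_{i,j}\mb{t^m})^{\alpha\lambda^{-1},0}$. Pulling back along $\bar\Psi$ produces the desired $(\Gamma_Q,\epsilon)$-vertex algebra structure on $V_{\widehat{\CL}(\fgl_{\ell,Q})}$.

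Next, I would compute $R_\lambda$ in the original coordinates to match the stated formula. The generator $E_{i,j}^{\mb{m},\alpha}\in\fgl_{\ell,Q}\subset V_{\widehat{\CL}(\fgl_{\ell,Q})}$ equals $(E_{i,j}^{\mb{m},\alpha}\ot t^{-1})\vac$, which under $\bar\Psi$ is sent to $\alpha^{1-\epsilon}(E_{i,j}\mb{t^m})^{\alpha,0}(-1)\vac = \alpha^{1-\epsilon}(E_{i,j}\mb{t^m})^{\alpha,0}$. Applying $R_\lambda$ on the right-hand side and pulling back via $\bar\Psi^{-1}$ then yields
\begin{align*}
R_\lambda\bigl(E_{i,j}^{\mb{m},\alpha}\bigr)
=\bar\Psi^{-1}\bigl(\alpha^{1-\epsilon}(E_{i,j}\mb{t^m})^{\alpha\lambda^{-1},0}\bigr)
=\alpha^{1-\epsilon}(\alpha\lambda^{-1})^{\epsilon-1}E_{i,j}^{\mb{m},\alpha\lambda^{-1}}
=\lambda^{1-\epsilon}E_{i,j}^{\mb{m},\alpha\lambda^{-1}},
\end{align*}
as claimed.

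Finally, for the module correspondence I would invoke Theorem~\ref{thm:main2}(II): since $(\widehat{\fgl_\ell}(\C_Q),\CA,\epsilon)$ is maximal, the $\Gamma_Q$-equivariant $\phi_\epsilon$-coordinated quasi $V_{\widehat{\fgl_\ell}(\C_Q)^0}$-modules are precisely the restricted $\widehat{\fgl_\ell}(\C_Q)$-modules, via $(E_{i,j}\mb{t^m})(z)=Y_W^\epsilon((E_{i,j}\mb{t^m})^{1,0},z)$. Transporting along $\bar\Psi$ gives the stated bijection for $V_{\widehat{\CL}(\fgl_{\ell,Q})}$. The only real difficulty is tracking the scalar $\alpha^{1-\epsilon}$ arising from $\Psi$ when translating between the two coordinate systems; no further substantive computation is needed.
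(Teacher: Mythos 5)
Your proposal is correct and follows essentially the same route as the paper: the paper likewise deduces the proposition by establishing the isomorphism $\widehat{\CL}(\fgl_{\ell,Q})\cong\widehat{\fgl_\ell}(\C_Q)^{0}$ (via $\mk\mapsto\mk^{1,0}(-1)$, $E_{i,j}^{\mb{m},\alpha}\ot t^{n}\mapsto\alpha^{1-\epsilon}(E_{i,j}\mb{t^{m}})^{\alpha,0}(n)$) and then invoking Theorem~\ref{thm:main2}. Your explicit tracking of the scalar $\alpha^{1-\epsilon}$ to recover $R_{\lambda}(E_{i,j}^{\mb{m},\alpha})=\lambda^{1-\epsilon}E_{i,j}^{\mb{m},\alpha\lambda^{-1}}$ is exactly the bookkeeping the paper leaves implicit, and it checks out.
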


\begin{remt} When $\epsilon=0$, $N=1$, and $q_{10}$ not a root of unity, Proposition \ref{prop:quantumtours} was obtained in \cite{Li3} (see also \cite{LTW}).
In this case, $\fgl_{\ell,Q}$  is isomorphic to $\fgl_\infty$ (\cite{Li3}).
\end{remt}

\begin{remt} In the case that $\g=\wh{\CL}(\mathfrak{b},\sigma)$ or $\widehat{\fgl_\ell}(\C_Q)$,
 for any $\epsilon\in\Z$ and certain group $\Gamma$,
 there is a  canonical quasi vertex Lie algebra structure on $\g$ and
 restricted $\g$-modules are exactly $\Gamma$-equivariant $\phi_\epsilon$-coordinated quasi $V_{\g^0}$-modules.
These results  also true for  $\g$  being $q$-Virasoro algebras (see \cite{GLTW1,GLTW2}) and unitary Lie algebras (see \cite{GW}).  Specifically, if we take $\epsilon=0$ or 1, these results are the main results in  \cite{GLTW1,GLTW2,GW}.
\end{remt}

\subsection{$q$-Heisenberg Lie algebras }
In this subsection, let $q$ be a nonzero complex number with $q\neq \pm 1$.
Consider the $q$-Heisenberg Lie algebra (cf. \cite{FR,Li-FMC})
\[H_q=\oplus_{m\in\Z}\C a(m)\oplus \C \mb{c},\]
where $\mb{c}$ is central and for $m,n\in \Z$,
\[[a(m),a(n)]=\frac{q^m-q^{-m}}{q-q^{-1}}\delta_{m,-n}\mb{c}.\]
Equivalently, by setting $a(z)=\sum_{m\in\Z}a(m)z^{-m}$, we have
 \[[a(z),a(w)]=\frac{1}{q-q^{-1}}\mb{c}\(\delta\left(\frac{qw}{z}\right) - \delta\left(\frac{q^{-1}w}{z}\right) \).\]
Then $(H_q,\CA,1)$ is a maximal quasi vertex Lie algebra with $\Gamma_q=\<q\>$ as the associated group, where $\CA=\{a(z),(\mu\mb{c})(z):=\mu\mb{c}\mid \mu\in\C\}$.
 Furthermore, the Lie algebra $H_q^{\zeta}$ has  a basis \[\{a^{\alpha,\zeta}(m),\ \mb{c}^{1,\zeta}(\zeta-1)\mid \alpha\in\Gamma_q, m\in\Z\}\]
such that $\mb{c}^{1,\zeta}(\zeta-1)$ is  central and for $\alpha,\beta\in\Gamma_q,m,n\in\Z$,
\begin{align*}
&[a^{\alpha,\zeta}(m+\zeta),a^{\beta,\zeta}(n+\zeta)]
=\frac{1}{q-q^{-1}}\(\delta_{\alpha\beta^{-1},q}-\delta_{\alpha\beta^{-1},q^{-1}}\)\delta_{m+n+\zeta+1,0}\mb{c}^{1,\zeta}(\zeta-1).
\end{align*}

Let $H$ be a vector space equipped with a basis  $\{b^\alpha\mid \alpha\in\Gamma\}$ and a skew-symmetric bilinear form $\<\cdot,\cdot\>$ such that
\begin{align}\label{4-4}
\<b^\alpha,b^\beta\>=\frac{1}{q-q^{-1}}\(\delta_{\alpha\beta^{-1},q}-\delta_{\alpha\beta^{-1},q^{-1}}\)\quad (\alpha,\beta\in\Gamma_q).
\end{align}
We associate a Heisenberg Lie algebra $\widehat{H}=(H\ot \C[t,t^{-1}])\oplus \C \mb{c}$ with $(H,\<\cdot,\cdot\>)$ such that
\begin{align*}
[\mb{c},\widehat{H}]=0\quad\text{and}\quad [b^\alpha\ot t^m,b^\beta\ot t^n]=\delta_{n+m+1,0}\<b^\alpha,b^\beta\>\mb{c}
\end{align*}
for $\alpha,\beta\in\Gamma_q$ and $m,n\in\Z$.
Note that $\wh{H}$ is a vertex Lie algebra and as in \eqref{eq:affva} we have the Heisenberg vertex algebra
\begin{align*}
V_{\widehat{H}}= \U(\widehat{H})\otimes_{\U(\widehat{H}_+)}\C,
\end{align*}
on which $Y(b^\alpha,z)=\sum_{m\in\Z}b^\alpha\ot t^m z^{-m-1}$ for $\alpha\in \Gamma_q$,
where $\widehat{H}_+=\sum_{\alpha\in \Gamma,m\ge 0}\C b^\alpha\ot t^m$, $\C$ is the trivial $\widehat{H}_+$-module and
$b^\alpha=(b^{\alpha}\ot t^{-1})\ot1\in V_{\widehat{H}}$.

Note that the Heisenberg Lie algebra  $\widehat{H}$ is isomorphic to $H_q^{0}$ with $\mb{c}\mapsto \mb{c}^{1,0}(-1)$ and $b^\alpha\ot t^m\mapsto a^{\alpha,0}(m)$ for $\alpha\in\Gamma_q, m\in\Z$. We have the following result by applying  Theorem \ref{thm:main2}, which was also obtained in \cite{Li-FMC}.

\begin{prpt} There is a $(\Gamma_q,1)$-vertex algebra structure on $V_{\wh{H}}$
such that $R_{\lambda}\(b^{\alpha}\)=b^{\alpha\lambda^{-1}}$ for $\alpha,\lambda\in \Gamma_q$.
Furthermore, $\Gamma_q$-equivariant $\phi_1$-coordinated quasi $V_{\wh{H}}$-modules are exactly restricted $H_q$-modules.
\end{prpt}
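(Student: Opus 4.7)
The plan is to deduce this proposition as a direct corollary of Theorem \ref{thm:main2} applied to the maximal quasi vertex Lie algebra $(H_q,\mathcal{A},1)$ already identified in the excerpt. The crucial preliminary step is to exhibit a Lie algebra isomorphism between $\widehat{H}$ and $H_q^{0}$, so that the universal enveloping vertex algebra $V_{H_q^0}$ from Theorem \ref{thm:main2} coincides with the Heisenberg vertex algebra $V_{\widehat{H}}$.

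First I would instantiate Theorem \ref{thm:main1} at $\zeta = 0$ with $\epsilon = 1$. By the general construction $H_q^0$ has as a basis the elements $a^{\alpha,0}(m)$ $(\alpha\in\Gamma_q,\,m\in\Z)$ together with the central element $\mathbf{c}^{1,0}(-1)$, and the defining formula \eqref{untwisted-lb} reduces (after setting $\zeta=0$ and reading off the coefficients in $[a(z),a(w)]$) to
\begin{align*}
[a^{\alpha,0}(m),a^{\beta,0}(n)] = \frac{1}{q-q^{-1}}\bigl(\delta_{\alpha\beta^{-1},q}-\delta_{\alpha\beta^{-1},q^{-1}}\bigr)\delta_{m+n+1,0}\,\mathbf{c}^{1,0}(-1).
\end{align*}
Comparing this with \eqref{4-4} and the defining relations of $\widehat{H}$, the assignment
\[
\mathbf{c}\longmapsto \mathbf{c}^{1,0}(-1),\qquad b^\alpha\otimes t^m\longmapsto a^{\alpha,0}(m)\quad(\alpha\in\Gamma_q,\ m\in\Z)
\]
is visibly a linear bijection and intertwines the brackets, hence a Lie algebra isomorphism $\widehat{H}\xrightarrow{\sim} H_q^0$. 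Since this isomorphism maps $\widehat{H}_+$ onto $H_q^0_+$ and preserves the vacuum, it extends to an isomorphism of induced modules $V_{\widehat{H}}\xrightarrow{\sim}V_{H_q^0}$, and under the extension $b^\alpha\mapsto a^{\alpha,0}$.

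With this identification in hand, part (I) of Theorem \ref{thm:main2} equips $V_{\widehat{H}}=V_{H_q^0}$ with a unique $(\Gamma_q,1)$-vertex algebra structure for which $\mathbf{1}$ is the vacuum vector, $Y(b^\alpha,z)=Y(a^{\alpha,0},z)=a^{\alpha,0}(z)$ and the group action is given by $R_\lambda(a^{\alpha,0})=a^{\alpha\lambda^{-1},0}$; translating back via the isomorphism yields $R_\lambda(b^\alpha)=b^{\alpha\lambda^{-1}}$, as required. For the module correspondence, recall from the excerpt that $(H_q,\mathcal{A},1)$ has been shown to be maximal, so Theorem \ref{thm:main2}(II) applies in both directions: every restricted $H_q$-module $W$ acquires a canonical $\Gamma_q$-equivariant $\phi_1$-coordinated quasi $V_{\widehat{H}}$-module structure with $Y_W^{1}(b^1,z)=Y_W^{1}(a^{1,0},z)=a(z)$, and conversely every such quasi module recovers a restricted $H_q$-module by the same formula.

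There is essentially no obstacle beyond bookkeeping: the only computational point is verifying the bracket formula for $H_q^0$, and this is a direct specialization of \eqref{untwisted-lb} with $\epsilon=1$, $\zeta=0$, using the explicit Lie relations of $H_q$ rewritten in the $\delta$-function form given just before the definition of $\Gamma_q$. Everything else is a formal application of the main theorems together with the maximality of $(H_q,\mathcal{A},1)$ asserted in the excerpt.
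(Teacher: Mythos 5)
Your proposal is correct and follows essentially the same route as the paper: the paper likewise identifies $\widehat{H}\cong H_q^{0}$ via $\mathbf{c}\mapsto \mathbf{c}^{1,0}(-1)$, $b^\alpha\otimes t^m\mapsto a^{\alpha,0}(m)$ and then invokes Theorem \ref{thm:main2} together with the maximality of $(H_q,\mathcal{A},1)$. The only difference is that you spell out the verification that the bracket of $H_q^{0}$ matches that of $\widehat{H}$, which the paper leaves implicit.
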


\begin{remt} Similar to the Lie algebras $\wh{\CL}(\mathfrak{b},\sigma)$ and $\wh{\fgl_\ell}(\C_Q)$,
for any integer $\epsilon\in \Z$,  $H_q$ is a quasi  vertex  Lie algebra with the generating functions $\mb{c}(z)=\mb{c}z^{\epsilon-1}$ and $a(z)=\sum_{m\in\Z}a(m)z^{-m+\epsilon-1}$.
Recall that for $\g=\wh{\CL}(\mathfrak{b},\sigma)$ (resp. $\wh{\fgl_\ell}(\C_Q)$), $\g^0$ is isomorphic to $\wh{\CL}(\mathfrak{b})$ (resp. $\fgl_{\ell, Q}$) for any $\epsilon\in\Z$.
However, for the Lie algebra $H_q$,  $H_q^0$ has infinite-dimensional center when $\epsilon\neq1$, while it has one-dimensional center when $\epsilon=1$.
\end{remt}

\subsection{Virasoro-like algebras} In this subsection, we consider the Virasoro-like algebra
 \[\mathcal{VL}=\oplus_{m_1,m_2\in \Z} \C L_{m_1,m_2}\oplus \C \mb{c},\]
where $\mb{c}$ is a central element and for any $m_1,m_2,n_1,n_2\in\Z$,
\begin{align}\label{lb-virasoro-fenliang}
[L_{m_1,m_2},L_{n_1,n_2}]=(m_1n_2-m_2n_1)L_{m_1+n_1,m_2+n_2}+\delta_{m_1,-n_1}\delta_{m_2,-n_2}m_1\mb{c}.
\end{align}
Set $L_m(z)=\sum_{n\in\Z}L_{n,m}z^{-n}$ for $ m\in \Z$.
Then \eqref{lb-virasoro-fenliang} is equivalent to:
\begin{align*}
[L_m(z),L_n(w)]
=&(m+n)L_{m+n}(w) \(w\frac{\partial}{\partial w}\) \delta\left(\frac{w}{z}\right)+m\(w\frac{\partial}{\partial w}L_{m+n}(w)\) \delta\left(\frac{w}{z}\right) \\
&+\delta_{m,-n}\mb{c} \(w\frac{\partial}{\partial w}\) \delta\left(\frac{w}{z}\right).
\end{align*}
It follows that  $(\mathcal{VL},\CA,1)$ is a maximal quasi vertex Lie algebra with the trivial group $\{1\}$ as the associated group, where $$\CA=\{nL_m(z),(n\mb{c})(z):=n\mb{c}\mid m,n\in\Z\}.$$
And for any $\zeta\in \Z$, the Lie algebra $\mathcal{VL}^{\zeta}$ (see Theorem \ref{thm:main1}) admits a basis
 \[L_m^{1,\zeta}(n),\ \mb{c}^{1,\zeta}(\zeta-1)\quad\te{ for } m,n\in\Z,\]
such that $\mb{c}^{1,\zeta}(\zeta-1)$ is central and for $m,n,m',n'\in\Z$,
\begin{align*}
&\,[L_{m}^{1,\zeta}(m'-\zeta+1),L_n^{1,\zeta}(n'-\zeta+1)]\\
=&\,\((m'-\zeta+1)n-m(n'-\zeta+1)\)L_{m+n}^{1,\zeta}(m'+n'-\zeta+1)\\
&\,+\delta_{m+n,0}\delta_{m'+n'-2(\zeta-1),0}(m'-\zeta+1)\mb{c}^{1,\zeta}(\zeta-1).
\end{align*}

We consider a variant of Lie algebra $\mathcal{VL}$ as follows:
\[\mathcal{VL}'=\oplus_{m_1,m_2\in \Z} \C L_{m_1,m_2}'\oplus \C \mb{c}',\]
where $\mb{c}'$ is a central element and for any $m_1,m_2,n_1,n_2\in\Z$,
\begin{align*}
[L_{m_1,m_2}',L_{n_1,n_2}']=((m_1+1)n_2-m_2(n_1+1))L_{m_1+n_1,m_2+n_2}'+\delta_{m_2+n_2,0}\delta_{m_1+n_1+2,0}(m_1+1)\mb{c}'.
\end{align*}
We see that  the Lie algebra $\mathcal{VL}'$ is isomorphic to $\mathcal{VL}^0$ with
$L'_{m_1,m_2}\mapsto L_{m_2}^{1,0}(m_1+1)$ and $\mb{c}'\mapsto \mb{c}^{1,0}(-1)$.
Let $\C$ be the trivial $\mathcal{VL}'_+=\sum_{m_1\ge -1,m_2\in \Z}\C L_{m_1,m_2}'$-module and form the induced module
\[ V_{\mathcal{VL}'}=\U(\mathcal{VL}')\otimes_{\U(\mathcal{VL}'_+)}\C.\]
Set ${\bf 1}=1\otimes 1$, $L'_m=L'_{-2,m}\ot 1$ and $L_m'(z)=\sum_{n\in \Z}L_{n,m}'z^{-n-2}$ for $m\in \Z$. Then from the
 Theorem \ref{thm:main2} we have the following result, which was also obtained in \cite{BLP}.

\begin{prpt} There is a vertex algebra structure on $V_{\mathcal{VL}'}$ with $Y(L_m',z)=L_m'(z)$ for $m\in \Z$.
Furthermore, $\phi_1$-coordinated  $V_{\mathcal{VL}'}$-modules are exactly  restricted $\mathcal{VL}$-modules.
\end{prpt}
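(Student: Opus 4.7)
The plan is to deduce this from Theorem \ref{thm:main2} applied to the maximal quasi vertex Lie algebra $(\mathcal{VL},\CA,1)$ established immediately above. Because the associated group $\Gamma=\{1\}$ is trivial, a $(\Gamma,1)$-vertex algebra is nothing but an ordinary vertex algebra, and a $\Gamma$-equivariant $\phi_1$-coordinated quasi module coincides with a $\phi_1$-coordinated module in the sense noted after Definition \ref{def:geva}, so all $\Gamma$-qualifiers will disappear automatically.

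The text just before the statement records that the assignment $L'_{m_1,m_2}\mapsto L^{1,0}_{m_2}(m_1+1)$, $\mb{c}'\mapsto \mb{c}^{1,0}(-1)$ is a Lie algebra isomorphism $\mathcal{VL}'\cong \mathcal{VL}^0$; this is immediate from the explicit commutator of $\mathcal{VL}^\zeta$ at $\zeta=0$. Under this isomorphism the subalgebra $\mathcal{VL}'_+=\sum_{m_1\geq -1,m_2\in\Z}\C L'_{m_1,m_2}$ is sent precisely onto $\mathcal{VL}^0_+$ of \eqref{intro:g0+}, so $V_{\mathcal{VL}'}$ is canonically identified, vacuum-to-vacuum, with $V_{\mathcal{VL}^0}$ as an induced module. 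Transporting along this identification the vertex algebra structure produced by Theorem \ref{thm:main2}\,(I) equips $V_{\mathcal{VL}'}$ with a (unique) vertex algebra structure.

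To confirm the generating-field formula $Y(L'_m,z)=L'_m(z)$, I would observe that $L'_m=L'_{-2,m}\ot 1$ corresponds under the identification to $L^{1,0}_m(-1)\ot 1=L^{1,0}_m$, the element appearing in Theorem \ref{thm:main2}. That theorem then gives $Y(L^{1,0}_m,z)=L^{1,0}_m(z)=\sum_{n\in\Z}L^{1,0}_m(n)z^{-n-1}$, and the correspondence $L^{1,0}_m(n)\leftrightarrow L'_{n-1,m}$ followed by the shift $n\mapsto n+1$ rewrites this as $\sum_{n\in\Z}L'_{n,m}z^{-n-2}=L'_m(z)$, exactly as required.

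The module correspondence then follows at once from Theorem \ref{thm:main2}\,(II): maximality of $\mathcal{VL}$ yields a bijection between restricted $\mathcal{VL}$-modules and $\Gamma$-equivariant $\phi_1$-coordinated quasi $V_{\mathcal{VL}^0}$-modules, which, since $\Gamma=\{1\}$, are just $\phi_1$-coordinated modules over $V_{\mathcal{VL}^0}\cong V_{\mathcal{VL}'}$. No substantive obstacle arises; the argument is purely a specialization of Theorem \ref{thm:main2} to the trivial group $\Gamma=\{1\}$, combined with the routine index bookkeeping encoded in the isomorphism $\mathcal{VL}'\cong \mathcal{VL}^0$.
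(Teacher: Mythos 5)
Your proposal is correct and follows exactly the route the paper takes: it establishes the Lie algebra isomorphism $\mathcal{VL}'\cong\mathcal{VL}^0$ (matching $\mathcal{VL}'_+$ with $\mathcal{VL}^0_+$ so that $V_{\mathcal{VL}'}\cong V_{\mathcal{VL}^0}$) and then specializes Theorem \ref{thm:main2} to the maximal quasi vertex Lie algebra $(\mathcal{VL},\CA,1)$ with trivial associated group. The index bookkeeping $L^{1,0}_m(n)\leftrightarrow L'_{n-1,m}$ giving $Y(L'_m,z)=L'_m(z)$ is also exactly what the paper intends.
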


\subsection{Klein bottle Lie
algebras} We consider the involution $\sigma$ of $\mathcal{VL}$ defined by
$$\sigma(\mb{c})=\mb{c},\quad \sigma(L_{m_1,m_2})=-(-1)^{m_1}L_{m_1,-m_2} \quad \te{for }m_1,m_2\in\Z.$$
Denote by $\mathcal{B}$ the $\sigma$-fixed point subalgebra of $\mathcal{VL}$, which is a one-dimensional central extension of
the Klein bottle Lie algebra (\cite{JJP,PR}).
 Set $$B_{m_1,m_2}=L_{m_1,m_2}-(-1)^{m_1}L_{m_1,-m_2}\quad\text{for}\ m_1,m_2\in \Z,$$
which together with $\mb{c}$ span the Lie algebra $\mathcal{B}$.
Note that we have $B_{m_1,m_2}=-(-1)^{m_1}B_{m_1,-m_2}$, and
\begin{align*}
[B_{m_1,m_2},B_{n_1,n_2}]
=&(m_1n_2-m_2n_1)B_{m_1+n_1,m_2+n_2}-(-1)^{m_1}(m_1n_2+m_2n_1)B_{m_1+n_1,n_2-m_2}\\
&+2(\delta_{m_2,-n_2}-(-1)^{m_1} \delta_{m_2,n_2})\delta_{m_1,-n_1}m_1\mb{c}
\end{align*}
for $m_1,m_2,n_1,n_2\in\Z$. In terms of the generating functions $B_m(z)=\sum_{n\in\Z}B_{n,m}z^{-n}$ for $m\in\Z$, we have
  $B_m(-z)=-B_{-m}(z)$ and
\begin{align*}
\,[B_m(z),B_n(w)]
=&\,(m+n)B_{m+n}(w) \(w\frac{\partial}{\partial w}\) \delta\left(\frac{w}{z}\right) +m\(w\frac{\partial}{\partial w}B_{m+n}(w)\)  \delta\left(\frac{w}{z}\right) \\
&+(m-n)B_{n-m}(w) \(w\frac{\partial}{\partial w}\) \delta\left(\frac{-w}{z}\right)+m\(w\frac{\partial}{\partial w}B_{n-m}(w)\) \delta\left(\frac{-w}{z}\right) \\
&+2\delta_{m,-n}\mb{c} \(w\frac{\partial}{\partial w}\) \delta\left(\frac{w}{z}\right) -2\delta_{m,n}\mb{c} \(w\frac{\partial}{\partial w}\) \delta\left(\frac{-w}{z}\right).
\end{align*}
Then $(\mathcal{B},\CA,1)$ is a maximal quasi vertex Lie algebra with $\Gamma_\sigma=\{\pm 1\}$ as the associated group, where
$$\CA=\{nB_m(z), (n\mb{c})(z):=n\mb{c}\mid m,n\in\Z\}.$$

  Note that the following relations hold in $\mathcal{B}^\zeta$ ($\zeta\in\Z$):
\[\mb{c}^{\pm1,\zeta}(z)=\mb{c}^{1,\zeta} (\zeta-1),\quad B_m^{-1,\zeta}(z)=-B^{1,\zeta}_{-m}(z)\quad\te{for }m\in\Z.\]
This implies that the Lie algebra $\mathcal{B}^{\zeta}$ has a basis $\mb{c}^{1,\zeta} (\zeta-1)$, $B_m^{1,\zeta}(n)$ for
$ m,n\in\Z$.  It is straightforward to check that
 the Lie algebra $\mathcal{VL}^\zeta$ is isomorphic to $\mathcal{B}^\zeta$ with  the isomorphism given by  $\mb{c}^{1,\zeta}(\zeta-1)\mapsto2\mb{c}^{1,\zeta}(\zeta-1)$ and $L_m^{1,\zeta}(n)\mapsto B_m^{1,\zeta}(n)$ for $m,n\in\Z$.
 In particular, when $\zeta=0$, we have $\mathcal{B}^0\cong \mathcal{VL}'$.
From the Theorem \ref{thm:main2} we immediately have the following result.

\begin{prpt} There is a $(\Gamma_\sigma, 1)$-vertex algebra structure on $V_{\mathcal{VL}'}$ with $R_{-1}\(L_m'\)=-L_{-m}'$
for $m\in\Z$. Furthermore,
$\Gamma_\sigma$-equivariant $\phi_1$-coordinated quasi $V_{\mathcal{VL}'}$-modules are exactly restricted $\mathcal{B}$-modules.
\end{prpt}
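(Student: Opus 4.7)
The plan is to reduce the statement to Theorem~\ref{thm:main2} applied to the maximal quasi vertex Lie algebra $(\mathcal{B},\CA,1)$ with associated group $\Gamma_\sigma=\{\pm1\}$, together with the identification $\mathcal{B}^0\cong\mathcal{VL}'$ noted just above the statement. First I would invoke Theorem~\ref{thm:main2}(I) for $(\mathcal{B},\CA,1)$, which endows $V_{\mathcal{B}^0}$ with a $(\Gamma_\sigma,1)$-vertex algebra structure in which $Y(a^{\alpha,0},z)=a^{\alpha,0}(z)$ and $R_\lambda(a^{\alpha,0})=a^{\alpha\lambda^{-1},0}$ for $a\in\{B_m,\mb{c}\}$ and $\alpha,\lambda\in\Gamma_\sigma$.

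Next I would transport this structure to $V_{\mathcal{VL}'}$ along the composed Lie algebra isomorphism $\mathcal{B}^0\cong\mathcal{VL}^0\cong\mathcal{VL}'$, where the first arrow is the inverse of $L_m^{1,0}(n)\mapsto B_m^{1,0}(n)$ from Section~4.5 and the second is $L_m^{1,0}(n)\mapsto L'_{n-1,m}$ from Section~4.4, so the composite sends $B_m^{1,0}(n)\mapsto L'_{n-1,m}$. Using the quotient relations $\mb{c}^{1,0}(n)=0$ for $n\ne -1$ and $B_m^{-1,0}(z)=-B_{-m}^{1,0}(z)$ in $\mathcal{B}^0$ stated in the text, one verifies that this isomorphism carries the subalgebra $\mathcal{B}^0_+$ defined via \eqref{intro:g0+} onto $\mathcal{VL}'_+=\sum_{m_1\ge -1,m_2\in\Z}\C L'_{m_1,m_2}$, so the induced modules are canonically identified and the Lie algebra isomorphism lifts to a vertex algebra isomorphism $V_{\mathcal{B}^0}\cong V_{\mathcal{VL}'}$ matching $B_m^{1,0}\leftrightarrow L'_m$.

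To pin down the transported $R_{-1}$ on the generator $L'_m$, I would compute in $V_{\mathcal{B}^0}$ as follows: $R_{-1}(B_m^{1,0})=B_m^{-1,0}=B_m^{-1,0}(-1)\bm{1}=-B_{-m}^{1,0}(-1)\bm{1}=-B_{-m}^{1,0}$, where the middle equality uses $B_m^{-1,0}(z)=-B_{-m}^{1,0}(z)$. Transporting along $B_m^{1,0}\leftrightarrow L'_m$ then gives $R_{-1}(L'_m)=-L'_{-m}$. The module correspondence is immediate from Theorem~\ref{thm:main2}(II) (since $\mathcal{B}$ is maximal), transported through the vertex algebra isomorphism, which by construction intertwines the $\Gamma_\sigma$-actions. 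No step here presents a genuine obstacle; the main bookkeeping is ensuring that the two Lie algebra identifications are composed in a way compatible with the generator matching $B_m^{1,0}\leftrightarrow L'_m$, so that the explicit formula for $R_{-1}$ comes out with the correct sign and index reflection.
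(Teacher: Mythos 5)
Your proposal is correct and follows essentially the same route as the paper: Section 4.5 establishes that $(\mathcal{B},\CA,1)$ is a maximal quasi vertex Lie algebra with group $\Gamma_\sigma=\{\pm1\}$, records the relations $\mb{c}^{\pm1,0}(z)=\mb{c}^{1,0}(-1)$ and $B_m^{-1,0}(z)=-B_{-m}^{1,0}(z)$, identifies $\mathcal{B}^0\cong\mathcal{VL}^0\cong\mathcal{VL}'$, and then invokes Theorem \ref{thm:main2}. Your explicit bookkeeping of the composite isomorphism $B_m^{1,0}(n)\mapsto L'_{n-1,m}$, the matching of $\mathcal{B}^0_+$ with $\mathcal{VL}'_+$, and the sign computation $R_{-1}(B_m^{1,0})=B_m^{-1,0}=-B_{-m}^{1,0}$ correctly fills in the details the paper leaves implicit.
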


\begin{remt} Recall that if $\g=\wh{\CL}(\mathfrak{b},\sigma)$ or $\wh{\fgl_\ell}(\C_Q)$,  we have $\g^\zeta\cong \g^0$ for any $\zeta\in \Z$.
 However, when $\g=\mathcal{VL}$ or $\mathcal{B}$, it is known that $\g^1\cong\mathcal{VL}$ which is not isomorphic to $\g^0
\cong\mathcal{VL}'$ (see \cite{DZ}).
\end{remt}

\begin{remt} We note that the Virasoro-like algebras and the Klein bottle Lie algebras  are not quasi vertex Lie algebras if we write the generating functions as $\sum_{n\in \Z} L_{n,m} z^{\epsilon-n-1}$
and $\sum_{n\in \Z} B_{n,m} z^{\epsilon-n-1}$  for $m\in \Z$ respectively, unless $\epsilon=1$.
The similar phenomenons appear in the generating functions of the (twisted)  toroidal extended affine Lie algebras (see \cite{CLT,CTY}).
\end{remt}

\end{document}